\numberwithin{equation}{section}
\renewcommand{\phi}{\varphi}
\theoremstyle{definition}
\newtheorem{Def}{Definition}[section]
\theoremstyle{plain}
\newtheorem{prop}[Def]{Proposition}
\newtheorem{cor}[Def]{Corollary}
\newtheorem{thm}[Def]{Theorem}
\newtheorem{lem}[Def]{Lemma}
\theoremstyle{remark}
\newtheorem{rem}[Def]{Remark}
\DeclarePairedDelimiter{\abs}{|}{|}
\DeclarePairedDelimiter{\norm}{\|}{\|}
\DeclareMathOperator{\divergence}{div}
\newcommand{\e}{\varepsilon}
\newcommand{\R}{\mathbb{R}}
\newcommand{\N}{\mathbb{N}}
\newcommand{\W}{\mathcal{W}}
\newcommand{\Lip}{\text{Lip}}
\newcommand{\loc}{\text{loc}}
\newcommand{\meas}{\mathcal{L}}
\newcommand{\weakstar}{\overset{*}{\rightharpoonup}}
\newcommand{\Z}{\mathbb{Z}}
\newcommand{\C}{\mathcal{C}}
\newcommand{\Span}{\text{Span}}
\DeclareMathOperator{\di}{d\!}
\renewcommand{\MR}[1]{} 
\newcommand{\marco}[1]{{\color{red} \textbf{M:} #1}}
\begin{document}

\title[Lagrangian solutions to the transport--Stokes system] {Lagrangian solutions to the transport--Stokes system}

\author[M.~Inversi]{Marco Inversi}
\address[M.~Inversi]{Department Mathematik und Informatik, Universität Basel, Spiegelgasse~1, 4051 Basel, Switzerland}
\email{marco.inversi@unibas.ch}

\keywords{Lagrangian solutions, regular Lagrangian flow, transport--Stokes system, Osgood condition, Yudovich class.}

\subjclass[2020]{Primary 35L65. Secondary 34A30.}


\thanks{
\textit{Acknowledgements}.
MI is partially funded by the SNF grant FLUTURA: Fluids, Turbulence, Advection No. 212573. The author thanks Amina Mecherbet, Franck Sueur and Gianluca Crippa for several fruitful discussions on the topic. 
}

\begin{abstract}
In this paper we consider the transport--Stokes system, which describes the sedimentation of particles in a viscous fluid in inertialess regime. We show existence of Lagrangian solutions to the Cauchy problem with $L^1$ initial data. We prove uniqueness of solutions as a corollary of a stability estimate with respect to the $1$-Wasserstein distance for solutions with initial data in a Yudovich-type refinement of $L^3$, with finite first moment. Moreover, we describe the evolution starting from axisymmetric initial data. Our approach is purely Lagrangian. 
\end{abstract}

\date{\today}

\maketitle

\section{Introduction}

For a fixed $T>0$, we consider the so-called transport--Stokes system, which describes the sedimentation of inertialess suspension in a viscous flow
\begin{equation} \label{transport-stokes}
\tag{$\mathbf{TS}$}
\begin{cases}
\partial_t \rho + u \cdot \nabla \rho = 0 & (t,x) \in (0, T) \times \R^3, 
\\ -\Delta u + \nabla \pi = -  e_3 \rho & (t,x) \in (0, T) \times \R^3, 
\\ \divergence{u} = 0 & (t,x) \in (0, T) \times \R^3,  
\\ \displaystyle \lim_{\abs{x}\to +\infty} \abs{u(t,x)} = 0 & t \in (0,T),
\\ \rho(0,x)= \rho_0(x) & x \in \R^3.
\end{cases}
\end{equation} 
We refer to \cites{H18, M19} for a derivation of \eqref{transport-stokes} as a model for the sedimentation of a system of rigid particles in a viscous fluid in the regime where inertia of both fluid and particles is neglected. We also mention the recent paper\cites{HS23} where the authors show that a system of solid particles very small inertia converges in the mean field limit to the transport--Stokes system. The system \eqref{transport-stokes} couples a transport equation and the steady Stokes equations in the full three-dimensional space. Indeed, the density of particles $\rho$ is advected by an incompressible velocity field that solves the steady Stokes equation. 

Existence and uniqueness of classical solutions with a regular initial datum has been established in \cite{H18} and recently extended in \cite{M21} to the case of initial data in $L^1 \cap L^\infty(\R^3)$ with finite first moment. A similar result without the moment assumption has been proved in a parallel contribution by \cite{HS21}. We also mention the paper \cite{L22}, where global existence and uniqueness for bounded initial density is established in the case of bounded domain in $\R^3$ and $\R^2$, as well as in the infinite strip $(0,1)\times \R$ with an additional flux condition. We refer to \cite{G22} for a result on global existence and uniqueness for density with compact support in $L^1 \cap L^\infty(\R^2)$. In this paper, the authors study also the propagation of the H\"older regularity of the boundary patch. In the recent paper \cite{MS22}, it is considered the transport--Stokes system in $L^1\cap L^p(\R^3)$ for $p \geq 3$. More precisely, the authors prove existence and uniqueness of distributional solutions (see \cite{MS22}*{Theorem 2.1}), stability of solutions with respect to $1$-Wasserstein distance (see \cite{MS22}*{Theorem 2.2}) and analiticity of the trajectories (see \cite{MS22}*{Theorem 2.3}). Finally, we mention the paper \cite{C23} where the fractional transport--Stokes system is considered. 

In this paper, exploiting a purely Lagrangian approach, we study well-posedness of the transport-Stokes system \eqref{transport-stokes} beyond the results recently proved by Mecherbet and Sueur in \cite{MS22}. In particular, we adapt the technique of \cites{BBC16, BBC16bis} (where the authors deal with the 2D Euler equation and the Vlasov--Poisson system, respectively) to show the existence of Lagrangian solutions with initial density in $L^1(\R^3)$. Moreover, following the approach in \cite{CS21} for the 2D Euler equation, we establish a stability estimate with respect to the 1-Wasserstein distance for Lagrangian solutions with initial density in suitable refinements of $L^3(\R^3)$. Among other things, we point out the connection between several notions of solutions to \eqref{transport-stokes}. Finally, we exploit the invariance of the system under rotations along the vertical axis to describe the evolution corresponding to an initial density with singularities localized on the vertical axis, in the spirit of the recent paper \cite{DEL22}. 

Throughout this note, we work in the space-time domain $[0,T]\times \R^d$, mainly in the three dimensional case. Unless otherwise specified, we denote by $\norm{\cdot}_{L^p_t L^q_x}$ the norm in $L^p([0,T]; L^q(\R^d))$. 

\section{Main results}

The transport--Stokes system \eqref{transport-stokes} couples a transport equation with the steady Stokes system. Recall that the velocity field $u$ can be expressed in terms of the density $\rho$ by 
\begin{equation}
u=E*(-e_3 \rho), \ \ \ E(x) = \frac{1}{8 \pi \abs{x}} \left( Id + \frac{x}{\abs{x}} \otimes \frac{x}{\abs{x}}\right), \label{kernel} 
\end{equation}
where $E$ is the so-called Oseen tensor, namely the Green function for the Stokes flow (see \cite{G11}*{Theorem IV.2.1}). Thus, the transport--Stokes system \eqref{transport-stokes} shares the nonlinear transport structure with several equations arising in fluid mechanics, such as 2D Euler in vorticity formulation and Vlasov--Poisson. Hence, it is natural to consider solutions that are advected along the trajectories of the flow generated by the velocity field given by the Stokes system. 

\begin{Def} [Lagrangian solution] \label{Lagrangian solution}
We say that $\rho \in L^\infty([0,T]; L^1(\R^3))$ is a Lagrangian solution to the transport--Stokes system \eqref{transport-stokes} with initial condition $\rho_0 \in L^1(\R^3)$ if the following property holds true for any $t \in [0,T]$: 
\begin{equation}
\rho(t, X(t,0,x)) = \rho_0(x) \ \ \ \text{for a.e.\ } x \in \R^3, \label{representation of solution}
\end{equation}
where $X\colon[0,T]^2 \times \R^d \to \R^d$ is a regular Lagrangian flow associated to the velocity field $u= E*(-e_3 \rho)$ according to \cref{regular lagrangian flow} and $E$ is the Oseen tensor \eqref{kernel}. 
\end{Def}

The notion of Lagrangian solution is classical in the theory of the linear transport equation \cites{AC08, AC14} and it allows to describe active scalar equations such as \eqref{transport-stokes}. We refer to \cref{s:regular lagrangian flow} for further discussions on the notion of regular Lagrangian flow and, in particular, to \cref{sparse comments} for some basic properties of Lagrangian solutions to \eqref{transport-stokes} according to \cref{Lagrangian solution}. Motivated by \cites{BBC16, BBC16bis}, where similar results are established for the 2D Euler equation and the Vlasov--Poisson system, respectively, we state our main result on existence of Lagrangian solution to \eqref{transport-stokes}. 

\begin{thm} [Existence of solutions] \label{existence of sols}
Given $\rho_0 \in L^1(\R^3)$, there exists a Lagrangian solution $\rho \in L^\infty([0,T]; L^1(\R^3))$ to \eqref{transport-stokes} according to \cref{Lagrangian solution} with initial condition $\rho_0$.
\end{thm}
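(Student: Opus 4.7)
The plan is to follow the Lagrangian compactness strategy of \cites{BBC16, BBC16bis}: regularize the initial datum, use the existing $L^1\cap L^\infty$ well-posedness to build approximating Lagrangian solutions, and pass to the limit via a compactness result for regular Lagrangian flows tailored to vector fields whose gradient is a singular integral of an $L^1$ function.

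Concretely, I would set $\rho_0^n := \eta^n * (\mathbf{1}_{B_n} \rho_0)$ for a standard mollifier $(\eta^n)$, so that $\rho_0^n \to \rho_0$ in $L^1(\R^3)$ with $\rho_0^n \in L^1 \cap L^\infty(\R^3)$ compactly supported (in particular with finite first moment). By \cites{M21,HS21} there is then, for each $n$, a unique Lagrangian solution $\rho^n \in L^\infty([0,T]; L^1 \cap L^\infty(\R^3))$ with smooth regular Lagrangian flow $X^n$ of $u^n := E*(-e_3 \rho^n)$ for which \eqref{representation of solution} holds pointwise. Since $\divergence u^n = 0$, the $L^1$ norm is transported: $\norm{\rho^n(t)}_{L^1} \leq \norm{\rho_0}_{L^1}$ uniformly in $n$ and $t$. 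Calderón-Zygmund theory applied to the Oseen tensor \eqref{kernel} then provides a uniform bound $\nabla u^n \in L^\infty([0,T]; L^{1,\infty}(\R^3))$, together with $u^n \in L^\infty([0,T]; L^{3/2,\infty}(\R^3))$ by Young's inequality for Lorentz spaces.

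The core step is the compactness of the family $(X^n)$. Since $\nabla u^n$ sits at the $L^1 \to L^{1,\infty}$ endpoint of Calderón-Zygmund theory, the classical Crippa-De Lellis estimates (which require $\nabla u \in L^p$ for some $p>1$) do not directly apply. Instead, I would invoke the logarithmic-functional approach of \cites{BBC16, BBC16bis}: one controls in time a quantity of the form $\int_{B_R} \log(1+|X^n(t,0,x)-X^m(t,0,x)|/\delta)\,dx$ by exploiting the weak-$L^1$ bound for maximal functions of singular integrals, obtaining a Lusin-type estimate and hence precompactness of $(X^n)$ in $C([0,T]; L^0_{\loc}(\R^3))$. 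Up to a subsequence, $X^n \to X$ locally in measure, uniformly in time, and $X$ is itself a regular Lagrangian flow with compressibility constant $1$.

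To conclude, I would define $\rho(t,\cdot) := X(t,0,\cdot)_{\#} \rho_0 \in L^1(\R^3)$; the measure-preservation of $X$ as a limit of measure-preserving flows gives $\norm{\rho(t)}_{L^1} \leq \norm{\rho_0}_{L^1}$. Combining convergence of the flows in measure with $\rho_0^n \to \rho_0$ in $L^1$ yields $\rho^n \to \rho$ strongly in $C([0,T]; L^1_{\loc}(\R^3))$ and, by continuity of the convolution with the Oseen tensor, $u^n \to u := E*(-e_3 \rho)$. Hence $X$ is the regular Lagrangian flow of $u$, and the pointwise identity \eqref{representation of solution} passes to the limit, showing that $\rho$ is a Lagrangian solution in the sense of \cref{Lagrangian solution}. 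The main obstacle is precisely the flow compactness in the third step: the $L^1$-endpoint singular-integral setting is exactly what rules out a direct application of the classical theory and forces the use of the machinery of \cites{BBC16, BBC16bis}, carefully transposed to the matrix-valued 3D Oseen kernel.
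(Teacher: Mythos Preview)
Your overall strategy---regularize the datum, solve the approximate problems, extract compactness at the level of the flows, and pass to the limit---is exactly the paper's. The difference lies in your regularity analysis of $u^n$, which leads you to propose heavier machinery than is actually needed.

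The Oseen tensor satisfies $|E(x)|\lesssim |x|^{-1}$ and $|\nabla E(x)|\lesssim |x|^{-2}$ in $\R^3$; it is $\nabla^2 E$, not $\nabla E$, that is a Calder\'on--Zygmund kernel. Thus for $\rho\in L^1$ one has $u\in L^{3,\infty}$ and $\nabla u\in L^{3/2,\infty}$, not $u\in L^{3/2,\infty}$ and $\nabla u\in L^{1,\infty}$ as you write. More usefully, the paper's \cref{summability of velocity} splits $E=E_1+E_2$ with $E_1=E\eta$ compactly supported and $E_2$ smooth away from the origin; then $\nabla E_1\in L^p(\R^3)$ for every $p\in[1,3/2)$ and $\nabla E_2\in L^\infty(\R^3)$, so Young's inequality gives $u^n=u^n_1+u^n_2$ with $u^n_1\in L^\infty_t W^{1,p}_x$ for any fixed $p\in(1,3/2)$ and $u^n_2\in L^\infty_t W^{1,\infty}_x$, uniformly in $n$.

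This puts the problem squarely inside the classical Crippa--De Lellis $W^{1,p}$ framework \cite{CDL08}, with a harmless Lipschitz perturbation added on. The paper's compactness (\cref{quantitative RLF bis} and \cref{weakly convergent vector fields bis}) is precisely the standard logarithmic-functional estimate adapted to the sum $W^{1,p}+W^{1,\infty}$; the genuine $L^1$-endpoint singular-integral machinery of \cite{BC13} and \cites{BBC16,BBC16bis} is not invoked. Your proposed route via that machinery would still succeed (it covers a strictly harder class of vector fields), but the ``main obstacle'' you flag does not exist here: the Oseen kernel gains two derivatives rather than one, and that extra gain is exactly what makes transport--Stokes with $L^1$ density easier than 2D Euler with $L^1$ vorticity.
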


Exploiting the classical theory for the linear transport equation \cites{AC08, AC14}, we obtain the existence of distributional/renormalized solutions to \eqref{transport-stokes} (see the discussion in \cref{s:distr vs lagr}). 

\begin{cor} \label{distributionality}
Let $\rho_0 \in L^1(\R^3)$ and let $\rho \in L^\infty([0,T];L^1(\R^3))$ be any Lagrangian solution to \eqref{transport-stokes} with initial datum $\rho_0$ according \cref{Lagrangian solution}. Then, $\rho$ is a renormalized solution according to \cref{renormalized solution}. If $\rho_0 \in L^1 \cap L^p(\R^3)$ for some $p \geq 6/5$ and $\rho \in L^\infty([0,T];L^1 \cap L^p(\R^3))$ is a renormalized solution to \eqref{transport-stokes} with initial datum $\rho_0$ according to \cref{renormalized solution}, then $\rho$ is a distributional solution according to \cref{weak solution}. If $\rho_0 \in L^1 \cap L^p(\R^3)$ for some $p \geq 3/2$ and $\rho \in L^\infty([0,T];L^1 \cap L^p(\R^3))$ is a distributional solution to \eqref{transport-stokes} with initial condition $\rho_0$ according to \cref{weak solution}, then $\rho$ is a Lagrangian and renormalized solution according to \cref{Lagrangian solution} and \cref{renormalized solution}. 
\end{cor}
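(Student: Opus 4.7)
My plan is to treat the three implications separately via the classical DiPerna--Lions--Ambrosio theory of the linear continuity equation, combined with the smoothing properties of the Oseen kernel \eqref{kernel}. Throughout, $u=E*(-e_3\rho)$ is the associated divergence-free velocity field. Up to constants $E$ is an order-$2$ Riesz potential and $\nabla E$ an order-$1$ Riesz potential, so Hardy--Littlewood--Sobolev gives $\norm{u}_{L^q}\lesssim\norm{\rho}_{L^r}$ with $1/q=1/r-2/3$ and $\norm{\nabla u}_{L^{\tilde q}}\lesssim\norm{\rho}_{L^r}$ with $1/\tilde q=1/r-1/3$, while Calder\'on--Zygmund yields $\norm{\nabla^2 u}_{L^r}\lesssim\norm{\rho}_{L^r}$ for $1<r<\infty$. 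These estimates single out the two thresholds $6/5$ and $3/2$.

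\emph{Lagrangian $\Rightarrow$ renormalized.} Since $\divergence u=0$, the RLF $X(t,0,\cdot)$ preserves the Lebesgue measure. Fix $\beta\in C^1\cap L^\infty(\R)$ with $\beta(0)=0$ and $\varphi\in C^\infty_c([0,T)\times\R^3)$. Differentiating $t\mapsto\int_{\R^3}\beta(\rho_0(x))\varphi(t,X(t,0,x))\,dx$, using $\partial_tX=u(t,X)$, and then changing variables through $y=X(t,0,x)$, yields
\begin{equation*}
\int_0^T\!\!\int_{\R^3}\beta(\rho)(\partial_t\varphi+u\cdot\nabla\varphi)\,dy\,dt+\int_{\R^3}\beta(\rho_0)\varphi(0,\cdot)\,dx=0,
\end{equation*}
which is precisely the renormalized formulation. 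Local integrability of $u\,\beta(\rho)$ follows from $u\in L^\infty_tL^{3/2,\infty}_{\loc}$ (since $\rho\in L^\infty_tL^1$) and boundedness of $\beta$.

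\emph{Renormalized with $p\geq 6/5\Rightarrow$ distributional.} Pick $\beta_n\in C^1_b(\R)$ with $\beta_n'\to 1$, $\beta_n(s)\to s$ pointwise and $|\beta_n(s)|\leq|s|$. The renormalized identity
\begin{equation*}
\partial_t\beta_n(\rho)+\divergence(u\,\beta_n(\rho))=0
\end{equation*}
can be passed to the limit as soon as $u\rho\in L^\infty_tL^1_x$. Interpolation between $L^1$ and $L^p$, $p\geq 6/5$, puts $\rho$ in $L^\infty_tL^{6/5}_x$; HLS then gives $u\in L^\infty_tL^6_x$; H\"older yields $\norm{u\rho}_{L^1_x}\leq\norm{u}_{L^6_x}\norm{\rho}_{L^{6/5}_x}$ uniformly in time; dominated convergence concludes.

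\emph{Distributional with $p\geq 3/2\Rightarrow$ Lagrangian and renormalized.} The HLS estimate places $\nabla u\in L^\infty_tL^{q}_x$ with $q=3p/(3-p)\in[3,\infty)$ when $p\in[3/2,3)$, and gives even stronger control for $p\geq 3$ through Calder\'on--Zygmund and Sobolev embedding. The conjugate exponent $q'=3p/(4p-3)$ satisfies $q'\leq p$ precisely when $p\geq 3/2$, so $\rho\in L^\infty_tL^{q'}_x$ by interpolation with $L^1$. In this regime the Ambrosio--DiPerna--Lions commutator estimate applies: any $L^\infty_tL^p_x$ distributional solution of the continuity equation with $u\in L^1_tW^{1,q}_{\loc}$ and $\divergence u=0$ is renormalized. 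The same Sobolev regularity of $u$ ensures existence and uniqueness of the RLF $X$ for $u$; combining the renormalization with the pushforward representation of solutions to the continuity equation along $X$ yields $\rho(t,X(t,0,x))=\rho_0(x)$ for a.e.\ $x$, so $\rho$ is Lagrangian in the sense of \cref{Lagrangian solution}.

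The main obstacle is the sharp identification of the thresholds $p\geq 6/5$ and $p\geq 3/2$, both dictated by the smoothing properties of the Oseen kernel and the requirement that $u$, respectively $\nabla u$, sit in spaces H\"older-conjugate to those of $\rho$; once these are secured, every step reduces to well-known arguments of the linear theory and involves no nonlinear difficulty.
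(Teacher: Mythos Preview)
Your argument is correct and follows essentially the same route as the paper: the Lagrangian-to-renormalized step via change of variables along the measure-preserving flow, the renormalized-to-distributional step by truncation $\beta_n(s)\to s$ and dominated convergence using $u\rho\in L^1_{\loc}$, and the distributional-to-Lagrangian step by invoking DiPerna--Lions uniqueness once $\nabla u$ and $\rho$ are H\"older conjugate. Two minor remarks: the restriction $\beta(0)=0$ in the first step is unnecessary (the constant part is handled by $\divergence u=0$), and in the same step the Lorentz exponent should be $L^{3,\infty}$ rather than $L^{3/2,\infty}$, since $E\sim|x|^{-1}$ in $\R^3$; neither affects the validity of the argument.
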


Following \cite{CS21}, we state a stability (thus uniqueness) result for Lagrangian solutions to \eqref{transport-stokes} in a Yudovich-type refinement of $L^3$. We refer to \cref{s:nonlinear superposition} for some comments on the method and the assumptions, as well as the precise definitions for the spaces and the functions involved in the statement.  

\begin{thm} [Stability estimate] \label{estimate on wasserstein}
Let $\Theta\colon [1, +\infty) \to [1, +\infty)$ be a non decreasing function. Let $\rho^1, \rho^2$ be two Lagrangian solutions to \eqref{transport-stokes} according to \cref{Lagrangian solution} with nonnegative initial conditions $ \rho_0^1, \rho_0^2$, respectively. Assume that  
\begin{equation}
    \norm{\rho_0^1}_{L^1_x} = \norm{\rho_0^2}_{L^1_x}, \label{same norm}
\end{equation}
\begin{equation}
    \int_{\R^3} \abs{x} \rho_0^1(x)\di x  + \int_{\R^3} \abs{x} \rho_0^2(x) \di x  < +\infty, \label{finite first moment}
\end{equation}
\begin{equation}
   \rho_0^1, \rho_0^2 \in L^\Theta(\R^3), \label{a priori growth} 
\end{equation}
\begin{equation}
    \omega_\Theta \text{ is concave,} \label{convexity}
\end{equation}
where $L^\Theta$ is the space in \cref{space theta} and $\omega_\Theta$ is given by \eqref{modulus of continuity}. If $\omega_\Theta$ satisfies the Osgood condition \eqref{osgood}, then there exists a function $\Gamma\colon [0,T] \times [0, +\infty)^3 \to [0, +\infty)$ depending only on $T, \Theta$ with the following properties: 
\begin{itemize}
    \item $\Gamma$ is non decreasing with respect to any variable; 
    \item $\Gamma (\cdot, \sigma, \cdot, \cdot)\to 0$ uniformly on compact sets of $[0,T] \times [0, +\infty)^2$ as $\abs{\sigma} \to 0$;
    \item there holds that 
    \begin{equation}
        \W_1(\rho^1(t),\rho^2(t)) \leq \Gamma (t, \W_1(\rho_0^1, \rho_0^2), \norm{\rho_0^1}_{L^\Theta_x}, \norm{\rho_0^2}_{L^\Theta_x}) \ \ \ \forall t \in [0,T]. \label{dis bella}
    \end{equation}
    where $\W_1$ denotes the $1$-Wasserstein distance (see \cref{s:wasserstein}). 
    \end{itemize}   
In particular, if $\rho_0^1 = \rho_0^2$, then for any $t \in [0,T]$ we have that $\rho^1(t, \cdot)=\rho^2(t, \cdot)$ for a.e.\ $x$ in $\R^3$.
\end{thm}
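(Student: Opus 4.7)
The plan is to adapt the Lagrangian transport-cost argument of \cite{CS21} to the transport--Stokes setting. Pick an optimal coupling $\gamma_0$ between $\rho_0^1$ and $\rho_0^2$ for $\W_1$, and let $X^1, X^2$ denote the regular Lagrangian flows of $u^i := E*(-e_3 \rho^i)$. Set
\begin{equation*}
    Q(t) := \int_{\R^3 \times \R^3} \abs{X^1(t,0,x) - X^2(t,0,y)} \di \gamma_0(x,y).
\end{equation*}
By the Lagrangian representation \eqref{representation of solution}, the pushforward of $\gamma_0$ by $(X^1(t,0,\cdot), X^2(t,0,\cdot))$ is an admissible coupling of $\rho^1(t)$ and $\rho^2(t)$; hence $\W_1(\rho^1(t), \rho^2(t)) \leq Q(t)$ and $Q(0) = \W_1(\rho_0^1, \rho_0^2)$. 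The whole task reduces to closing an Osgood-type differential inequality on $Q$.

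Differentiating in $t$ and splitting across the two flows yields $\dot{Q}(t) \leq I_1(t) + I_2(t)$, where
\begin{equation*}
    I_1(t) := \int \abs{u^1(t,X^1(t,0,x)) - u^1(t,X^2(t,0,y))}\di\gamma_0(x,y), \qquad I_2(t) := \int_{\R^3} \abs{u^1(t,z) - u^2(t,z)}\, \rho^2(t,z) \di z,
\end{equation*}
after pushing the second integrand forward via $X^2$. The first term is controlled by the Yudovich-type regularity of $u^1$: the $L^\Theta$ bound \eqref{a priori growth} is propagated by the measure-preserving Lagrangian flow, and, together with \eqref{same norm} and \eqref{finite first moment} (the latter propagated thanks to the a priori $L^\infty_{t,x}$ bound for $u^1$ coming from \eqref{kernel}), produces a modulus of continuity $\abs{u^1(t,a) - u^1(t,b)} \leq C\,\omega_\Theta(\abs{a-b})$ with $C$ depending only on $\Theta, T$ and the data. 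Concavity \eqref{convexity} of $\omega_\Theta$ and Jensen's inequality then give $I_1(t) \leq C\, \norm{\rho_0^1}_{L^1_x}\, \omega_\Theta\!\bigl(Q(t)/\norm{\rho_0^1}_{L^1_x}\bigr)$.

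The delicate step is $I_2$, which must be bounded in terms of $\W_1(\rho^1(t), \rho^2(t)) \leq Q(t)$. I would split the Oseen kernel as $E = E\,\mathbbm{1}_{B_r} + E\,\mathbbm{1}_{B_r^c}$: on the far field, $E\,\mathbbm{1}_{B_r^c}$ is Lipschitz with constant $O(r^{-2})$, so the Kantorovich--Rubinstein duality gives the pointwise estimate
\begin{equation*}
    \abs{\bigl(E\,\mathbbm{1}_{B_r^c}\bigr) * \bigl(\rho^1(t) - \rho^2(t)\bigr)(z)} \leq C\, r^{-2}\, \W_1(\rho^1(t), \rho^2(t)) \leq C\, r^{-2}\, Q(t);
\end{equation*}
on the near field, $\abs{E} \in L^{3/2,\infty}$ locally in $B_r$, which combined with the $L^\Theta$ control of $\rho^i(t)$ yields a bound $C\, \sigma(r)$ with $\sigma(r) \to 0$ as $r \to 0$, expressed through the defining function of the Yudovich-type space $L^\Theta$. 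Integrating against $\rho^2(t)$ (finite mass) and optimizing $r$ as a function of $Q(t)$ produces $I_2(t) \leq C\, \widetilde\omega(Q(t))$ with $\widetilde\omega$ concave and inheriting the Osgood condition \eqref{osgood} from $\omega_\Theta$.

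Combining the two bounds yields $\dot Q(t) \leq \Omega(Q(t))$ for some concave Osgood modulus $\Omega$; the desired $\Gamma$ is the maximal comparison solution of this ODE with datum $\W_1(\rho_0^1, \rho_0^2)$, and its required monotonicity together with the uniform convergence $\Gamma(\cdot, \sigma, \cdot, \cdot) \to 0$ on compact sets of $[0,T]\times[0,+\infty)^2$ as $\sigma \to 0$ follow from the classical Osgood lemma, which also gives $\Gamma(t, 0, \cdot, \cdot) = 0$ and hence uniqueness, yielding \eqref{dis bella}. The main obstacle is the third step: quantifying $u^1 - u^2$ in terms of $\W_1$ compatibly with the Yudovich refinement, so that the resulting $\Omega$ remains Osgood; once this kernel splitting and $r$-optimization are carried out carefully, the rest is standard Lagrangian machinery.
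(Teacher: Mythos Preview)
Your overall framework---coupling the two initial data, defining the Lagrangian cost $Q(t)$, bounding $\W_1(\rho^1(t),\rho^2(t))\le Q(t)$, and handling $I_1$ by the $C^{\omega_\Theta}$ continuity of $u^1$ together with Jensen and \eqref{convexity}---is correct and is exactly what the paper does (the paper uses an optimal \emph{map} rather than a plan, but this is immaterial).

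The gap is in your treatment of $I_2$. The Kantorovich--Rubinstein splitting you propose does \emph{not} produce an Osgood modulus. Take the borderline case $\Theta$ bounded, i.e.\ $\rho^i\in L^1\cap L^3$, where $\omega_\Theta(s)=s(1-\log s)$. Your far-field piece gives the pointwise bound $Cr^{-2}Q(t)$; your near-field piece, by H\"older with exponent $p\in(3/2,3)$, gives $\|E\|_{L^{p'}(B_r)}\|\rho^i\|_{L^p}\le C\,r^{2-3/p}$ with a constant that stays bounded as $p\uparrow 3$ (since then $p'\downarrow 3/2$ and $3-p'$ is bounded away from $0$). Optimising in $r$ and $p$ yields at best $I_2\le C\,Q(t)^{1/3}$, and $\int_0^1 s^{-1/3}\,ds<\infty$, so the resulting $\widetilde\omega$ fails \eqref{osgood}. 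The same obstruction persists for general unbounded $\Theta$: the only way the $(3-p)^{-1}$ blow-up (which is precisely what generates the logarithmic factor in $\omega_\Theta$) enters is through the \emph{translation} estimate $\|\tau_hE-E\|_{L^{p'}}$, and your near/far splitting never sees it.

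The paper avoids duality altogether for $I_2$. It writes both densities as push-forwards of the \emph{same} reference measure $\rho_0^1$ through the coupling, namely $\rho^1(s)=X^1(s,0,\cdot)_\#\rho_0^1$ and $\rho^2(s)=(X^2(s,0,\cdot)\circ T)_\#\rho_0^1$, so that
\[
u^1(s,z)-u^2(s,z)=-\int_{\R^3}\bigl[E(z-X^1(s,0,y))-E(z-X^2(s,0,T(y)))\bigr]e_3\,\rho_0^1(y)\,dy.
\]
After integrating $|u^1-u^2|$ against $\rho^2(s,\cdot)$ and applying Fubini, the inner integral is exactly $\int_{\R^3}|E(a-z)-E(b-z)|\,\rho^2(s,z)\,dz$ with $a=X^1(s,0,y)$, $b=X^2(s,0,T(y))$, which by the \emph{strong} kernel inequality \eqref{increment} (Proposition~\ref{regularity of velocity}) is $\le C\|\rho_0^2\|_{L^\Theta}\,\omega_\Theta(|a-b|)$. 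Jensen then gives $I_2\le C\,\omega_\Theta(Q(s))$, the same modulus as $I_1$, and the Osgood lemma closes. The point the paper stresses (see Remarks~\ref{continuity by CZ} and the remark after Proposition~\ref{estimate for the relative distance}) is that one genuinely needs the inequality with the absolute value \emph{inside} the integral, not merely the modulus of continuity of $E*\Phi$; your $I_2$ strategy uses neither this nor the Lagrangian representation of $\rho^1-\rho^2$, and that is why it cannot recover an Osgood bound.
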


\begin{rem}
The interested reader can reconstruct the explicit formula for the function $\Gamma$ in \eqref{dis bella} by the proof of \cref{RLF for osgood} (see \eqref{explicit gamma} and \eqref{explicit gamma appendix}).
\end{rem}

In a well posedness regime (such as under the assumptions of \cref{estimate on wasserstein}), we exploit the structure of the equation to show that the cylindrical symmetry is preserved along the time evolution. In this symmetry regime, the vertical axis is invariant under the flow, namely the velocity field points downward on the vertical axis. In particular, we prove that, if the singularity at the initial time is on the vertical axis, the same property holds true for any positive time and the corresponding solution is bounded away from the vertical axis. We refer to \cite{DEL22}*{Theorem 3} for a similar result for the 2D Euler equation. We adopt the notation of \cref{cylinders and rotation}. 

\begin{thm} [Rotational invariant solutions] \label{symmetry regime}
Let $\Theta\colon[1,3) \to [1, +\infty)$ be a growth function such that $\omega_\Theta$ defined by \eqref{modulus of continuity} satisfies \eqref{convexity} and \eqref{osgood}. Given $\rho_0 \in L^\Theta(\R^3)$ such that \eqref{finite first moment} is satisfied, let $\rho \in L^\infty([0,T];L^\Theta(\R^3))$ be the unique Lagrangian solution to \eqref{transport-stokes} with initial condition $\rho_0$. Given an angle $\theta \neq 2k\pi$ for any $k \in \Z$, assume that $\rho_0$ is invariant under the rotation $R_\theta$ introduced in \cref{cylinders and rotation}. Then, the same holds for $\rho(t)$ for any $t \in [0,T]$. In particular, if $\rho_0$ has cylindrical symmetry, then the same holds for $\rho(t, \cdot)$ for any $t \in [0,T]$. Moreover, letting $\C_\delta$ be the cylinder introduced in \cref{cylinders and rotation}, if $\rho_0 \in L^p(\C_\delta^c)$ for some $\delta>0$ and $p \in [1, +\infty]$, then there exists $\e>0$ such that $\rho \in L^\infty([0,T]; L^p(\C_\e^c))$. 
\end{thm}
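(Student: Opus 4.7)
The argument proceeds in three stages: propagate the $R_\theta$-invariance from $\rho_0$ to $\rho(t)$ via a symmetry/uniqueness argument; deduce that the vertical axis is invariant under the flow; and, using the Osgood modulus of continuity of $u$ together with this invariance, control the horizontal displacement of trajectories and transfer the $L^p$ bound by incompressibility.

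\emph{Propagation of symmetry.} The Oseen tensor \eqref{kernel} is equivariant under every rotation, $E(Ry) = R E(y) R^T$, and since $R_\theta e_3 = e_3$, a change of variables in the convolution gives $u(t, R_\theta y) = R_\theta u(t, y)$ whenever $\rho(t)$ is $R_\theta$-invariant. Given a Lagrangian solution $(\rho, X)$ with $R_\theta$-invariant datum $\rho_0$, I set $\tilde \rho(t, y) := \rho(t, R_\theta y)$ and $\tilde X(t, 0, x) := R_\theta^{-1} X(t, 0, R_\theta x)$. Using the equivariance identity and $R_\theta^T e_3 = e_3$, a direct computation shows that $\tilde X$ is the regular Lagrangian flow of $\tilde u := E*(-e_3 \tilde \rho)$ (the incompressibility bound for $\tilde X$ transfers because $R_\theta$ preserves Lebesgue measure). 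Combined with $\rho_0 \circ R_\theta = \rho_0$ and \eqref{representation of solution}, this yields $\tilde \rho(t, \tilde X(t, 0, x)) = \rho_0(x)$, so $(\tilde \rho, \tilde X)$ is a Lagrangian solution to \eqref{transport-stokes} with initial datum $\rho_0$. The uniqueness part of \cref{estimate on wasserstein} then forces $\tilde \rho = \rho$, i.e., $\rho(t)$ is $R_\theta$-invariant. Cylindrical symmetry is the special case of invariance under every $\theta$.

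\emph{Invariance of the axis and horizontal displacement.} If $y$ lies on the vertical axis, then $R_\theta y = y$, so $u(t, y) = R_\theta u(t, y)$. Since $\theta \notin 2\pi \Z$, the fixed vectors of $R_\theta$ are exactly the multiples of $e_3$, and therefore $u_h(t, \cdot) \equiv 0$ on the axis. The Yudovich-type estimates underlying \cref{estimate on wasserstein} give that $u(t, \cdot)$ admits a modulus of continuity $C \omega_\Theta$, with $C$ depending on $T$, $\Theta$ and the Yudovich norm of $\rho_0$. Hence, writing $|y_h|$ for the distance of $y$ from the axis, $|u_h(t, y)| \leq C \omega_\Theta(|y_h|)$. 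The horizontal displacement $r(t) := |X_h(t, 0, x)|$ then satisfies $r'(t) \leq C \omega_\Theta(r(t))$, and comparison with the Osgood ODE $R' = C \omega_\Theta(R)$, $R(0) = \delta$, together with \eqref{osgood}, produces $|X_h(t, 0, x)| \leq \e$ for every $t \in [0,T]$ and every $x$ with $|x_h| \leq \delta$, where $\e := R(T) > 0$. Since $\divergence u = 0$, the RLF is Lebesgue measure preserving, and the change of variables $y = X(t, 0, x)$ in \eqref{representation of solution}, combined with the inclusion $X(t, 0, \cdot)^{-1}(\C_\e^c) \subseteq \C_\delta^c$, gives $\|\rho(t)\|_{L^p(\C_\e^c)} \leq \|\rho_0\|_{L^p(\C_\delta^c)}$ for every $p \in [1, +\infty]$.

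The delicate step is the Lagrangian-level verification in the propagation stage, namely that $\tilde X$ really is the RLF of the correct velocity field $\tilde u = E*(-e_3 \tilde \rho)$; once this is in hand, uniqueness from \cref{estimate on wasserstein} does the heavy lifting, and the analysis of the axis followed by the Osgood comparison for the horizontal displacement is routine.
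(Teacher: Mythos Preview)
Your proof is correct, and the symmetry--propagation stage matches the paper's argument essentially verbatim: one defines $\tilde\rho(t,\cdot)=\rho(t,R_\theta\cdot)$, checks via the equivariance of the Oseen tensor that $\tilde X=R_\theta^{-1}X(\cdot,\cdot,R_\theta\cdot)$ is the regular Lagrangian flow of $\tilde u=E*(-e_3\tilde\rho)$, and then invokes the uniqueness part of \cref{estimate on wasserstein}. One small expository point: your opening clause ``$u(t,R_\theta y)=R_\theta u(t,y)$ whenever $\rho(t)$ is $R_\theta$-invariant'' is true but not what you actually use for the RLF verification---there you need the unconditional identity $\tilde u=R_\theta^{-1}\,u(R_\theta\cdot)$, which follows from $E(Ry)=RE(y)R^T$ and $R_\theta e_3=e_3$ without any hypothesis on $\rho$. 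Keeping the unconditional identity for the RLF check and the conditional one for deducing $u_h=0$ on the axis would make the logic cleaner.

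For the cylinder part you take a genuinely different route from the paper. The paper first proves an auxiliary decay lemma ($|u(t,x)|\to 0$ uniformly in $t$ as $|x|\to\infty$), then argues by contradiction: assuming there are points arbitrarily close to the axis whose forward trajectories escape $\C_{\delta_0}$, the decay lemma forces the vertical coordinates of these points to stay bounded, so a subsequence converges to a point on the axis; the pointwise stability of the classical flow (from \cref{RLF for osgood}) then contradicts the invariance of the axis. Your argument is a direct forward estimate: from $u_h=0$ on the axis and the Osgood modulus $C\omega_\Theta$ of $u$ (this is \cref{regularity of velocity}), you obtain $|u_h(t,y)|\le C\omega_\Theta(|y_h|)$ and hence an Osgood differential inequality for $r(t)=|X_h(t,0,x)|$, giving the explicit bound $\e=R(T)$ with $R'=C\omega_\Theta(R)$, $R(0)=\delta$. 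Since $\omega_\Theta$ is constant on $[1,\infty)$, global existence of $R$ is automatic and the conclusion $X(t,0,\C_\delta)\subset\C_\e$ is immediate. Your approach is more quantitative and entirely bypasses the decay-at-infinity lemma; the paper's route, by contrast, highlights the role of flow stability and would transfer more readily to settings where a pointwise modulus for $u$ is not directly available.
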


Finally, we give examples of axisymmetric initial data in $L^1 \cap L^{3-}(\R^3)$ but not in $L^3(\R^3)$ such that any Lagrangian solution to \eqref{transport-stokes} produced by \cref{existence of sols} satisfies the assumptions of \cref{symmetry regime}, with $\Theta(s) = 1- \log(3-s)$.  

\begin{prop} \label{explicit example}
Fix $\Theta(s) = 1- \log(3-s)$. Then, the function $\omega_\Theta$ defined by \eqref{modulus of continuity} satisfies \eqref{convexity} and \eqref{osgood}. Let us set 
\begin{equation}
    \rho_0(x) = \frac{\mathbf{1}_{B_{1/e}}(x)}{\abs{x} \abs{\log(x)}^{\frac{1}{3}}} \ \ \ x \in \R^3,  \label{rho_0}
\end{equation}
Then, $\rho_0 \in L^\Theta$ and \eqref{finite first moment} is satisfied. Let us also set
\begin{equation}
   \rho_1(y) = \frac{\mathbf{1}_{B_{1/e}}(y)}{\abs{y}^{\frac{2}{3}} \abs{\log(\abs{y})}^{\frac{1}{3}}} \ \ \ y \in \R^2. \label{tilde rho_0}
\end{equation} 
Then, for any function $\rho_2 \in L^3(\R)$ with compact support, we have that $\Tilde{\rho}_0(x) = \rho_1(x_1, x_2) \rho_2(x_3) \in L^\Theta(\R^3)$ and \eqref{finite first moment} is satisfied.  
\end{prop}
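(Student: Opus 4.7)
The proposition bundles four independent verifications: (i) the concavity and Osgood properties of $\omega_\Theta$ for the specified $\Theta$, (ii) $\rho_0\in L^\Theta(\R^3)$, (iii) finite first moment of $\rho_0$ and $\Tilde\rho_0$, and (iv) the tensorised example $\Tilde\rho_0\in L^\Theta(\R^3)$. Each is reducible to an elementary computation once the definitions from (\ref{modulus of continuity}) and \cref{space theta} are in hand, so the plan is to dispose of them in order.

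For (i) I would first write $\omega_\Theta$ explicitly using the formula from (\ref{modulus of continuity}). With $\Theta(s)=1-\log(3-s)$ one has $\Theta(p)\sim\log(1/(3-p))$ as $p\to 3^-$, and optimising the exponent $p$ against a length scale $r$ (which is how $\omega_\Theta$ is built from $\Theta$ in the spirit of Yudovich / [CS21]) one expects $\omega_\Theta(r)\asymp r\log(1/r)\log\log(1/r)$ for small $r$. The Osgood condition then follows from the substitution $s=\log(1/r)$, which turns $\int_0 \frac{dr}{r\log(1/r)\log\log(1/r)}$ into $\int^{\infty}\frac{ds}{s\log s}=+\infty$; concavity near $0$ is checked by differentiating twice and noting that the dominant term in $\omega_\Theta''$ is $-r^{-1}\log\log(1/r)<0$ for small $r$, extending by monotonicity away from $0$.

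For (ii) and (iii), in spherical coordinates
\[
\|\rho_0\|_{L^p}^p = 4\pi\int_0^{1/e}\frac{r^{2-p}}{|\log r|^{p/3}}\di r,
\]
and the substitution $s=-\log r$ turns this into $4\pi\int_1^\infty e^{-(3-p)s}s^{-p/3}\di s$. For $p<3$ this is finite; splitting the integral at $s=1/(3-p)$ gives the crucial asymptotics $\|\rho_0\|_{L^p}^p\lesssim \log(1/(3-p))$ as $p\to 3^-$. Therefore $\|\rho_0\|_{L^p}/\Theta(p)\lesssim \log(1/(3-p))^{1/p-1}\to 0$, so $\rho_0\in L^\Theta$. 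The first moment $4\pi\int_0^{1/e} r^2|\log r|^{-1/3}\di r$ is manifestly finite.

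The tensorised case (iv) proceeds analogously. In 2D polar coordinates
\[
\|\rho_1\|_{L^p(\R^2)}^p = 2\pi\int_0^{1/e}\frac{r^{1-2p/3}}{|\log r|^{p/3}}\di r = 2\pi\int_1^\infty\frac{e^{-\frac{2(3-p)}{3}s}}{s^{p/3}}\di s,
\]
which displays the same $\log(1/(3-p))$ asymptotics. Since $\rho_2\in L^3(\R)$ has compact support, H\"older's inequality bounds $\|\rho_2\|_{L^p(\R)}$ uniformly for $p\in[1,3]$; the product structure $\|\Tilde\rho_0\|_{L^p(\R^3)}^p = \|\rho_1\|_{L^p(\R^2)}^p\,\|\rho_2\|_{L^p(\R)}^p$ then yields $\Tilde\rho_0\in L^\Theta$, and finite first moment follows from $|x|\leq|(x_1,x_2)|+|x_3|$ and Fubini, using the analogous 2D moment estimate for $\rho_1$. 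The only non-mechanical point in the whole argument is extracting the $\log(1/(3-p))$ divergence from $\int_1^\infty e^{-as}s^{-1}\di s$ as $a\to 0^+$ and matching it against the exact form of $\Theta$ — this, together with having the right explicit expression for $\omega_\Theta$ in step (i), is where all the quantitative content sits.
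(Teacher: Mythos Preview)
Your proposal is correct and follows essentially the same route as the paper. Both arguments compute $\omega_\Theta$ explicitly (the paper writes $\omega_\Theta(s)=s(1-\log s)(1+\log(1-\log s))$ and checks $\omega_\Theta''\leq 0$ directly), pass to spherical/polar coordinates, make the substitution $r=e^{-s}$ to reduce the $L^p$ norm to $\int_1^\infty e^{-(3-p)s}s^{-p/3}\di s$, and extract the $\log(1/(3-p))$ divergence by splitting; the paper does this via the further change $z=(3-p)s$ and a split at $z=1$, which is exactly your split at $s=1/(3-p)$ in the original variable. The tensorised example is handled identically.
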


\section{Background materials}

\subsection{The regular Lagrangian flow} \label{s:regular lagrangian flow}

The notion of regular Lagrangian flow extends that of classical flow generated by smooth vector fields. We refer to the seminal papers \cites{DPL89, A04}, where a suitable notion of flow is considered (see also \cites{A08, AC08, AC14}). Throughout this note, we define the regular Lagrangian flow for incompressible vector fields following mainly \cites{CDL08, BC13}. A similar definition is available for vector fields with bounded divergence. 

\begin{Def} [Regular Lagrangian flow] \label{regular lagrangian flow}
Let $u\colon [0,T] \times \R^d \to \R^d$ be a Borel vector field in $L^1_{loc}([0,T]\times \R^d)$ such that $\divergence{u} =0$. Let $X\colon[0,T]^2 \times \R^d \to \R^d$ be a Borel map. We say that $X$ is a (complete) regular Lagrangian flow associated to $u$ if the following properties hold true for any $s\in [0,T]$: 
\begin{enumerate}
\item for any $t \in [0,T]$ the map $x \mapsto X(t,s,x)$ is a measure preserving transformation of $\R^d$; 
\item for a.e.\ $x \in \R^d$ the map $t \mapsto u(t, X(t,s,x))$ is in $L^1((0,T))$; 
\item for a.e.\ $x \in \R^d$ the map $t \mapsto X(t,s,x)$ is an absolutely continuous curve in $\R^3$ such that 
\begin{equation}
X(t,s,x) = x + \int_s^t u(z, X(z,s,x))\di z \ \ \ \forall t \in [0,T]. \label{ODE}
\end{equation}
\end{enumerate}
Given $s \in [0,T]$, we say that the map $X(\cdot, s, \cdot)$ is a regular Lagrangian flow starting at time $s$. 
\end{Def}

\begin{rem}
The notion of regular Lagrangian flow in \cref{regular lagrangian flow} agrees with that of classical flow within the Cauchy--Lipschitz theory. Whenever the vector field $u$ is globally bounded and Lipschitz continuous, there exists a classical flow $X\colon[0,T]^2 \times \R^d \to \R^d$. We also recall that the semigroup property holds: 
\begin{equation}
    X(t,\tau,X(\tau,s,x)) = X(t,s,x) \ \ \ \forall x \in \R^d, \ t,\tau,s \in [0,T] \nonumber. 
\end{equation}
In particular, we have that $X(t,s, \cdot)^{-1} = X(s,t,\cdot)$ for any $s,t \in [0,T]$. Indeed, the semigroup property is not included in \cref{regular lagrangian flow}, but it can be restored if stability estimates are available (see the discussion in \cite{BC13}*{Section 7}). Moreover, if $u$ is divergence free, by the classical Liouville Theorem, $X(t,s, \cdot)$ is a measure preserving bilipschitz transformation of $\R^d$ for any $t,s \in [0,T]$. In \cref{s:classical flow} we discuss how to extend these classical results to the case of vector field with modulus of continuity that satisfies the Osgood condition \eqref{osgood}. 
\end{rem}

In \cites{CDL08} the authors show quantitative estimates for the regular Lagrangian flow generated by a $W^{1,p}$ vector field (at least for $p>1$), implying existence, uniqueness and stability of the flow for weakly differentiable vector fields. These results are classical in the case $p= \infty$, i.e.\ Lipschitz vector fields. In \cite{BC13} the authors extend these results to the case of a vector field whose derivative is given by the convolution with a singular integral operator, covering also the case of $W^{1,1}$ vector fields. In \cite{BBC16tris}, the authors provide a stability estimate for the regular Lagrangian flow associated to a velocity field with anisotropic regularity conditions. 

\begin{rem} \label{summability of velocity}
Throughout this paper, we split the Oseen tensor \eqref{kernel} by
\begin{equation}
    E_1 = E \, \eta \in L^p(\R^3) \ \ \ \forall p \in [1, 3), \ \ \ E_2 = E \, (1-\eta) \in L^\infty(\R^3), \nonumber
\end{equation}
where $\eta$ is a nonnegative smooth cut-off function supported in $B_2$ and such that $\eta(x) =1$ for $x \in B_1$. Since $E = E_1 + E_2$, for any $\Phi \in L^1(\R^3)$ the convolution $E * \Phi$ is well defined in $L^p + L^\infty(\R^3)$ for any $p \in [1,3)$. Since for any $i,j,k =1,2,3$ we have that 
$$ \partial_k E_{i,j}(x) = \frac{1}{8 \pi} \left( -\frac{\delta_{i,j} x_k}{\abs{x}^3} + \frac{(\delta_{i,k} x_j + \delta_{j,k} x_i) \abs{x}^3 - 3 \abs{x} x_i x_j x_k}{\abs{x}^6}\right),  $$
by the chain rule, it is immediate to see that $\nabla E_1 \in L^p(\R^3)$ for any $p \in [1, 3/2)$ and $\nabla E_1 \in L^\infty(\R^3)$. Therefore, if $\Phi \in L^1(\R^3)$, we obtain that $E_1* \Phi \in W^{1,p}(\R^3)$ for any $p \in [1,3/2)$ and $E_2* \Phi \in W^{1,\infty}(\R^3)$ and with the following bounds: 
\begin{equation} \label{equibound}
\begin{split}
    \norm{E_1 * \Phi}_{L^p_x} & \leq C_p \norm{\Phi}_{L^1_x} \ \ \ p \in \left[1, 3 \right), 
    \\ \norm{\nabla E_1 * \Phi}_{L^p_x} & \leq C_p \norm{\Phi}_{L^1_x} \ \ \ p \in \left[1, 3/2 \right),  
    \\ \norm{E_2* \Phi}_{W^{1, \infty}_x} & \leq C \norm{\Phi}_{L^1_x}. 
\end{split} 
\end{equation}
Similarly, if $\Phi \in L^q(\R^3)$, we have that 
\begin{equation} \label{equibound 2} 
\begin{split}
    \norm{E_1 * \Phi}_{L^p_x} & \leq C_{p,q,r} \norm{\Phi}_{L^q_x} \ \ \ r \in [1,3), \ \frac{1}{r}+ \frac{1}{q} = 1+\frac{1}{p}, 
    \\ \norm{\nabla E_1 * \Phi}_{L^p_x} & \leq C_{p,q,r} \norm{\Phi}_{L^q_x} \ \ \ r \in [1,3/2), \ \frac{1}{r}+ \frac{1}{q} = 1+\frac{1}{p}, 
\end{split}
\end{equation}
We also remark that the second derivative of $E$ is a Calderon--Zygmund operator, thus defining by convolution a continuous operator in $L^p(\R^3)$ for any $p \in (1, +\infty)$, that is 
\begin{equation}
\norm{\nabla^2 E*\Phi}_{L^p_x}\leq C_p \norm{\Phi}_{L^p_x} \ \ \ p \in (1, +\infty). \label{equibound 4}
\end{equation} 
\end{rem}

We recall the following definition. 

\begin{Def} [Convergence in $L^0_\loc$] \label{convergence in measure}
Let $\{\phi_n\}_n , \phi$ be Borel functions on $\R^d$. We say that $\phi_n \to \phi$ in $L^0_{\loc}(\R^d)$ (locally in measure) if  
\begin{equation}
    \lim_{n \to +\infty} \meas^d(\{x \in B_R \ \colon  \ \abs{\phi_n(x) - \phi(x)} \geq \e \}) = 0 \ \ \ \forall R>0 ,\e >0. \nonumber
\end{equation}
Let $\{\Phi_n\}_n, \Phi$ be a Borel functions in $C([0,T]; L^0_{\loc}(\R^d))$. We say that $\Phi_n \to \Phi$ in $C(L^0_\loc)$ (uniformly in time locally in measure) if 
\begin{equation}
    \lim_{n \to +\infty}\sup_{t \in [0,T]} \meas^d(\{x \in B_R \ \colon  \ \abs{\Phi_n(t,x) - \Phi(t,x)} \geq \e \}) = 0 \ \ \ \forall R>0 ,\e >0. \nonumber
\end{equation}
\end{Def}

\begin{rem}
We recall that Cauchy sequences in $L^0_\loc(\R^3)$ (in $C([0,T];L^0_\loc(\R^3))$) are sequentially compact in $L^0_\loc(\R^3)$ (in $C([0,T];L^0_\loc(\R^3))$, respectively). 
\end{rem}

Motivated by the \cref{summability of velocity}, we recall the following apriori estimate on the regular Lagrangian flow associated to a velocity field $u \in L^\infty([0,T];W^{1,\infty} + W^{1,p}(\R^3))$, for $p \in [1,3/2)$. The stability of the regular Lagrangian flow is pivotal to build Lagrangian solution to \eqref{transport-stokes} according to \cref{Lagrangian solution}. We postpone the proof of the following lemma to \cref{s:stability estimate}.

\begin{lem} \label{quantitative RLF bis}
Let $K>0$ be a given constant. Take $\rho^1, \rho^2 \in L^\infty([0,T]; L^1(\R^3))$ such that 
\begin{equation}
\norm{\rho^1}_{L^\infty_t L^1_x} + \norm{\rho^2}_{L^\infty_t L^1_x} \leq K. \label{a priori condition}
\end{equation}
For $i,j=1,2$, we define $u^i_j = E_j* (-e_3\rho^i)$, where $E$ is the Oseen tensor \eqref{kernel}, with the notation of \cref{summability of velocity}. Then, for $i=1,2$ there exists a unique regular Lagrangian flow $X^i \in C([0,T]^2; L^0_\loc(\R^3))$ generated by $u^i$ according to \cref{regular lagrangian flow}. The regular Lagrangian flow $X^i$ enjoys the semigroup property \eqref{semigroup}. Moreover, for any $R>0, \e>0, \eta>0$ there exist a constant $C_{K, R, \e, \eta,T } >0$ and a radius $\lambda_{K,R,\e,\eta, T}>0$ such that 
\begin{equation}  \label{stability of RLF} 
    \begin{split}
        \sup_{t,s \in [0,T]} & \meas^d(\{ x \in B_R \ \colon  \ \abs{ X^1(t,s,x) - X^2(t,s,x)}> \e \}) 
    \\ & \leq C \left[ \norm{u^1_1-u^2_1}_{L^1_t L^1_x} + \norm{u^1_2-u^2_2}_{L^1_t L^1( B_\lambda ) } \right] + \eta. 
    \end{split}
\end{equation}
\end{lem}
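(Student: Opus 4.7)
The strategy is an application of the quantitative Crippa--De Lellis framework \cite{CDL08}, adapted to the decomposition $u^i = u^i_1 + u^i_2$ from \cref{summability of velocity}. The bounds \eqref{equibound} yield $u^i_1 \in L^\infty_t W^{1,p}_x$ for any $p \in (1, 3/2)$ and $u^i_2 \in L^\infty_t W^{1,\infty}_x$, with norms controlled by $K$; in particular, the divergence--free vector field $u^i$ lies in $L^\infty_t(W^{1,p}_{\loc} + W^{1,\infty}_x)$ with $p>1$, so existence, uniqueness, and the semigroup property of the regular Lagrangian flow $X^i$ follow from \cite{CDL08, BC13} together with the discussion in \cite{BC13}*{Section 7}.

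For the stability estimate \eqref{stability of RLF}, we first localize the trajectories. Fixing $R, \e, \eta>0$, since $\norm{u^i_2}_{L^\infty_x}$ and $\norm{u^i_1}_{L^p_x}$ are bounded by $C_pK$ for some $p \in (1,3)$, the change of variables $y = X^i(\tau,s,x)$ (which is measure preserving) combined with Hölder's inequality gives
$$\int_{B_R} \sup_{\tau \in [0,T]} \abs{X^i(\tau,s,x) - x}\di x \leq \int_0^T \int_{B_R} \abs{u^i(\tau, X^i(\tau,s,x))}\di x\di \tau \leq C_{K,R,T}.$$
Markov's inequality then furnishes a radius $\lambda = \lambda_{K,R,\eta,T}$ such that the good set $G_\lambda := \{x \in B_R : \sup_{\tau \in [0,T]} \abs{X^i(\tau,s,x)} \leq \lambda \text{ for } i=1,2\}$ satisfies $\abs{B_R \setminus G_\lambda} \leq \eta/2$.

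On $G_\lambda$ we introduce the log--functional $\Phi_\delta(t) := \int_{G_\lambda} \log(1 + \abs{X^1(t,s,x) - X^2(t,s,x)}/\delta)\di x$, for $\delta>0$ to be chosen. Differentiating in $t$ and splitting $u^1(t,X^1) - u^2(t,X^2) = [u^1(t,X^1)-u^1(t,X^2)] + (u^1-u^2)(t,X^2)$, the first piece is handled by the pointwise Lusin--Sobolev inequality $\abs{u^1_1(y) - u^1_1(z)} \leq C\abs{y-z}(M\abs{\nabla u^1_1}(y) + M\abs{\nabla u^1_1}(z))$ and the Lipschitz bound for $u^1_2$; thanks to the $L^p$--boundedness of the maximal function ($p>1$), measure preservation, and Hölder's inequality, this piece contributes at most $C_{K,R,T}$ to $\Phi_\delta(t)$, uniformly in $\delta$. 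The second piece is bounded by $\delta^{-1}\abs{u^1 - u^2}(t,X^2)$; since $X^2(\tau,s,G_\lambda) \subset B_\lambda$ by construction, measure preservation yields a contribution at most $\delta^{-1}(\norm{u^1_1-u^2_1}_{L^1_tL^1_x} + \norm{u^1_2-u^2_2}_{L^1_tL^1(B_\lambda)})$. A final application of Chebyshev's inequality, followed by the choice $\delta = \e/(\exp(2C_{K,R,T}/\eta)-1)$ to absorb the ``deterministic'' contribution into $\eta/2$, yields \eqref{stability of RLF}. The main technical obstacle is the localization step: since $u^i$ is not globally bounded (only $u^i_2$ is), trajectories cannot be trapped in a fixed ball a priori, and the measure--preservation plus $L^p$--integrability trick above is precisely what permits the \emph{local} norm $\norm{u^1_2-u^2_2}_{L^1(B_\lambda)}$ (rather than a global $L^1$ norm) to appear on the right-hand side of \eqref{stability of RLF}.
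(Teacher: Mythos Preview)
Your argument is correct and follows the same Crippa--De Lellis strategy as the paper: a log-functional on the flow difference, the Lusin--Sobolev maximal inequality for the $W^{1,p}$ part, the Lipschitz bound for the smooth part, and a Chebyshev step. The paper packages this as a corollary of a general a~priori estimate (its \cref{quantitative RLF basic} and \cref{quantitative RLF}) together with \cref{well posedness of the flow}, whereas you reprove the relevant portion directly.

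The only genuine difference is the \emph{order} of localization. You first extract, via Markov applied to $\int_{B_R}\sup_\tau|X^i-x|\di x$, a good set $G_\lambda\subset B_R$ on which \emph{both} flows stay in $B_\lambda$, and then run the log-functional on $G_\lambda$. The paper instead defines $g_{\delta,R}$ on all of $B_R$, and localizes \emph{inside} the estimate: when bounding $\int_{B_R}|b^1_2-b^2_2|(X^2)\di x$, it splits $B_R$ according to whether $|X^2|\le\lambda$ and controls the superlevel set via its \cref{superlevel sets}, producing an extra term $\tfrac{\|b^1_2\|_{L^\infty}+\|b^2_2\|_{L^\infty}}{\lambda-R-\|b^2_2\|_{L^1_tL^\infty}}\|b^2_1\|_{L^1_tL^1}$ that is killed by choosing $\lambda$ large \emph{after} $\delta$. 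Your ordering ($\lambda$ first, then $\delta$) yields a $\lambda$ independent of $\e$, which is slightly sharper than the statement requires; the paper's ordering gives a $\lambda$ depending on $\delta$ (hence on $\e$). Both are equivalent for the purpose of the lemma. Note also that the paper only needs $X^2\in B_\lambda$ (since $(u^1-u^2)$ is evaluated at $X^2$), while you localize both flows; this is harmless overkill.
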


\subsection{The 1-Wasserstein distance} \label{s:wasserstein}

We recall some notions in the theory of Optimal Transport. 

\begin{Def} [Transport plan] \label{transport plan}
Given $\mu, \nu$ nonnegative Borel measures on $\R^d$, we set
$$\Gamma(\mu, \nu) = \{ \gamma \in \mathcal{M}_+(\R^d \times \R^d) \ \colon  \ (\pi_1)_\# \gamma = \mu, \ (\pi_2)_\# \gamma = \nu \}, $$
where $\mathcal{M}_+$ is the collection of nonnegative Borel measures and $\pi_1, \pi_2\colon \R^d\times \R^d \to \R^d$ are the projection maps onto the first and the last $d$ coordinates, respectively. We say that $\gamma \in \Gamma(\mu, \nu)$ is a transport plan between $\mu$ and $\nu$. 
\end{Def}

\begin{Def} [$1$-Wasserstein distance] \label{wasserstein 1}
Let $\mu, \nu$ be nonnegative Borel measures on $\R^d$ with the same mass and finite first moment, i.e.\ 
\begin{equation}
\int_{\R^d} \abs{x} \di  \mu(x) + \int_{\R^d} \abs{x} \di  \nu(x) < +\infty. \label{finite moment}
\end{equation}
We define the $1$-Wasserstein distance between $\mu$ and $\nu$ by 
\begin{equation}
    \W_1(\mu, \nu) = \inf_{\gamma \in \Gamma(\mu, \nu)} \int_{\R^d \times \R^d} \abs{x-y} \di  \gamma(x,y) <+\infty, \nonumber
\end{equation}
where $\Gamma(\mu, \nu)$ is the collection of transport plans between $\mu$ and $\nu$ as in \cref{transport plan}.
\end{Def}

\begin{rem}
It is a classical result in the theory of Optimal Transport that the infimum in \cref{wasserstein 1} is achieved by an optimal transport plan (see \cite{S15}). 
\end{rem}

\begin{Def} [Optimal map] \label{optimal map}
Let $\rho^1, \rho^2 \in L^1(\R^d)$ be nonnegative functions such that $\norm{\rho^1}_{L^1_x} = \norm{\rho^2}_{L^1_x}$ with finite first moment, namely \eqref{finite moment} holds true. We say that $T\colon \R^d \to \R^d$ is an optimal map if the following conditions are satisfied: 
\begin{equation}
T_\# \rho^1 = \rho^2, \label{push forward}
\end{equation}
\begin{equation}
\W_1(\rho^1, \rho^2) = \int_{\R^d} \abs{x-T(x)} \rho^1(x)\di x , \label{minimality}
\end{equation}
where $\W_1$ is the Wasserstein distance as in \cref{wasserstein 1}. 
\end{Def}

\begin{rem} \label{maps vs plans}
Given $\rho^1, \rho^2$ as in \cref{optimal map} and a Borel map $T\colon \R^d \to \R^d$ satisfying \eqref{push forward}, then $T$ defines a transport plan between $\rho^1$ and $\rho^2$ by taking $\gamma = (Id \times T)_\# (\rho^1 \meas^d)$. The existence of optimal maps as in \cref{optimal map} is a classical result in the theory of Optimal Transport (see the discussion in \cite{S15}*{Section 3.1} and the references therein).  
\end{rem}

\subsection{On the notion of Lagrangian solution} \label{sparse comments}

Let $\rho \in L^\infty([0,T];L^1(\R^3))$ be a Lagrangian solution to \eqref{transport-stokes} according to \cref{Lagrangian solution} with initial condition $\rho_0 \in L^1(\R^3)$. \cref{Lagrangian solution} has to be interpreted taking into account the following remarks. 

\begin{rem} \label{norms are preserved}
Since the flow map $X(t,0, \cdot)$ is measure preserving for any $t$, by the push-forward formula and \eqref{representation of solution}, for any Borel function $\phi\colon \R\to [0, +\infty)$ we have that
\begin{align}
\int_{\R^3} \phi(\abs{\rho(t,x)}) \di x  & = \int_{\R^3} \phi(\abs{\rho(t, X(t,0,x))} ) \di x  = \int_{\R^3 } \phi(\abs{\rho_0(x)}) \di x  \ \ \ \forall t \in [0,T]. \label{L^1 norm of f}
\end{align}
In other words, any rearrangement invariant norm is preserved along the time evolution. Indeed, this properties holds true for Lagrangian solution to the linear transport equation. Moreover, since \eqref{transport-stokes} is a model for sedimentation of particles and the unknown $\rho$ is a density of mass, it is physically relevant to consider nonnegative solutions. Indeed, taking $\phi(s) = \mathbf{1}_{(-\infty, 0)}(s)$ in \eqref{L^1 norm of f}, one can check that if $\rho_0$ is a.e.\ nonnegative, then for any $t \in [0,T]$ we have that $\rho(t,\cdot) \geq 0 $ for a.e.\ $x \in \R^3$.
\end{rem}

\begin{rem}
In \cref{Lagrangian solution} we have fixed Borel representatives of $u, X(\cdot, 0, \cdot)$. Hence, these maps are pointwise defined. Moreover, since $X(t, 0,\cdot)$ is measure preserving for any $t \in [0,T]$, it is easy to check that if we modify $\rho, X(\cdot, 0, \cdot)$ on a negligible set in $[0,T]\times \R^3$, then the composition in \eqref{representation of solution} is affected only on a negligible set. Moreover, we notice that for any $s \in [0,T]$, the map $t \mapsto u(t, X(t,s,x))$ is in $L^1((0,T))$ for a.e.\ $x \in \R^d$. Indeed, writing $u=u_1+u_2$, with $u_1 \in L^\infty([0,T];L^1(\R^3))$ and $u_2 \in L^\infty([0,T]; L^\infty(\R^3))$ as in \cref{summability of velocity} and since $X(t, s, \cdot)$ is measure preserving for any $t$, we have that
\begin{align}
\int_{\R^3}  \int_0^T \abs{ u_1(z, X(z,s,x))}\di z \di x    =  \int_{0}^T \int_{\R^3} \abs{u_1(z,x)}\di x  \di z = \norm{u_1}_{L^\infty_t L^1_x},  
\end{align} 
yielding that $u_1(\cdot, X(\cdot,s,x)) \in L^1((0,T))$ for a.e.\ $x \in \R^3$. Similarly, we have that
$$\norm{u_2(\cdot,X(\cdot, s, \cdot))}_{L^\infty_t L^\infty_x} \leq \norm{u_2}_{L^\infty_t L^\infty_x }.$$
Thus, for a.e.\ $x \in \R^3$, we have that $u_2(\cdot, X(\cdot,s, x))\in L^\infty((0,T))$. 
\end{rem}

\begin{rem} 
By \cref{quantitative RLF bis} the regular Lagrangian flow associated to the velocity field $u = E *(-e_3 \rho)$ satisfies the semigroup property \eqref{semigroup}. Therefore, \eqref{representation of solution} is equivalent to require that the following property holds for any $t \in [0,T]$:  
\begin{equation}
    \rho(t,x) = \rho_0(X(0,t,x)) \ \ \ \text{for a.e.\ } x \in \R^3. \label{representation of solution bis}
\end{equation}
\end{rem}

\section{Existence of Lagrangian solutions}

Our proof of \cref{existence of sols} is based on the same technique as \cites{BBC16, BBC16bis} for the 2D Euler equation and Vlasov--Poisson system, respectively. The strategy can be summarized as follows. We find a family of regular initial data that approaches the initial density. Then, exploiting the well-posedness of \eqref{transport-stokes} for regular initial data (see \cite{MS22}*{Theorem 2.1}), we may consider the family of solutions corresponding to the regularized initial data. Hence, using the stability property of the regular Lagrangian flow (see \cref{s:regular lagrangian flow}), we prove compactness at the level of the flows of the regular solutions and we produce a limiting flow advecting the limiting initial datum. 

\subsection{Building a Lagrangian solution}

The following result is a general statement within the Cauchy--Lipschitz theory, whose simple proof can be found in \cite{BBC16}*{Lemma 4.3}, for instance. We claim that weak convergence of vector fields gives strong convergence of the associated flows. 

\begin{lem} \label{weakly convergent vector fields}
Let $b^n, b$ be vector fields uniformly bounded in $L^\infty((0,T)\times \R^d)$ with $\nabla_x b^n, \nabla_x b$ uniformly bounded in $L^\infty((0,T)\times \R^d)$, such that $b^n \weakstar b $ in $L^\infty((0,T)\times \R)$. Letting $X^n,X\colon[0,T]^2 \times \R^d\to \R^d$ be the flows of $b^n, b$, respectively, then $X^n \to X$ in $C_\loc ([0,T]^2 \times \R^d)$. 
\end{lem}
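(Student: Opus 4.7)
The plan is a standard Arzelà--Ascoli argument combined with an identification of the limit via the integral formulation of the flow. The only delicate point will be passing to the limit in $\int_s^t b^n(z, X^n(z,s,x))\,dz$ under the sole assumption of weak-$*$ convergence.

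I would first extract uniform regularity of the family $\set{X^n}$. Set $M = \sup_n \norm{b^n}_{L^\infty((0,T)\times\R^d)}$ and $L = \sup_n \norm{\nabla_x b^n}_{L^\infty((0,T)\times\R^d)}$. The integral equation \eqref{ODE} gives $\abs{X^n(t,s,x) - x} \leq M\abs{t-s}$; Grönwall applied to $\abs{X^n(t,s,x) - X^n(t,s,y)}$ yields Lipschitz continuity in $x$ with constant $e^{LT}$; and the semigroup property combined with the bound in $t$ transfers Lipschitz continuity to the variable $s$. Hence $\set{X^n}$ is equi-Lipschitz on $[0,T]^2 \times \R^d$, and Arzelà--Ascoli produces a subsequence (not relabeled) with $X^n \to Y$ in $C_\loc([0,T]^2 \times \R^d)$.

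Next, I would pass to the limit in $X^n(t,s,x) = x + \int_s^t b^n(z, X^n(z,s,x))\,dz$ by splitting
\[
b^n(z, X^n(z,s,x)) = b^n(z, Y(z,s,x)) + \bigl[b^n(z, X^n(z,s,x)) - b^n(z, Y(z,s,x))\bigr].
\]
The bracket is controlled by $L\abs{X^n - Y}$ and so it vanishes uniformly on compact sets. For the remaining term $\int_s^t [b^n - b](z, Y(z,s,x))\,dz$, weak-$*$ $L^\infty$ convergence cannot be applied directly, since evaluation along a Lipschitz curve is a pairing with a zero-measure set. The remedy is to average: for $\epsilon>0$, the uniform spatial Lipschitz bound on both $b^n$ and $b$ gives, for every $z \in [s,t]$,
\[
\left| [b^n - b](z, Y(z,s,x)) - \int_{\R^d} [b^n - b](z, y)\, \frac{\mathbf{1}_{B_\epsilon(Y(z,s,x))}(y)}{\abs{B_\epsilon}}\,dy \right| \leq 2L\epsilon.
\]
The test function
\[
\psi_\epsilon(z,y) = \mathbf{1}_{[s,t]}(z)\, \frac{\mathbf{1}_{B_\epsilon(Y(z,s,x))}(y)}{\abs{B_\epsilon}}
\]
lies in $L^1((0,T)\times\R^d)$ with norm $\abs{t-s}$, so for fixed $\epsilon$ the quantity $\iint [b^n - b]\,\psi_\epsilon\,dy\,dz$ vanishes as $n \to \infty$ by weak-$*$ convergence; sending then $\epsilon \to 0$ yields the desired convergence of the integrals.

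Consequently $Y(t,s,x) = x + \int_s^t b(z, Y(z,s,x))\,dz$, and Cauchy--Lipschitz uniqueness for $b$ forces $Y = X$. Since the subsequential limit is unique, the whole sequence $X^n$ converges to $X$ in $C_\loc([0,T]^2 \times \R^d)$. The main obstacle is precisely the averaging step: weak-$*$ control of $b^n$ is too weak to evaluate along a single trajectory, and one must spend the Lipschitz-in-$x$ regularity to reduce the question to testing $b^n - b$ against a genuine $L^1$ function.
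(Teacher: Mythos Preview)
Your argument is correct and complete. The paper does not give its own proof of this lemma but refers to \cite{BBC16}*{Lemma 4.3}; your Arzel\`a--Ascoli extraction followed by identification of the limit through the integral ODE is precisely the standard route, and the averaging trick you use to pass the weak-$*$ limit through the composition with the trajectory is the correct way to handle the fact that evaluation along a curve is not an $L^1$ pairing.
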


With the help of the stability estimates of \cref{quantitative RLF bis}, \cref{weakly convergent vector fields} extends to vector fields with Sobolev regularity. The following result is similar to \cite{BBC16}*{Proposition 4.4}. 

\begin{prop} \label{weakly convergent vector fields bis}
Let $\{\rho^n\}_n$ be a sequence in $L^\infty([0,T]; L^1(\R^3))$ such that 
$$\sup_{n \in \N} \norm{\rho^n}_{L^\infty_t L^1_x} \leq K < +\infty.$$
Let $X^n$ be the regular Lagrangian flow associated to $u^n = E* (- e_3 \rho^n)$, where $E$ is the Oseen tensor \eqref{kernel}. Then, there exist a subsequence (not relabelled) and an incompressible vector field $u= u_1+u_2$ with the following properties: 
\begin{itemize}
\item $u_1 \in L^\infty([0,T]; W^{1,p}(\R^3))$ for $p \in [1, 3/2)$ and $u_2 \in L^\infty([0,T]; W^{1,\infty}(\R^3))$; 
\item $u^n_1 \weakstar u_1 $ in $L^\infty([0,T]; W^{1,p}(\R^3))$ for any $p \in [1, 3/2)$; 
\item $u^n_2 \weakstar u_2$ in $L^\infty([0,T]; W^{1,\infty}(\R^3))$.
\end{itemize}
Moreover, letting $X^n,X$ be the regular Lagrangian flows associated to $u^n,u$ respectively, then $X_n \to X$ in $C([0,T]^2; L^0_\loc(\R^3))$.  
\end{prop}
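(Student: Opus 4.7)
The proof splits into three steps: weak-$*$ compactness of the velocities, a Cauchy property of the associated flows obtained via \cref{quantitative RLF bis}, and identification of the limit.

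First, I would feed the uniform bound $\norm{\rho^n}_{L^\infty_t L^1_x} \leq K$ into the convolution estimates \eqref{equibound} to obtain that $u^n_1$ is uniformly bounded in $L^\infty([0,T]; W^{1,p}(\R^3))$ for every $p \in [1, 3/2)$ and $u^n_2$ uniformly bounded in $L^\infty([0,T]; W^{1,\infty}(\R^3))$. Each of these is the dual of a separable Banach space (namely $L^1(0,T; (W^{1,p})^*)$, with $(W^{1,p})^* \cong W^{-1,p'}$ for $p \in (1, +\infty)$, and analogous identifications at the endpoints), so the sequential Banach--Alaoglu theorem together with a diagonal extraction over a countable dense family of exponents $p_k \in [1, 3/2)$ yields a subsequence (not relabeled) and functions $u_1, u_2$ with $u^n_1 \weakstar u_1$ in $L^\infty_t W^{1,p}_x$ for every $p \in [1, 3/2)$ and $u^n_2 \weakstar u_2$ in $L^\infty_t W^{1,\infty}_x$. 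Passing $\divergence u^n = 0$ to the weak limit shows that $u := u_1 + u_2$ is divergence free.

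Second, I would show that $\{X^n\}$ is Cauchy in $C([0,T]^2; L^0_\loc(\R^3))$. For every pair $n, m \in \N$, \cref{quantitative RLF bis} applied to $(\rho^n, \rho^m)$ yields, for every $R, \e, \eta > 0$, a constant $C$ and a radius $\lambda$ depending only on $K, R, \e, \eta, T$ such that
\begin{equation*}
\sup_{t,s \in [0,T]} \meas^d\bigl(\{x \in B_R \ \colon \ \abs{X^n(t,s,x) - X^m(t,s,x)} > \e\}\bigr) \leq C \bigl[\norm{u^n_1 - u^m_1}_{L^1_t L^1_x} + \norm{u^n_2 - u^m_2}_{L^1_t L^1(B_\lambda)}\bigr] + \eta.
\end{equation*}
By arbitrariness of $\eta$ and completeness of $C([0,T]^2; L^0_\loc(\R^3))$, the Cauchy property reduces to showing that the two strong norms on the right tend to $0$ as $n, m \to \infty$ along the subsequence. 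Once the Cauchy property is established, the limit flow $X$ is identified as the regular Lagrangian flow of $u$ by passing to the limit in the integral equation \eqref{ODE} satisfied by $X^n$, using Egoroff's theorem to handle the nonlinear composition $u^n(z, X^n(z,s,x))$.

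The main obstacle is precisely the upgrade from the weak-$*$ convergence of the first step to the strong in-space convergence required in the second. For $u^n_2$, the uniform bound in $L^\infty_t W^{1,\infty}_x$ together with the compact embedding $W^{1,\infty}(B_\lambda) \hookrightarrow L^\infty(B_\lambda)$ applied time-slice by time-slice, combined with the uniform $L^\infty_{t,x}$ dominant and dominated convergence in time, yields strong $L^1_t L^1(B_\lambda)$ convergence along a further subsequence. For $u^n_1$, I would combine the compact embedding $W^{1,p}(B_R) \hookrightarrow L^p(B_R)$ (Rellich--Kondrachov), which gives local strong convergence in space, with the compact support of the near-field kernel $E_1 \subset B_2$, which reduces the $L^1(\R^3)$ tail of $u^n_1$ to the $L^1(\R^3)$ tail of $\rho^n$. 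The delicate point is that no a priori time-derivative bound on $u^n$ is available (the statement does not invoke any equation satisfied by $\rho^n$), so Aubin--Lions cannot be applied directly; the explicit Oseen convolution structure, combined with a diagonal extraction, is what allows one to bypass this obstruction.
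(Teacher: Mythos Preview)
Your plan has a genuine gap at precisely the point you flag as ``delicate.'' You propose to show that $\{X^n\}$ is Cauchy by feeding $\norm{u^n_1-u^m_1}_{L^1_tL^1_x}$ and $\norm{u^n_2-u^m_2}_{L^1_tL^1(B_\lambda)}$ into \eqref{stability of RLF}, and you claim these strong norms tend to $0$ along a subsequence. But with only weak-$*$ convergence in $L^\infty_t W^{1,p}_x$ and no time compactness, this is simply false in general: take $\rho^n(t,x)=\phi(x)\cos(nt)$ with $\phi\in L^1(\R^3)$. Then $u^n_i(t,x)=(E_i*(-e_3\phi))(x)\cos(nt)$ satisfies all your bounds and converges weak-$*$ to $0$, yet no subsequence converges strongly in $L^1_tL^1(B_\lambda)$. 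Your appeals to Rellich--Kondrachov and to $W^{1,\infty}(B_\lambda)\hookrightarrow L^\infty(B_\lambda)$ ``time-slice by time-slice'' do not help, because compactness in space at each fixed $t$ yields nothing about convergence in $L^1_t$ without some equicontinuity in time; dominated convergence would require pointwise-in-$t$ convergence, which you do not have. The final sentence (``the explicit Oseen convolution structure, combined with a diagonal extraction, is what allows one to bypass this obstruction'') is an assertion, not an argument.

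The paper's proof avoids this obstruction entirely by \emph{not} trying to make $u^n-u^m$ small in strong norms. Instead one mollifies in space, setting $u^n_\delta=u^n*j_\delta$ and $u_\delta=u*j_\delta$, and writes
\[
X^n-X=(X^n-X^n_\delta)+(X^n_\delta-X_\delta)+(X_\delta-X).
\]
The middle term is handled, for each fixed $\delta$, by \cref{weakly convergent vector fields}: the mollified fields are uniformly bounded and Lipschitz, and weak-$*$ convergence of $u^n_\delta$ to $u_\delta$ is enough to force $X^n_\delta\to X_\delta$ in $C_\loc$. For the outer terms one applies \eqref{stability of RLF} to $(u^n,u^n_\delta)$ and to $(u,u_\delta)$; the point is that $\norm{u^n_{1,\delta}-u^n_1}_{L^1_tL^1_x}$ and $\norm{u^n_{2,\delta}-u^n_2}_{L^1_tL^1(B_\lambda)}$ are controlled, \emph{uniformly in $n$}, by the $L^1$-modulus of continuity of the kernels $E_1,E_2$ under translation (since $u^n_{i,\delta}-u^n_i=E_i*(-e_3\rho^n*j_\delta)+\text{error}$ and one exploits $\norm{\rho^n}_{L^\infty_tL^1_x}\le K$). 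Thus the mollification error vanishes as $\delta\to0$ uniformly in $n$, and no strong convergence of $u^n$ is ever needed. Your outline should be rewritten around this three-term decomposition.
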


\begin{proof}
Since $\{\rho^n\}_n$ is bounded in $L^\infty([0,T];L^1(\R^3))$, by \cref{summability of velocity} we get that $\{u^n_1\}$ is bounded in $L^\infty([0,T];W^{1,p}(\R^3))$ for $p \in [1, 3/2)$ and $\{u^n_2\}_n$ is bounded in $L^\infty([0,T];W^{1,\infty}(\R^3))$. Therefore, we find a limit vector field $u$ and a subsequence (not relabelled) such that the properties above hold true. It remains to prove the convergence of the flows. Fix a Friedrichs' mollifier $j_\delta$ and regularize the velocity fields with respect to the spatial variable. Thus, we set
$$u^n_{i,\delta} = u^n_i * j_\delta, \ \ \ u_{i,\delta} = u_i*j_{\delta} \ \ \ i = 1,2,\ n\in \N, \ \delta>0. $$
Then, fix $\delta>0$ and notice that the sequence $\{u^n_\delta\}_n$ satisfies the assumptions of \cref{weakly convergent vector fields}. Hence, letting $X^n_\delta, X_\delta$ be the classical flows associated to $u^n_\delta, u_\delta$ respectively, we infer that $X^n_\delta \to X_\delta$ in $C_\loc([0,T]^2 \times \R^3)$. 
In particular, if we write
$$X^n - X = (X^n - X^n_\delta) + (X^n_\delta - X_\delta) + (X_\delta - X) = I + II + III, $$
then for any $\delta>0$ we have that $X^n_\delta -X^n \to 0$ in $C([0,T]^2; L^0_\loc(\R^3))$. To estimate the first term, by the properties of convolution, we infer that 
\begin{equation}
u^n_{i,\delta} = u^n_i * j_\delta = (E_i * (-e_3 \rho^n ) )* j_\delta = E_i * (- e_3 \rho^n * j_\delta ) \ \ \ i=1,2. \nonumber
\end{equation}
Noticing that 
$$\sup_{n \in \N, \delta >0} \norm{\rho^n * j_\delta}_{L^\infty_t L^1_x} \leq K <+\infty$$
by \cref{quantitative RLF bis}, for any $R>0, \e>0, \eta>0$ we find a constant $C_{R, \e, \eta,T, K}>0$ and a radius $\lambda_{R, \e, \eta,T, K}>0$ (both independent of $n$ and $\delta$) such that \eqref{stability of RLF} holds true with $X^1 = X^n$ and $X^2 = X^n_\delta$. Then, we estimate the right hand side of \eqref{stability of RLF}. Writing explicitly the convolutions we estimate $\norm{u^n_{1,\delta}-u^n_1}_{L^1_t L^1_x}$: 
\begin{align}
    & \norm{u^n_{1,\delta} - u^n_1}_{L^1_t L^1_x}  \leq T \sup_{t \in [0,T]} \int_{\R^3} \bigg| \int_{B_\delta} (u^n_{1}(t,x-y) - u^n_1(t,x)) j_\delta(y) \di y \bigg| \di x  
    \\ & \quad \leq T \sup_{t \in [0,T]} \int_{B_\delta} j_\delta(y) \int_{\R^3 } \abs{u^n_1(t,x-y) - u^n_1(t,x)}\di x  \di y
    \\ & \quad = T \sup_{t \in [0,T]} \int_{B_\delta} j_\delta(y) \int_{\R^3} \bigg| \int_{\R^3}(E_1(x-y-z) - E_1(x-z))(-e_3 \rho^n(t,z))\di z\bigg| \di x  \di y 
    \\ & \quad \leq T \sup_{t \in [0,T]} \int_{B_\delta} j_\delta(y) \int_{\R^3}\abs{\rho^n(t,z)} \int_{\R^3}\abs{E_1(x-y-z) - E_1(x-z)} \di x  \di z \di y 
    \\ & \quad = T \sup_{t \in [0,T]} \int_{B_\delta} j_\delta(y) \int_{\R^3}\abs{\rho^n(t,z)} \int_{\R^3}\abs{E_1(x-y) - E_1(x)} \di x  \di z \di y  
    \\ & \quad = T \norm{\rho^n}_{L^\infty_t L^1_x} \int_{B_\delta}   j_\delta(y) \alpha(y) \di y 
\end{align}
where we set 
$$\alpha(y) = \int_{\R^3} \abs{E_1(x-y)- E_1(x)}\di x . $$
Since $E_1 \in L^1(\R^3)$, we infer that $\alpha(y)\to 0$ as $y \to 0$, yielding 
\begin{equation}
    \lim_{\delta \to 0} \sup_{n \in \N} \norm{u^n_{1,\delta} - u^n_1}_{L^1_t L^1_x} \leq T K \lim_{\delta \to 0} \int_{B_\delta} j_\delta(y) \alpha_1(y)\di y = 0. \label{weakly conv 1} 
\end{equation}
With the same technique, we estimate $\norm{u^n_{2,\delta}-u^n_2}_{L^\infty_t L^1(B_\lambda)}$: 
\begin{align}
    \norm{u^n_{2,\delta} - u^n_2}_{L^1_t L^1(B_\lambda)} & \leq T \sup_{t \in [0,T]} \int_{B_\delta} j_\delta(y) \int_{\R^3}\abs{\rho^n(t,z)} \int_{B_\lambda} \abs{E_2(x-y-z) - E_2(x-z)} \di x  \di z \di y 
    \\ & \leq T \Lip(E_2) \lambda^3 \norm{\rho^n}_{L^\infty_t L^1_x} \int_{B_\delta} j_\delta(y)\abs{y}\di y, 
\end{align}
since $E_2$ is Lipschitz continuous, yielding 
\begin{equation}
    \lim_{\delta \to 0} \sup_{n \in \N} \norm{u^n_{2,\delta} - u^n_2}_{L^1_t L^1(B_\lambda)} \leq T \Lip(E_2) \lambda^3 K \lim_{\delta \to 0} \int_{B_\delta} j_\delta(y) \abs{y}\di y = 0. \label{weakly conv 2} 
\end{equation}
Thus, we have shown that $X^n_\delta - X^n \to 0$ in $C([0,T]^2; L^0_\loc(\R^d))$ as $\delta \to 0$, uniformly with respect to $n \in \N$. Regarding the third term, since $u_{1,\delta}\to u_1$ in $L^1([0,T]; L^1(\R^3))$ and $u_{2,\delta} \to u_2$ in $L^1([0,T]; L^1_\loc(\R^3))$, by \cref{quantitative RLF} (recall that we still do not know that $u= E*(- e_3 \rho)$ for some $\rho \in L^\infty([0,T]; L^1(\R^3))$), we obtain that $X_\delta - X \to 0$ in $C([0,T]^2; L^0_\loc(\R^3))$ as $\delta \to 0$. Therefore, we conclude that $X^n - X \to 0$ in $C([0,T]^2; L^0_\loc(\R^3))$ as $n \to +\infty$. 
\end{proof}

Finally, we prove \cref{existence of sols}, following the lines of \cite{BBC16}*{Theorem 5.1}.   

\begin{proof}[Proof of \cref{existence of sols}]
Given $\rho_0 \in L^1(\R^3)$, we fix a Friedrichs' mollifier $j_\delta$ and consider the classical solution $\rho_\delta$ to \eqref{transport-stokes} with smooth initial datum $\rho_{0,\delta} = \rho_0 * j_\delta$ (see \cite{MS22}). Letting $u_\delta = E*(- e_3 \rho_\delta)$ and denoting by $X_\delta$ the classical flow generated by $u_\delta$, we have that $\rho_\delta$ is transported by $X_\delta$. In other words, as explained in \eqref{representation of solution bis}, for any $t,x,\delta$ we have that $\rho_\delta(t,x) = \rho_{0,\delta}(X_\delta(0,t,x))$. Since $\{\rho_\delta\}_{\delta}$ is a bounded sequence in $L^\infty([0,T]; L^1(\R^3))$, we find a subsequence (not relabelled) and an incompressible vector field $u$ with the properties of \cref{weakly convergent vector fields bis}. Moreover, denoting by $X$ the regular Lagrangian flow generated by $u$, we have that $X_\delta \to X$ in $C([0,T]^2; L^0_\loc(\R^3))$. Letting $\rho(t,x) = \rho_0(X(0,t,x))$, we claim that $\rho$ is a Lagrangian solution to \eqref{transport-stokes} with initial condition $\rho_0$. Indeed, \eqref{representation of solution} holds true by \eqref{representation of solution bis}. Moreover, we only have to check that $u= E*(-e_3 \rho)$. By the same argument of \cite{BC12}*{Proposition 7.7}, we obtain that $\rho_\delta \to \rho$ in $C([0,T]; L^1(\R^3))$, yielding $u_\delta = E*(- e_3 \rho_\delta) \to E*(-e_3 \rho)$ in $C([0,T]; L^1_\loc(\R^3))$. Since $u_\delta\weakstar u$ in $\mathcal{D}'((0,T) \times \R^3)$, we conclude that $u= E*(-e_3 \rho)$. 
\end{proof}

\subsection{Lagrangian vs weak vs renormalized solutions} \label{s:distr vs lagr}

In this section, we discuss the relation between Lagrangian, distributional and renormalized solutions to \eqref{transport-stokes}. This is a classical topic in the theory of the of linear transport equation \cites{AC08, AC14}. Indeed, in order to give \eqref{transport-stokes} a distributional meaning, we need that $\rho u \in L^1_\loc([0,T] \times \R^3)$. Therefore, if we assume $\rho \in L^\infty([0,T]; L^1 \cap L^p(\R^3) )$ for some $p \in (1, +\infty)$, recalling the decomposition $E= E_1+E_2$ by \cref{summability of velocity}, we have that $E_2*(-e_3 \rho) \in L^\infty([0,T]; L^\infty(\R^3))$ (see \eqref{equibound}). To estimate the convolution with $E_1$, recall that the second derivative of the Oseen tensor $E$ is a Calderon--Zygmund operator, yielding $E_1 *(-e_3 \rho) \in L^\infty([0,T]; W^{2,p}(\R^3))$. Hence, for $p \geq 3/2$, by Sobolev embedding we deduce that $\rho u \in L^1_\loc([0,T] \times \R^3)$. For $p \in (1, 3/2)$, by Sobolev embedding we have that $u \in L^\infty([0,T]; L^{q}_\loc(\R^3))$, where $q= 3p/(3-2p)$. Thus, $\rho u \in L^1_\loc([0,T] \times \R^3) $ provided that 
$$\frac{1}{p} + \frac{3-2p}{3p} \leq 1 \Longleftrightarrow p \geq \frac{6}{5}. $$
In other words, the distributional formulation of \eqref{transport-stokes} is available only in the range $p \in [6/5, +\infty]$. Thus, to study \eqref{transport-stokes} with $L^1$ initial data, the Lagrangian formulation is the only appropriate one. Motivated by the discussion above, we recall the following definitions. 

\begin{Def} [Distributional solution] \label{weak solution}
Fix $p \geq 6/5$. Given $\rho \in L^\infty([0,T]; L^1\cap L^p(\R^3))$, let us set $u= E*(-e_3 \rho)$, where $E$ is the Oseen tensor \eqref{kernel}. We say that $\rho$ is distributional solution to \eqref{transport-stokes} with initial datum $\rho_0 \in L^1 \cap L^p(\R^3)$ if
\begin{equation}
    \int_0^T \int_{\R^3} (\partial_t \phi + u \cdot \nabla \phi) \rho \di x  \di t = -\int_{\R^3} \rho_0(x) \phi(0,x)\di x \ \ \ \forall \phi \in C^\infty_c([0,T)\times \R^3). \label{weak formulation}
\end{equation}
\end{Def}

\begin{Def} [Renormalized solution] \label{renormalized solution}
Given $\rho \in L^\infty([0,T]; L^1(\R^3))$, let us set $u= E*(-e_3 \rho)$, where $E$ is the Oseen tensor \eqref{kernel}. We say that $\rho$ is a renormalized solution to \eqref{transport-stokes} with initial datum $\rho_0 \in L^1(\R^3)$ if for any bounded function $\beta \in C^1(\R, \R)$ with $\beta'$ bounded there holds that
\begin{equation}
    \int_0^T \int_{\R^3} (\partial_t \phi + u \cdot \nabla \phi) \beta(\rho) \di x  \di t = -\int_{\R^3} \beta(\rho_0(x)) \phi(0,x)\di x  \ \ \ \forall \phi \in C^\infty_c([0,T)\times \R^3). \label{renormalized formulation}
\end{equation} 
\end{Def}

If $p\geq 3/2$, by the DiPerna--Lions theory \cite{DPL89}, weak solutions to \eqref{transport-stokes} are renormalized according to \cref{renormalized solution}. Indeed,  given $\rho \in L^\infty([0,T]; L^p(\R^3))$ for some $p \in (1, +\infty)$, by Calderon--Zygmund estimates and Sobolev embedding, we have that $u= E*(-e_3 \rho) \in L^\infty([0,T]; W^{1,p^*}_\loc(\R^3))$, where $p^* = 3p/(3-p)$ if $p<3$, $p^*$ is any exponent in $(1, +\infty)$ for $p =3$ and $p^*= \infty$ for $p > 3$. Thus, $(\rho, u)$ satisfies the Di Perna--Lions condition if 
$$ \frac{1}{p} + \frac{3-p}{3p} \leq 1 \Longleftrightarrow p \geq \frac{3}{2}.  $$ 

We discuss the proof of \cref{distributionality}. 

\begin{proof} [Proof of \cref{distributionality}]
Fix a bounded scalar function $\beta \in C^1(\R)$ with $\beta'$ bounded and a test function $\phi \in C^\infty_c([0,T) \times \R^3)$. Since $\beta(\rho) u \in L^1_\loc([0,T]\times \R^3)$, by the push-forward formula and \eqref{representation of solution}, we have that 
\begin{equation} \label{distributional formulation 1}
    \begin{split}
         \int_0^T \int_{\R^3} & \beta(\rho(t,x)) ( \partial_t \phi(t,x) + u(t,x)\cdot \nabla \phi(t,x)) \di x  \di t 
    \\ & = \int_0^T \int_{\R^3} \beta(\rho_0(x)) ( \partial_t \phi(t,X(t,0,x)) + u(t,X(t,0,x))\cdot \nabla \phi(t,X(t,0,x))) \di x  \di t 
    \\ & = \int_{\R^3}  \beta(\rho_0(x)) \left[ \int_0^T ( \partial_t \phi(t,X(t,0,x)) + u(t,X(t,0,x))\cdot \nabla \phi(t,X(t,0,x))) \di t \right] \di x . 
    \end{split}
\end{equation}
Moreover, given $x \in \R^3$ such that the trajectory $t \mapsto X(t,0,x)$ is an absolutely continuous curve that satisfies \eqref{ODE}, then by the chain rule for Sobolev functions we get that $t \mapsto \phi(t, X(t,0,x))$ is in $W^{1,1}((0,T))$ and there holds that 
\begin{equation} \label{distributional formulation 2}
    \begin{split}
         - \phi(0, x) & = \phi(T, X(T,0,x)) - \phi(0, X(0,0,x)) 
    \\ & = \int_0^T \left[ \partial_t \phi(t, X(t,0,x)) + \dot{X}(t,0,x) \cdot \nabla \phi(t, X(t,0,x)) \right] \di t 
    \\ & = \int_0^T \left[ \partial_t \phi(t, X(t,0,x)) + u(t, X(t,0,x)) \cdot \nabla \phi(t, X(t,0,x)) \right] \di t. 
    \end{split}
\end{equation}
Integrating \eqref{distributional formulation 2} with respect to the measure $\beta(\rho_0)\mathcal{L}^3$, then \eqref{renormalized formulation} follows by \eqref{distributional formulation 1}. 

Given $p \geq 6/5$, let $\rho_0 \in L^1 \cap L^p(\R^3)$ and let $\rho \in L^\infty([0,T]; L^1 \cap L^p(\R^3))$ be a renormalized solution to \eqref{transport-stokes} according to \cref{renormalized solution} with initial condition $\rho_0$. Let $\{\beta_n\}_n$ a sequence of scalar functions with the following properties: 
\begin{itemize}
    \item $\beta_n \in C^1(\R)$ is bounded and $\beta'_n$ is bounded; 
    \item $\beta_n(s) = s$ for $s \in [-n,n]$; 
    \item $\abs{\beta_n(s)} \leq \abs{s}$ for any $n\in \N, s \in \R$. 
\end{itemize}
Fix a test function $\phi \in C^\infty_c([0,T)\times \R^3)$. Writing \eqref{renormalized formulation} for $\beta_n$ and letting $n \to \infty$, in the limit we obtain \eqref{weak formulation}. To be precise, we can pass to the limit in \eqref{renormalized formulation} by the Dominated Covergence Theorem, since $\rho u \in L^1_\loc([0,T]\times \R^3)$, as discussed at the beginning of this section. 

Lastly, fix $p \geq 3/2$ and let $\rho \in L^\infty([0,T]; L^p(\R^3))$ be any distributional solution to \eqref{transport-stokes} with initial datum $\rho_0$. As discussed above, we have that $u \in L^\infty([0,T]; W^{1,q}_\loc(\R^3))$ and $p,q$ satisfies $1/p + 1/q \leq 1 $. 
Hence, $\rho$ is the unique weak solution to the linear transport equation with velocity field $u$. Then, by uniqueness, we infer that $\rho$ is a Lagrangian solution to \eqref{transport-stokes} according to \cref{Lagrangian solution}. In particular, $\rho$ solves \eqref{transport-stokes} in the renormalized sense (see \cref{renormalized solution}). 
\end{proof}

\section{Stability of Lagrangian solutions}

This section is devoted to the proof of \cref{estimate on wasserstein}. 

\subsection{A nonlinear superposition-type principle} \label{s:nonlinear superposition}

The superposition principle is a classical tool in the theory of the linear transport equation, that can be summarized as follows. Uniqueness of the trajectories at the level of the ODE with a given vector field is essentially equivalent to uniqueness of nonnegative solutions at the level of the linear transport equation with the same vector field. However, exploiting the linear structure, it is possible to prove this substantial equivalence in a very low regularity setting. We refer to \cites{A04, AC08, AC14} for an extensive discussion in the linear case. 

As already mentioned, the transport--Stokes system share some structural properties with the 2D Euler equation and the Vlasov--Poisson system, the only difference being the convolution kernel that provides the velocity field from the advected solution. In the case of 2D Euler, the so-called Biot--Savart law maps a vorticity in $L^\infty([0,T]; L^1\cap L^p(\R^2))$ onto a velocity in $L^\infty([0,T]; W^{1,p}_\loc(\R^2))$, as soon as $p< +\infty$. Due to lack of Calderon--Zygmund estimates in $L^\infty$, the velocity $u$ associated to a vorticity $\omega \in L^\infty([0,T]; L^1 \cap L^\infty(\R^2))$ belongs to $L^\infty([0,T]; W^{1,p}_{\loc}(\R^2))$ for any $p< +\infty$, enjoying a log-Lipschitz modulus of continuity in space, uniformly in time. Therefore, $u$ has a classically defined flow (see the discussion in \cref{s:classical flow}) and the argument by Yudovich \cite{Y63} shows that uniqueness for the ODE associated to the linear transport equation is enough to have uniqueness at the level of the nonlinear PDE. In \cite{Y95}, Yudovich extends his previous result to the case of a vorticity in any $L^p$ for $p$ finite, with a logarithmic growth of the $L^p$ norms as $p \to +\infty$. Thus, the corresponding velocity field turns out to be Osgood continuous, allowing for a classically defined flow. As in \cite{Y63}, uniqueness of trajectories at the level of the ODE associated to the linear transport equation is enough to prove uniqueness of Lagrangian solutions to the nonlinear PDE. Following this principle, a further generalization of the uniqueness result in \cite{Y95}, has recently been established in \cite{CS21}. Among other things, the authors prove uniqueness for solutions in $L^\infty([0,T]; L^p(\R^2))$ for any $p$ finite, provided that the $L^p$ norms grow slow enough to give a velocity field with modulus of continuity that satisfies the Osgood condition. Broadly speaking, we denote these refinements of $L^\infty$ as Yudovich-type spaces. To summarize, the aforementioned uniqueness results on the 2D Euler equations \cites{Y63, Y95, CS21} is based on the following nonlinear superposition-type principle: quantitative stability (thus uniqueness) for the trajectories of the ODE associated to the linear transport equation is enough to prove stability (thus uniqueness) at the level of Lagrangian solution for the nonlinear transport equation. 

The method of \cite{CS21} relies on basic techniques and extends to other active scalar equations, such as the Vlasov--Poisson system. In this case, we refer to \cite{L06} for the case of bounded density and \cite{M14} for the case of density in any $L^p$ space for $p$ finite, with $p \log(p)$ growth of the $L^p$ norms as $p \to \infty$. We also mention the upcoming paper \cite{CISS23}, where the authors, following \cite{CS21}, exploit the Hamiltonian structure of the associated ODE system to find general conditions on the growth of $L^p$ norms of the density that allow for a classically defined flow. Lastly, in \cites{CL13,IS23} the authors consider continuity equations of non-local type under continuity assumptions on the convolution kernel. 

The uniqueness and stability for the transport--Stokes system can be studied within the same framework of \cite{CS21}. Indeed, since the convolution with the Oseen tensor \eqref{kernel} allows for a gain of two derivatives (see \cref{summability of velocity}), the space $L^3$ for transport--Stokes plays the role of the critical space $L^\infty$ for 2D Euler. Indeed, by Calderon--Zygmund inequality and Sobolev embedding, the velocity field associated to a density in $L^{3+}$ is Lipschitz continuous, allowing for a classically defined flow. In the case of $L^3$ density, the corresponding velocity field is log-Lipschitz continuous, thus possessing a classical flow. Therefore, it is relevant to look for Yudovich-type refinements of $L^3$. Motivated by the discussion above, we introduce some notation. 

\begin{Def} \label{space theta}
Let $\Theta\colon [1,3) \to [1, +\infty)$ be a non decreasing function. We define   
$$L^\Theta(\R^3) = \left\{ u \in \bigcap_{p \in [1,3)} L^p(\R^3) \ : \ \norm{u}_{L^\Theta_x} = \sup_{p \in [ 1, 3)}  \frac{ \norm{u}_{L^p_x}}{\Theta(p)} <+\infty \right\}. $$
We say that $L^\Theta$ is a Yudovich-type refinement of $L^3$ with growth function $\Theta$. 
\end{Def}

\begin{Def} 
Let $\Theta\colon [1,3) \to [1, +\infty)$ be a non decreasing function. We define 
\begin{equation}
\omega_\Theta(s) = 
\begin{cases}
\displaystyle s (1-\log(s)) \Theta\left( \frac{2-3\log(s)}{1- \log(s)}\right) & s \in (0, 1),
\\ \Theta(2) & s \in [1, +\infty). 
\end{cases} \label{modulus of continuity}
\end{equation}
\end{Def}

\begin{Def} [Osgood condition]
Let $\omega\colon [0, +\infty) \to [0, +\infty)$ be a modulus of continuity, i.e.\ $\omega$ is a non decreasing function such that $\omega(s)=0$ if and only if $s=0$ and $\omega(s)\to 0$ as $s \to 0$. We say that $\omega$ satisfies the Osgood condition if 
\begin{equation}
    \int_0^1 \frac{1}{\omega(s)} \di s = +\infty. \label{osgood}  
\end{equation}
\end{Def}

\begin{Def} \label{C^Theta}
Let $\Theta\colon [1, +\infty) \to  [1, +\infty)$ be a non decreasing function and let $\omega_\Theta$ be defined by \eqref{modulus of continuity}. We denote by
\begin{equation}
C^{\omega_\Theta} (\R^d) = \left\{ g\colon \R^d\rightarrow \R \ : \ \sup_{x\neq y} \frac{\abs{g(x)- g(y)}}{\omega_\Theta (\abs{x-y})} < +\infty \right\}. 
\end{equation}
\end{Def}

\begin{rem}
If $\Theta\colon [1,3) \to [1, +\infty)$ is a bounded function, then we have that $L^\Theta(\R^d) = L^1 \cap L^3(\R^d)$, by \cref{space theta}. Hence, throughout this note, we focus on the case in which $\Theta$ is unbounded. The fact that $\omega_\Theta$ in \eqref{modulus of continuity} is a modulus of continuity and the validity of \eqref{osgood} depend on the rate of blow-up of $\Theta$ at $3^-$.  
\end{rem}

\begin{rem}
Under the assumptions of \cref{estimate on wasserstein}, $\rho^1(t), \rho^2(t)$ are positive densities, with the same mass and finite first moment, for any $t \in [0,T]$. Hence, we can compare $\rho^1(t), \rho^2(t)$ with respect to the $1$-Wasserstein distance (see \cref{wasserstein 1}). Indeed, as explained in \cref{norms are preserved}, \eqref{same norm} and the sign are propagated along the time evolution by Lagrangian solutions to \eqref{transport-stokes}. Moreover, by \eqref{equibound}, \eqref{equibound 2} we infer that $u^i \in L^\infty([0,T];L^\infty(\R^3))$ for $i=1,2$. Thus, by the push-forward formula and \eqref{representation of solution}, for $i=1,2$ and for any $t \in [0,T]$ we have that 
\begin{align}
    \int_{\R^3} \abs{x} \rho^i(t,x)\di x  & = \int_{\R^3} \abs{X^i(t,0,x)} \rho_0(x)\di x  
    \\ & \leq \int_{\R^3} \abs{x} \rho_0(x)\di x  + \int_{\R^3} \int_0^T \abs{u^i(s, X^i(s,0,x))} \rho_0(x) \di s  \di x  
    \\ & \leq \int_{\R^3} \abs{x} \rho_0(x)\di x  + T \norm{u}_{L^\infty_t L^\infty_x} \norm{\rho_0}_{L^1_x} <+\infty.
\end{align}
To conclude, we remark that \eqref{finite first moment} holds true whenever the initial densities are compactly supported nonnegative $L^1$ functions. 
\end{rem}

\subsection{Outline of the strategy}

The proof of \cref{estimate on wasserstein} is based on the Lagrangian technique presented in \cite{CS21}. Fix a non decreasing function $\Theta\colon[1,3) \to [1, +\infty)$ and let $\rho_0 \in L^\Theta(\R^3)$. Take a Lagrangian solution $\rho$ to \eqref{transport-stokes} according to \cref{Lagrangian solution} with initial condition $\rho_0$. By \cref{norms are preserved}, we infer that $\rho\in L^\infty([0,T]; L^\Theta(\R^3))$. If $\omega_\Theta$ defined by \eqref{modulus of continuity} satisfies \eqref{convexity} and \eqref{osgood}, then it turns out that the corresponding velocity field $u$ has an Osgood modulus of continuity. Thus, by means of quantitative estimates, $u$ has a classically defined flow (see \cref{facts on classical RLF}) and, exploiting the machinery provided by \cite{CS21}, we are able to translate well posedness for the ODE system into well posedness for the nonlinear PDE \eqref{transport-stokes}. 

\begin{prop} \label{regularity of velocity}
Let $\Theta\colon [1, 3) \to [1, +\infty)$ be a non decreasing function. Letting $E$ the Oseen tensor \eqref{kernel}, given $\Phi \in L^\Theta$ then $E*\Phi \in L^\infty \cap C^{\omega_\Theta}$ and for any $x,y \in \R^3$ there holds that 
\begin{equation}
\int_{\R^3} \abs{ E(x-z) - E(y-z)} \abs{\Phi(z)} \di z \leq C \norm{\Phi}_{L^\Theta_x} \omega_{\Theta}(\abs{x-y}), \label{increment}
\end{equation}
\begin{equation}
    \int_{\R^3} \abs{ E(x-z) } \abs{\Phi(z)}\di z \leq C \norm{\Phi}_{L^\Theta_x}, \label{L^infty bound}
\end{equation}
\end{prop}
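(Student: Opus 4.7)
The plan is to exploit the pointwise bounds $\abs{E(w)}\lesssim\abs{w}^{-1}$ and $\abs{\nabla E(w)}\lesssim\abs{w}^{-2}$ and then apply H\"older's inequality with an exponent chosen in $(3/2,3)$, invoking $\norm{\Phi}_{L^p}\leq\Theta(p)\norm{\Phi}_{L^\Theta}$ from \cref{space theta}. The nontrivial point is to select the exponent as a function of $h=\abs{x-y}$ so as to reproduce the prescribed modulus $\omega_\Theta$.

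For \eqref{L^infty bound} I split $\R^3=B_1(x)\cup B_1(x)^c$. On $B_1(x)$, H\"older with $p=p'=2$ and $\abs{\cdot}^{-1}\in L^2_\loc$ gives a bound by $C\norm{\Phi}_{L^2}\leq C\Theta(2)\norm{\Phi}_{L^\Theta}$; on $B_1(x)^c$ the pointwise bound $\abs{E}\leq 1$ produces $C\norm{\Phi}_{L^1}\leq C\Theta(1)\norm{\Phi}_{L^\Theta}$. For \eqref{increment} set $h=\abs{x-y}$. When $h\geq 1$, $\omega_\Theta(h)=\Theta(2)\geq 1$ and the bound follows from \eqref{L^infty bound} applied to $E(x-\cdot)$ and $E(y-\cdot)$ separately. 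When $h<1$, I split $\R^3=A\cup A^c$ with $A=B_{2h}(x)\cup B_{2h}(y)$. On the near region $A$, the triangle inequality together with $\abs{E(w)}\lesssim\abs{w}^{-1}$ and H\"older (requiring $p'<3$, i.e.\ $p>3/2$, so that $\abs{z}^{-p'}$ is integrable near the origin) give
\begin{equation*}
\int_A\abs{E(x-z)-E(y-z)}\abs{\Phi(z)}\,dz\lesssim \frac{h^{(3-p')/p'}}{(3-p')^{1/p'}}\norm{\Phi}_{L^p}.
\end{equation*}
On the far region $A^c$, the mean value theorem applied to the smooth function $E$, combined with $\abs{\zeta-z}\geq \abs{x-z}/2$ for every $\zeta$ on the segment $[x,y]$ and $\abs{\nabla E(w)}\lesssim\abs{w}^{-2}$, yields $\abs{E(x-z)-E(y-z)}\lesssim h\abs{x-z}^{-2}$, and H\"older (requiring $2p'>3$, i.e.\ $p<3$, for integrability at infinity) gives
\begin{equation*}
\int_{A^c}\abs{E(x-z)-E(y-z)}\abs{\Phi(z)}\,dz\lesssim \frac{h^{(3-p')/p'}}{(2p'-3)^{1/p'}}\norm{\Phi}_{L^p}.
\end{equation*}

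The decisive step, which I expect to be the main obstacle, is the choice of $p$. I pick $p=q(h):=(2-3\log h)/(1-\log h)\in(2,3)$, so that $p\to 3$ as $h\to 0^+$. Writing $\ell=-\log h\geq 0$, a direct computation gives $p'=(2+3\ell)/(1+2\ell)$, hence $h^{(3-p')/p'}=h\,e^{\ell/(2+3\ell)}\leq e^{1/3}h$, the factor $(3-p')^{-1/p'}=((1+3\ell)/(1+2\ell))^{-1/p'}$ is uniformly bounded since $(1+3\ell)/(1+2\ell)\in[1,3/2)$, while $(2p'-3)^{-1/p'}=(1+2\ell)^{1/p'}\leq 2(1+\ell)$ since $1/p'<1$. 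Combining both estimates with $\norm{\Phi}_{L^{q(h)}}\leq\Theta(q(h))\norm{\Phi}_{L^\Theta}$ then produces
\begin{equation*}
\int_{\R^3}\abs{E(x-z)-E(y-z)}\abs{\Phi(z)}\,dz\leq C\,h(1+\ell)\,\Theta(q(h))\,\norm{\Phi}_{L^\Theta}=C\,\omega_\Theta(h)\,\norm{\Phi}_{L^\Theta}.
\end{equation*}
The balancing is delicate because the prefactors $(3-p')^{-1/p'}$ and $(2p'-3)^{-1/p'}$ blow up at opposite endpoints of the admissible interval $(3/2,3)$; the specific choice $q(h)$ baked into \eqref{modulus of continuity} is calibrated exactly to keep both prefactors controlled while producing the required power $h(1-\log h)$.
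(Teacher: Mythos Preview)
Your proof is correct and follows essentially the same route as the paper's. The paper packages the near/far estimate into a separate lemma bounding $\norm{\tau_h E-E}_{L^{p'}}$ by $C(3-p')^{-1}(2p'-3)^{-1}|h|^{3/p'-1}$ and then applies H\"older, whereas you apply H\"older first and then carry out the identical decomposition; both arrive at the same prefactors and make the same choice $p=q(h)=(2-3\log h)/(1-\log h)$, which is precisely the argument of $\Theta$ in \eqref{modulus of continuity}.
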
 

\begin{rem} \label{the flow is classical}
Under the assumptions of \cref{regularity of velocity}, if we suppose in addition that $\omega_\Theta$ satisfies \eqref{osgood}, then the velocity field associated to a Lagrangian solution to \eqref{transport-stokes} with initial datum in $L^\Theta$ satisfies the assumptions of \cref{facts on classical RLF}. Thus, the corresponding flow is classically defined.  
\end{rem}

The proof of \cref{estimate on wasserstein} relies on an estimate for a suitable integral distance.  

\begin{prop} \label{estimate for the relative distance} 
Let $\Theta\colon [1, +\infty) \to [1, +\infty)$ be any non decreasing function. Let $\rho^1, \rho^2$ be Lagrangian solutions to \eqref{transport-stokes} according to \cref{Lagrangian solution} with nonnegative initial conditions $\rho_0^1, \rho_0^2$. Assume that \eqref{same norm}, \eqref{a priori growth} and \eqref{convexity} are satisfied. Suppose that $\norm{\rho_0^1}_{L^1_x}\neq 0$. Let $T_0\colon \R^3 \to \R^3$ be an optimal map between $\rho_0^1$ and $\rho_0^2$ according to \cref{optimal map}. Let $ X^1, X^2\colon[0,T]^2 \times \R^3 \to \R^3$ be the flows associated to $\rho^1, \rho^2$, respectively. We define the quantity 
\begin{equation}
Q(t) =  \frac{1}{\norm{\rho_0^1}_{L^1_x}} \int_{\R^3 \times \R^3} \abs{X^1(t,0, x)- X^2(t,0, T(x))} \rho_0^1(x) \di x  . \label{integral distance}
\end{equation} 
Then, for any $ t> 0$, there holds that 
\begin{equation} \label{dis_bella}
Q(t) \leq \W_1(\rho_0^1, \rho_0^2) + C \int_0^t \omega_{\Theta}(Q(s)) \di s, 
\end{equation} 
where the constant $C>0$ depends only on $ \norm{\rho_0^i}_{L^\Theta_x}$ for $i=1,2$. 
\end{prop}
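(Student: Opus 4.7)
My plan is to differentiate the Lagrangian trajectory difference via the integral form of \eqref{ODE}, split the velocity term through the triangle inequality, and estimate the two resulting pieces using respectively the Osgood continuity of $u^1$ and a Loeper-type transport-plan argument; the concavity of $\omega_\Theta$ assumed in \eqref{convexity} allows Jensen's inequality to close the loop. Concretely, subtracting the integral forms of \eqref{ODE} for $X^1(\cdot,0,x)$ and $X^2(\cdot,0,T(x))$, taking absolute values, integrating against $\rho_0^1 \di x$, dividing by $\norm{\rho_0^1}_{L^1_x}$, and using the optimality \eqref{minimality} on the initial term gives
\begin{equation*}
Q(t) \le \frac{\W_1(\rho_0^1,\rho_0^2)}{\norm{\rho_0^1}_{L^1_x}} + \frac{1}{\norm{\rho_0^1}_{L^1_x}} \int_0^t \int_{\R^3} \abs{u^1(s,X^1(s,0,x)) - u^2(s,X^2(s,0,T(x)))} \rho_0^1(x) \di x \di s.
\end{equation*}
The inner integrand is then split through the intermediate value $u^1(s, X^2(s,0,T(x)))$.

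For the first piece $\abs{u^1(s,X^1) - u^1(s,X^2\circ T)}$, \cref{norms are preserved} gives $\norm{\rho^1(s)}_{L^\Theta_x} = \norm{\rho_0^1}_{L^\Theta_x}$, so \cref{regularity of velocity} yields a uniform $C^{\omega_\Theta}$ bound for $u^1(s,\cdot)$. Hence this piece is controlled by $C \int \omega_\Theta(\abs{X^1 - X^2\circ T}) \rho_0^1 \di x$, and by concavity of $\omega_\Theta$ and Jensen's inequality applied to the probability measure $\rho_0^1 \di x/\norm{\rho_0^1}_{L^1_x}$, this is bounded by $C\norm{\rho_0^1}_{L^1_x} \omega_\Theta(Q(s))$.

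For the second piece $\abs{(u^1-u^2)(s,X^2\circ T)}$, pushing forward $\rho_0^1$ by $T$ and then by $X^2(s,0,\cdot)$ converts its spatial integral into $\int \abs{(u^1-u^2)(s,y)}\rho^2(s,y) \di y$. Introduce the transport plan $\pi_s = (X^1(s,0,\cdot), X^2(s,0,T(\cdot)))_\# \rho_0^1 \in \Gamma(\rho^1(s),\rho^2(s))$. Using $u^1 - u^2 = E\ast(-e_3(\rho^1-\rho^2))$, one rewrites
\begin{equation*}
(u^1-u^2)(s,y) = -e_3 \int \bigl[E(y-z^1) - E(y-z^2)\bigr] \di \pi_s(z^1,z^2).
\end{equation*}
Taking absolute values and applying Fubini, the inner $y$-integral is bounded via \eqref{increment} with $\Phi = \rho^2(s,\cdot)$ and the symmetry $E(-\cdot)=E(\cdot)$ by $C\norm{\rho_0^2}_{L^\Theta_x} \omega_\Theta(\abs{z^1-z^2})$. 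A final application of Jensen's inequality against the probability measure $\pi_s/\norm{\rho_0^1}_{L^1_x}$ then bounds the whole expression by $C \norm{\rho_0^1}_{L^1_x}\omega_\Theta(Q(s))$.

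Summing the two pieces, absorbing the mass and $L^\Theta$ norms into $C$, and integrating in $s$ yields \eqref{dis_bella}. The main obstacle is the Loeper-type step: one must introduce the plan $\pi_s$ — made possible by the pushforward structure inherent in \cref{Lagrangian solution} — so that the difference $\rho^1(s)-\rho^2(s)$ reduces to a superposition of Dirac differences and \eqref{increment} becomes applicable. The concavity of $\omega_\Theta$ is essential in both the first-piece and Loeper estimates to return to $\omega_\Theta(Q(s))$ after each Jensen application; this is the transport--Stokes analogue of the log-Lipschitz framework of \cite{CS21} for 2D Euler.
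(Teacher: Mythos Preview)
Your proposal is correct and follows essentially the same route as the paper's proof: the same triangle-inequality splitting through $u^1(s,X^2(s,0,T(x)))$, the same use of \cref{regularity of velocity} plus Jensen for the first piece, and the same Loeper-type manipulation for the second piece (the paper writes out the push-forwards via \eqref{representation of solution} and \eqref{push forward} explicitly rather than packaging them into the plan $\pi_s$, but the computation is identical). Your invocation of the symmetry $E(-\cdot)=E(\cdot)$ to match the form of \eqref{increment} is exactly the implicit step the paper takes when it changes variables to land on $\int |E(x-X^1)-E(x-X^2\circ T)|\,\rho^2(s,x)\di x$.
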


With the help of \cref{estimate for the relative distance}, we are able to conclude the proof of \cref{estimate on wasserstein}. 

\begin{proof} [Proof of \cref{estimate on wasserstein}]
To begin, we notice that the case of $\rho_0^1 \equiv \rho_0^2 \equiv 0$ is trivial. Indeed, since any rearrangement invariant norm is preserved along the time evolution (see \cref{norms are preserved}), it is clear that for any $t \in [0,T]$ we have that $\rho^1(t,\cdot) = \rho^2(t,\cdot) =0$ for a.e.\ $x \in \R^3$, yielding \eqref{dis bella}. Assume that $\rho_0^1, \rho_0^2$ do not vanish identically. Thus, if we define $Q(t)$ as in \eqref{integral distance}, we get \eqref{dis_bella} with a uniform constant $C = C(\norm{\rho_0^1}_{L^\Theta_x}, \norm{\rho_0^2}_{L^\Theta_x})>0$. Hence, with the same argument of proof of \cref{RLF for osgood}, we find a function $\Tilde{\Gamma}\colon [0,T] \times [0, +\infty)^3 \to [0, +\infty)$ depending only on $T, \Theta$ with the following properties: 
\begin{itemize}
    \item $\Tilde{\Gamma}$ is non decreasing with respect to any variable; 
    \item $\Tilde{\Gamma} (\cdot, \sigma, \cdot, \cdot)\to 0$ uniformly on compact sets of $[0,T] \times [0, +\infty)^2$ as $\sigma \to 0$;
    \item there holds that 
    \begin{equation}
        Q(t) \leq \Tilde{\Gamma} (t, \W_1(\rho_0^1, \rho_0^2), \norm{\rho_0^1}_{L^\Theta_x}, \norm{\rho_0^2}_{L^\Theta_x}). \label{osgood quantitative}
    \end{equation}
    \end{itemize}   
To conclude, we check that 
\begin{equation}
\W_1(\rho^1(t), \rho^2(t)) \leq \norm{\rho_0^1}_{L^1_x} Q(t) \ \ \ \forall t \in [0,T], \label{upper bound wasserstein}
\end{equation}
yielding \eqref{dis bella} with 
\begin{equation}
    \Gamma(\tau, \sigma, \beta_1, \beta_2) = \Theta(1) \beta_1 \Tilde{\Gamma}(\tau, \sigma, \beta_1, \beta_2). \label{explicit gamma}
\end{equation}
By the push-forward formula, we can write 
\begin{equation}
Q(t) \norm{\rho_0^1}_{L^1_x} = \int_{\R^3} \abs{X^1(t,0,x) - X^2(t, 0,T(x))} \rho_0^1(x) \di x  = \int_{\R^3 \times \R^3} \abs{x-y} \di \mu_t(x,y), \nonumber
\end{equation} 
where $\mu_t$ is the Borel measure on $\R^3 \times \R^3$ defined by 
\begin{equation}
\mu_t = (X^1(t,0, \cdot) \times (X^2(t,0, T(\cdot)))) _\# (\rho_0^1 \mathcal{L}^3). \nonumber
\end{equation}
If we denote by $\pi_1, \pi_2$ the projections onto the first and the second component respectively, it is easy to compute the marginals of the measure $\mu_t$. Indeed, by \eqref{representation of solution} and \eqref{push forward}, we have that 
\begin{equation}
\begin{split}
(\pi_1)_\# \mu_t & = (X^1(t,0, \cdot))_\# (\rho_0^1 \mathcal{L}^3) = \rho^1(t) \mathcal{L}^3,    
\\ (\pi_2)_\# \mu_t & = (X^2(t,0, T(\cdot)))_\# (\rho_0^1 \mathcal{L}^3) = (X^2(t, 0, \cdot))_\# (\rho_0^2 \mathcal{L}^3) = \rho^2(t) \mathcal{L}^3.
\end{split}
\end{equation}
Hence, $\mu_t$ is a transport plan between $\rho^1(t)$ and $\rho^2(t)$ according to \cref{transport plan} (see \cref{maps vs plans}). Thus, \eqref{upper bound wasserstein} holds true by \cref{optimal map}. In conclusion, if $\rho_0^1 = \rho_0^2$, by \eqref{dis bella} and the properties of $\Gamma$, we have that 
\begin{equation}
\W_1(\rho^1(t), \rho^2(t)) =0 \ \ \ \forall t \in [0,T]. \nonumber 
\end{equation}
Since $\W_1$ is a distance, we infer that for any $t \in [0,T]$ we have that $\rho^1(t,\cdot)= \rho^2(t, \cdot)$ for a.e.\ $x \in \R^3$. 
\end{proof}

\subsection{Regularity of the velocity field}

The proof of \cref{regularity of velocity} is based on the following lemma, in the spirit of \cite{BBC16bis}*{Lemma 8.1}. For the reader's convenience, we give a detailed proof. We denote by $\tau_h$ the translation operator, namely for any function $\Phi\colon \R^d \to \R$ we set
$$\tau_h \Phi(x) = \Phi(x+h) \ \ \ h \in \R^d. $$

\begin{lem} \label{translations}
For any $p \in (3/2,3)$ and for any $h \in \R^3$ there holds that 
\begin{equation}
\norm{\tau_h E -E}_{L^p_x} \leq \frac{C}{(3-p)(2p-3)} \abs{h}^{\frac{3}{p} -1}, \label{estimate translations}
\end{equation}
where the constant $C>0$ is independent of $p$ and $h$. 
\end{lem}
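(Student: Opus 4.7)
The plan is to exploit the explicit structure of the Oseen tensor, namely that $\abs{E(x)} \leq C/\abs{x}$ and $\abs{\nabla E(x)} \leq C/\abs{x}^2$, and to split the integral according to the distance from the singularity. Writing $\norm{\tau_h E - E}_{L^p_x}^p = I_1 + I_2$ with
\begin{align*}
I_1 &= \int_{B_{2\abs{h}}} \abs{E(x+h) - E(x)}^p \di x, \\
I_2 &= \int_{\R^3 \setminus B_{2\abs{h}}} \abs{E(x+h) - E(x)}^p \di x,
\end{align*}
I would treat each region with a different bound. On the near region $B_{2\abs{h}}$, I would use the triangle inequality together with $\abs{E(y)} \leq C/\abs{y}$ to absorb everything into $\abs{E(x+h)}^p + \abs{E(x)}^p$. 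A change of variables reduces both pieces to an integral of $\abs{x}^{-p}$ over $B_{3\abs{h}}$, giving
$$ I_1 \leq \frac{C \abs{h}^{3-p}}{3-p}. $$
Here the factor $(3-p)^{-1}$ arises precisely because $p<3$ is the threshold for $\abs{x}^{-1}$ to be locally $L^p$-integrable in $\R^3$.

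On the far region, for $\abs{x} > 2\abs{h}$ the segment from $x$ to $x+h$ stays in $\{\abs{y} \geq \abs{x}/2\}$, so the mean value theorem combined with $\abs{\nabla E(y)} \leq C/\abs{y}^2$ yields $\abs{E(x+h) - E(x)} \leq C \abs{h}/\abs{x}^2$. Raising to the $p$-th power and integrating,
$$ I_2 \leq C \abs{h}^p \int_{\abs{x} > 2\abs{h}} \abs{x}^{-2p} \di x = \frac{C \abs{h}^{3-p}}{2p-3}, $$
where the factor $(2p-3)^{-1}$ reflects the threshold $p > 3/2$ needed for $\abs{x}^{-2}$ to be $L^p$-integrable at infinity in $\R^3$.

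To finish, I would take $p$-th roots, apply subadditivity $(I_1 + I_2)^{1/p} \leq I_1^{1/p} + I_2^{1/p}$ (valid for $p \geq 1$), and note that on $p \in (3/2,3)$ the exponents $(3-p)^{1-1/p}$ and $(2p-3)^{1-1/p}$ are both bounded by an absolute constant, so
$$ (3-p)^{-1/p} \leq C (3-p)^{-1}, \qquad (2p-3)^{-1/p} \leq C (2p-3)^{-1}. $$
Finally, the algebraic identity
$$ \frac{1}{3-p} + \frac{1}{2p-3} = \frac{p}{(3-p)(2p-3)} \leq \frac{3}{(3-p)(2p-3)} $$
collapses the two singular factors into the single product appearing in \eqref{estimate translations}, with the common prefactor $\abs{h}^{(3-p)/p} = \abs{h}^{3/p-1}$.

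No real obstacle is anticipated: the proof is a direct dyadic splitting. The only mild care is in tracking the constants so that the final dependence on $p$ takes the form $[(3-p)(2p-3)]^{-1}$ rather than the (slightly sharper but less convenient) $[(3-p)^{-1/p} + (2p-3)^{-1/p}]$ coming naturally out of the estimates; the identity above is the key one-liner that makes this packaging possible.
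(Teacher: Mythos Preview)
Your proof is correct and essentially identical to the paper's: both split at radius $2\abs{h}$, use the pointwise bound $\abs{E}\leq C/\abs{x}$ on the near region and the mean value inequality with $\abs{\nabla E}\leq C/\abs{x}^{2}$ on the far region, and then package the resulting sum $(3-p)^{-1}+(2p-3)^{-1}$ into the product $[(3-p)(2p-3)]^{-1}$. The only cosmetic difference is that the paper bounds $\big[(2p-3)^{-1}+(3-p)^{-1}\big]^{1/p}$ directly by the sum itself (since the bracket is $\geq 1$ and $1/p<1$), whereas you first apply subadditivity and then bound $(3-p)^{1-1/p}$, $(2p-3)^{1-1/p}$; both routes are equally short.
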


\begin{proof}
Fix $h \in \R^3, h \neq 0$. Given $x \in \R^3$, we shall consider two cases. If $\abs{x} >2 \abs{h}$, then the segment joining $x$ and $x+h$ does not intersect the ball of radius $\abs{h}$ centered at the origin. Indeed, since $\abs{h} < \abs{x}/2$, for any $\theta \in [0,1]$ we have that 
$$ \abs{x+ \theta h} \geq \abs{x} - \abs{h} > \frac{\abs{x}}{2}. $$
Therefore, by Lagrange Theorem we estimate 
\begin{equation}
\abs{E(x+h)- E(x)} \leq C \abs{h} \sup_{\theta \in [0,1]} \abs{\nabla E(x+ \theta h)} \leq C \abs{h} \sup_{\theta \in [0,1]} \frac{1}{\abs{x+ \theta h}^2} \leq C \frac{\abs{h}}{\abs{x}^2}. \nonumber
\end{equation}
Then, given $p > 3/2$, we infer that 
\begin{align}
\int_{\abs{x} >2 \abs{h}} \abs{E(x+h) -E(x)}^p \di x  & \leq C_1^p \int_{\abs{x}>2 \abs{h}} \frac{\abs{h}^p}{\abs{x}^{2p}} \di x   = C_1^p \abs{h}^p \int_{2 \abs{h}}^{+\infty} r^{2-2p} \di r
\\ & \leq \frac{C_1^p}{2p -3} \abs{h}^{3-p}.  
\end{align}
If $\abs{x} \leq 2\abs{h}$, since the function $s \mapsto s^p$ is convex, we obtain that 
\begin{equation}
\abs{E(x+h) - E(x)}^p \leq 2^{p-1} \left[ \abs{E(x+h)}^p + \abs{E(x)}^p \right] \leq C_2^p \left[ \frac{1}{\abs{x+h}^{p}} + \frac{1}{\abs{x}^p}\right] . \nonumber
\end{equation}
Therefore, for $p < 3$ we have that 
\begin{align}
\int_{\abs{x} \leq 2 \abs{h}} \abs{E(x+h) - E(x)}^p \di x  & \leq C_2^p \int_{\abs{x} \leq 2 \abs{h}} \left[ \frac{1}{\abs{x+h}^{p}} + \frac{1}{\abs{x}^p}\right] \di x  \leq C_2^p \int_{\abs{x}\leq 3 \abs{h}} \frac{1}{\abs{x}^p} \di x 
\\ & \leq C_2^p \int_{0}^{3 \abs{h}} r^{2-p}\di r  \leq \frac{C_2^p}{3-p} \abs{h}^{3-p}.  
\end{align}
To summarize, for any $p \in (3/2, 3)$ we have that
\begin{align} 
\left[\int_{\R^3} \abs{E(x+h) -E(x)}^p \di x  \right]^{\frac{1}{p}} & \leq C_3 \abs{h}^{\frac{3}{p} -1} \left[ \frac{1}{2p-3} + \frac{1}{3-p} \right]^{\frac{1}{p}} \leq \frac{C_3}{(2p-3)(3-p)} \abs{h}^{\frac{3}{p} -1}. 
\end{align} 
\end{proof}

The proof of \cref{regularity of velocity} is based on the knowledge of the explicit constant in \eqref{estimate translations}. 

\begin{proof} [Proof of \cref{regularity of velocity}]
Fix $x,y \in \R^3$ and, since $\Phi \in L^\Theta$, by H\"older's inequality for $p \in (1,3)$ we have that 
\begin{equation}
\int_{\R^3} \abs{ E(x-z) - E(y-z)} \abs{\Phi(z)} \di z \leq \norm{\tau_{x-y} E -E}_{L^{p'}_x} \norm{\Phi}_{L^p_x} \leq \norm{\tau_{x-y} E -E}_{L^{p'}_x} \Theta(p), \label{regularity of velocity est 1}
\end{equation}
where $p'$ is the conjugate exponent of $p$. Since $p \in (3/2,3)$ if and only if $p'$ is in the same range, by means of \cref{translations} we estimate the right hand side of \eqref{regularity of velocity est 1} for $p \in (3/2,3)$:
\begin{align}
\int_{\R^3} \abs{ E(x-z) - E(y-z)} \abs{\Phi(z)} \di z & \leq \frac{C \Theta(p)}{(3-p')(2p'-3)} \abs{x-y}^{\frac{3}{p'} -1}  = \frac{C (p-1)^2 \Theta(p)}{(2p-3)(3-p)} \abs{x-y}^{2-\frac{3}{p}}.  
\end{align}
Since we are interested in the behaviour for $p \uparrow 3$, for any $p \in [2,3)$ we have that 
\begin{equation}
\int_{\R^3} \abs{ E(x-z) - E(y-z)} \abs{\Phi(z)} \di z \leq \frac{C \Theta(p)}{3-p} \abs{x-y}^{2- \frac{3}{p}}. \label{difference 1}
\end{equation}
For $x,y$ such that $\abs{x-y}\leq 1$, we can choose $p \in [2,3)$ such that 
$$\frac{1}{3-p} = 1- \log{\abs{x-y}}. $$ 
With this choice of $p$, we find that 
\begin{align}
\int_{\R^3} \abs{ E(x-z) - E(y-z)} \abs{\Phi(z)} \di z & \leq C (1-\log\abs{x-y}) \Theta\left( \frac{2-3 \log{\abs{x-y}}}{1- \log\abs{x-y}} \right) \abs{x-y}^{1 - \frac{1}{2-3\log \abs{x-y}}} 
\\ & \leq C \omega_\Theta(\abs{x-y}),
\end{align}
since the function 
$$r^{\frac{1}{3\log(r)-2}} = \exp \left( \frac{\log(r) }{3\log(r)-2} \right) $$
is uniformly bounded for $r \in (0, 1)$. This proves \eqref{increment} for $\abs{x-y}\leq 1$. For $x,y$ such that $\abs{x-y}\geq 1$, we can use the $L^\infty$ bound. Indeed, recalling that $E_1 \in L^2(\R^3)$ and $E_2 \in L^\infty(\R^3)$ (see \cref{summability of velocity}), for any $x \in \R^3$ we have that 
\begin{align} 
\norm{E*\Phi}
\abs{E} * \abs{\Phi}(x) & \leq \abs{E_1}* \abs{\Phi} (x) + \abs{E_2} * \abs{\Phi} (x) \nonumber
\\ & \leq \left[ \norm{E_1}_{L^\infty_x} \norm{\Phi}_{L^1_x}  + \norm{E_2}_{L^2_x} \norm{\Phi}_{L^2_x} \right] \leq C \norm{\Phi}_{L^\Theta_x}, \nonumber
\end{align}
thus proving \eqref{L^infty bound} and \eqref{increment} for $x,y \in \R^3$ such that $\abs{x-y} >1$. 
\end{proof}

\begin{rem} \label{continuity by CZ}
Notice that a similar estimate for the modulus of continuity \eqref{increment} can be achieved since the second derivative of the Oseen tensor $E$ is a Calderon--Zygmund operator (see \eqref{equibound 4}). Using the optimal constants in the Sobolev embedding and in the Calderon--Zygmund estimate, the interested reader can check that
\begin{equation}
    \abs{E*\Phi(x) -E*\Phi(y)} \leq C \omega_\Theta(\abs{x-y}). \nonumber
\end{equation}
However, in the proof of \cref{estimate for the relative distance}, the stronger inequality \eqref{increment} is needed. Moreover, our proof is completely elementary. 
\end{rem}

\subsection{Estimate on the Wasserstein distance} 

We discuss the proof of \cref{estimate for the relative distance}. 

\begin{proof} [Proof of \cref{estimate for the relative distance}]
By \eqref{L^1 norm of f}, we recall $\norm{\rho_i(t)}_{L^\Theta_x} = \norm{\rho_0^i}_{L^\Theta_x}$ for any $t \in [0,T]$ and $i=1,2$. Hence, for simplicity, we may assume that $\norm{\rho_0^i}_{L^1_x} = 1$. Letting $u^i= E*(- e_3 \rho^i)$ be the velocity field associated to $\rho^i$, by \eqref{ODE}, for a.e.\ $x,y \in \R^3$ and for any $t\in [0,T]$ we have that  
\begin{equation} \label{part 0}
    \begin{split}
        \abs{ X^1(t,0,x) - X^2(t,0,T(x)) } & \leq \abs{x-T(x)} + \int_0^t \abs{ u^1(s,X^1(s,0,x)) - u^2(s, X^2(s,0,T(x)))} \di s 
\\ & \leq \abs{x-T(x)} + \int_0^t \abs{ u^1(s,X^1(s,0,x)) - u^1(s, X^2(s,0,T(x)))} \di s 
\\ & \quad + \int_0^t \abs{u^1(s,X^2(s,0,T(x))) - u^2(s, X^2(s,0,T(x)))} \di s. 
    \end{split}
\end{equation}
Using \cref{regularity of velocity} we estimate the second term. Indeed, there exists a constant $C>0$ depending only on $\norm{\rho_0^1}_{L^\Theta_x}$ such that for a.e.\ $x,y \in \R^3$ and for any $t \in [0,T]$ we have 
\begin{equation}
\int_0^t \abs{u^1(s, X^1(s,0,x)) - u^1(s, X^2(s,0,T(x)))}\di s \leq C \int_0^t  \omega_\Theta(\abs{X^1(s,0,x) - X^2(s,0,T(x))}) \di s. \nonumber
\end{equation} 
Integrating with respect to the probability measure $\rho_0^1 \mathcal{L}^3$ and using Jensen's inequality (recall that $\omega_\Theta$ is concave), we have that 
\begin{equation} \label{part 1}
    \begin{split}
      \int_{\R^3} \int_0^t & \abs{u^1(s, X^1(s,0,x)) - u^1(s, X^2(s,0,T(x)))} \rho_0^1(x)\di s \di x  
    \\ &  \leq C \int_0^t \int_{\R^3} \omega_\Theta(\abs{X^1(s,0,x) - X^2(s,0,T(x))}) \rho_0^1(x) \di x \di s  
    \\ & \leq C \int_0^t \omega_\Theta \left( \int_{\R^3} \abs{X^1(s,0,x) - X^2(s,0,T(x))} \rho_0^1(x)\di x  \right) \di s = C \int_0^t Q(s) \di s.
    \end{split}
\end{equation}
To estimate the third term in \eqref{part 0}, notice that by \eqref{push forward} we have that 
\begin{align}
\int_{\R^3} & \abs{u^1(s,X^2(s,0,T(x))) - u^2(s, X^2(s,0,T(x)))} \rho_0^1(x) \di x  
\\ & = \int_{\R^3} \abs{u^1(s, X^2(s,0,x)) -u^2(s, X^2(s,0,x)) } \rho_0^2(x) \di x.
\end{align}
Thus, writing the convolution, using \eqref{representation of solution}, \eqref{push forward} and the push-forward formula, we find
\begin{align}
& \abs{ u^1(s, X^2(s,0,x))  - u^2(s,0, X^2(s,0,x))} 
\\ & \quad =  \abs{ E*(- e_3 \rho^1) (s, X^2(s,0,x)) - E*(- e_3 \rho^2) (s, X^2(s,0,x)) }
\\ & \quad = \bigg| \int_{\R^3}  E(X^2(s,0,x) - y) (- e_3 \rho^1(s,y) + e_3\rho^2(s,y) ) \di y \bigg|
\\ & \quad = \bigg| \int_{\R^3}  E(X^2(s,0,x) - X^1(s,0,y)) \rho_0^1(y)\di y - \int_{\R^3}  E(X^2(s,0,x) - X^2(s,0,y)) \rho_0^2(y) \di y \bigg|
\\ & \quad \leq \int_{\R^3} \abs{  E(X^2(s,0, x) - X^1(s,0,y)) -  E(X^2(s,0, x) - X^2(s,0,T(y))) } \rho_0^1(y) \di y.
\end{align} 
Therefore, by Fubini's Theorem and using again the push-forward formula, we infer that 
\begin{align}
\int_{\R^3} & \abs{u^1(s, X^2(s,0, x)) - u^2(s, X^2(s,0,x))} \rho_0^2(x) \di x  
\\ & \leq \int_{\R^3} \left[ \int_{\R^3}  \abs{ E(X^2(s,0,x) - X^1(s,0,y)) - E(X^2(s,0,x) - X^2(s,0,T(y)))} \rho_0^2(x) \di x  \right] \rho_0^1 (y) \di y 
\\ & = \int_{\R^3} \left[ \int_{\R^3}  \abs{ E(x - X^1(s,0,y)) - E(x - X^2(s,0,T(y)))} \, \rho^2(s,x) \right] \rho_0^1(y) \di y.
\end{align}
By \cref{regularity of velocity} and Jensen's inequality, we find a constant $C = C(\norm{\rho_0^2}_{L^\Theta_x})>0$ such that 
\begin{equation} \label{part 2} 
    \begin{split}
        \int_0^t \int_{\R^3} & \abs{u^1(s, X^2(s,0,T(x))) - u^2(s, X^2(s,0,T(x)))}  \rho_0^1(x) \di x  \di s 
\\ & \leq \int_0^t \int_{\R^3}  \int_{\R^3}  \abs{ E(x - X^1(s,0,y)) - E(x - X^2(s,0,T(y)))} \rho^2(s,x) \di x   \, \rho_0^1(y) \di y \di s 
\\ & \leq C  \int_0^t \int_{\R^3} \omega_\Theta(\abs{X^1(s,0,y) - X^2(s,0,T(y))}) \rho_0^1(y) \di y \di s 
\\ & \leq C \int_0^t \omega_\Theta \left(\int_{\R^3} \abs{X^1(s,0,y) - X^2(s,0,T(y))} \rho^1_0(y)\di y \right) \di s = C \int_0^t \omega_\Theta (Q(s)) \di s. 
    \end{split}
\end{equation}

Finally, combining \eqref{minimality}, \eqref{part 1} and \eqref{part 2}, we obtain \eqref{dis_bella}.
\end{proof}

\begin{rem}
The proofs of \cref{translations}, \cref{regularity of velocity} and \cref{estimate for the relative distance} hold true without any modification for general kernel $\Tilde{E}$ that is smooth out of the origin and such that 
\begin{equation}
\abs{\Tilde{E}(x)} \leq \frac{C}{\abs{x}} \ \ \ \abs{\nabla \Tilde{E}(x)} \leq \frac{C}{\abs{x}^2}. 
\end{equation}
We also remark that the strong inequality \eqref{increment} is really needed (see also \cref{continuity by CZ}).
\end{rem}

\section{Building explicit examples} \label{s:building explicit example}

Before discussing the proofs of \cref{symmetry regime} and \cref{explicit example}, we fix some notation.

\begin{Def} \label{cylinders and rotation}
Given $\theta \in \R$, let $R_\theta\colon \R^3 \to \R^3$ be the counter-clockwise rotation of angle $\theta$ along the axis $\Span(e_3)$, namely 
\begin{equation}
R_\theta(x) = \left( \begin{array}{cc} 
     x_1 \cos \theta - x_2\sin \theta
     \\ x_1 \sin\theta + x_2 \cos \theta 
     \\ x_3
\end{array}
\right) \ \ \ x\in \R^3 \nonumber .
\end{equation}
For any $\delta>0$, we denote by $\C_\delta$ the infinite cylinder of size $\delta$ around $\Span(e_3)$, namely
$$\C_{\delta} = \{(y,z) \in \R^2 \times \R \ \colon  \ \abs{y} \leq \delta \}. $$
\end{Def}

\subsection{Axisymmetric invariance} The proof of \cref{symmetry regime} relies on a direct computation. 

\begin{lem} \label{rotated velocity field}
Let $\rho \in L^1(\R^3)$ and let $u = E*(-e_3 \rho)$, where $E$ is the Oseen tensor \eqref{kernel}. Letting $\Tilde{\rho}(x) = \rho(R_\theta(x))$ and $\Tilde{u} = E*(-e_3 \Tilde{\rho} )$, we have that 
\begin{equation}
    \Tilde{u}(x) = R_{-\theta}( u( R_\theta(x))). \label{rotation of velocity}
\end{equation}
\end{lem}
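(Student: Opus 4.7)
The plan is to directly compute $\tilde u(x)$ from the convolution definition, perform a rotational change of variable, and then exploit two pieces of structure: the rotational equivariance of the Oseen tensor and the fact that $R_\theta$ is precisely the rotation fixing $e_3$.

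First I would record the key identity for $E$: for any rotation $R$ and any $x \neq 0$,
\begin{equation}
E(Rx) = R\, E(x)\, R^T, \nonumber
\end{equation}
which follows from \eqref{kernel} because $|Rx|=|x|$, $\mathrm{Id}=R R^T$, and $Rx\otimes Rx = R(x\otimes x)R^T$. In particular for $R=R_\theta$, using $R_\theta^T=R_{-\theta}$, one gets $E(R_\theta w)=R_\theta E(w) R_{-\theta}$, equivalently
\begin{equation}
E(w) = R_{-\theta}\, E(R_\theta w)\, R_\theta. \nonumber
\end{equation}

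Next I would write
\begin{equation}
\tilde u(x) = \int_{\R^3} E(x-y)\,(-e_3)\,\rho(R_\theta y)\,dy \nonumber
\end{equation}
and change variables $z=R_\theta y$ (with unit Jacobian), so that $y = R_{-\theta}z$ and $x-y = x-R_{-\theta}z$. Setting $w:=x-R_{-\theta}z$ we have $R_\theta w = R_\theta x - z$, hence by the identity above
\begin{equation}
E(x-R_{-\theta}z) = R_{-\theta}\, E(R_\theta x - z)\, R_\theta. \nonumber
\end{equation}
Substituting back yields
\begin{equation}
\tilde u(x) = R_{-\theta}\int_{\R^3} E(R_\theta x - z)\, R_\theta(-e_3)\,\rho(z)\,dz. \nonumber
\end{equation}
Since the rotation $R_\theta$ fixes the axis $\mathrm{Span}(e_3)$ we have $R_\theta e_3 = e_3$, so $R_\theta(-e_3)=-e_3$, and therefore
\begin{equation}
\tilde u(x) = R_{-\theta}\int_{\R^3} E(R_\theta x - z)\,(-e_3)\,\rho(z)\,dz = R_{-\theta}\, u(R_\theta x), \nonumber
\end{equation}
which is \eqref{rotation of velocity}.

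There is no real obstacle: the only subtlety is keeping track of the two places where the rotation acts, namely as an equivariance on the tensorial kernel (producing the outer $R_{-\theta}$) and as a symmetry fixing the gravity direction $-e_3$ (making the factor $R_\theta(-e_3)$ collapse back to $-e_3$). The integrability needed to justify Fubini and the change of variables is immediate from $\rho\in L^1$ together with $E\in L^1_{\mathrm{loc}}+L^\infty$ as in \cref{summability of velocity}.
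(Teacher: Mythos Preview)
Your proof is correct and follows essentially the same approach as the paper: a rotational change of variables in the convolution, combined with the equivariance of the Oseen tensor under rotations about $e_3$. The only cosmetic difference is that you state the full matrix identity $E(Rx)=R\,E(x)\,R^T$ and then use $R_\theta e_3=e_3$ separately, whereas the paper folds these together by computing directly on the third column, showing $[E(R_\theta x)]^3=R_\theta([E(x)]^3)$; the content is the same.
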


\begin{proof} 
Given a $A \in \R^{3 \times 3}$, we denote by $[A]^3$ the third column of the matrix $A$. Thus, a simple computation shows that 
\begin{equation}
[R_\theta(x \otimes x)]^3 = \left( \begin{array}{cc} 
     x_1 x_3 \cos \theta - x_2 x_3\sin \theta
     \\ x_1 x_3 \sin\theta + x_2 x_3 \cos \theta 
     \\ x_3^2 
\end{array}
\right) = [R_\theta(x) \otimes R_\theta(x)]^3 \ \ \ x\in \R^3 \nonumber,
\end{equation}
yielding 
\begin{equation}
    [ E(R_\theta(x))]^3 = R_\theta([E(x)]^3). \nonumber
\end{equation}
Therefore, since $R_\theta$ is an isometry of $\R^3$, for a.e.\ $ x \in \R^3$, we have that 
\begin{align}
    \Tilde{u}(x) & = - \int_{\R^3} [E(x-y)]^3 \rho( R_\theta(y) )\di y  = -\int_{\R^3} [E(x- R_{-\theta}(z))]^3 \rho(z)\di z 
    \\ & = -\int_{\R^3} [E(R_{-\theta}( R_\theta(x)-z))]^3 \rho(z)\di z  = - R_{-\theta} \int_{\R^3} [E (R_\theta(x) -z)]^3 \rho(z)\di z 
    \\ & = R_{-\theta} ( u(R_\theta(x))). 
\end{align}
\end{proof}

\begin{prop} \label{rotated solution}
Let $\rho \in L^\infty([0,T];L^1(\R^3))$ be a Lagrangian solution to \eqref{transport-stokes} according to \cref{Lagrangian solution} with initial datum $\rho_0 \in L^1(\R^3)$. Given an angle $\theta$, we set $\Tilde{\rho}(t,x) = \rho (t, R_\theta(x))$. Then $\Tilde{\rho} \in L^\infty([0,T]; L^1(\R^3))$ is a Lagrangian solution to \eqref{transport-stokes} according to \cref{Lagrangian solution} with initial datum $\Tilde{\rho}_0(x) = R_\theta(\rho_0(x))$. 
\end{prop}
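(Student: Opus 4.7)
The statement is essentially a change-of-variables verification, and Lemma \ref{rotated velocity field} already does the hard work at the PDE level. My plan is to construct the flow for $\tilde\rho$ directly by conjugating the given flow by $R_\theta$, and then check all the requirements in Definition \ref{Lagrangian solution}.

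First, let $X$ be the regular Lagrangian flow associated to $u = E*(-e_3\rho)$, given by Lemma \ref{quantitative RLF bis} and \cref{Lagrangian solution}. Define
\begin{equation}
\tilde X(t,s,x) := R_{-\theta}\bigl(X(t,s,R_\theta(x))\bigr),\qquad t,s\in[0,T],\ x\in\R^3. \nonumber
\end{equation}
By Lemma \ref{rotated velocity field} applied at each time $t$, the velocity field generated by $\tilde\rho(t,\cdot)$ is $\tilde u(t,x)=R_{-\theta}(u(t,R_\theta(x)))$, so $\tilde u = E*(-e_3\tilde\rho)$. Also, since $\tilde\rho$ is obtained from $\rho$ by the measure-preserving transformation $R_\theta$, $\tilde\rho\in L^\infty([0,T];L^1(\R^3))$ with $\|\tilde\rho(t)\|_{L^1_x}=\|\rho(t)\|_{L^1_x}$.

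The three requirements of Definition \ref{regular lagrangian flow} are then routine. Measure preservation of $\tilde X(t,s,\cdot)$ follows from the fact that $R_\theta$ and $R_{-\theta}$ are isometries (hence measure-preserving) and that $X(t,s,\cdot)$ is measure preserving. For the ODE, for a.e.\ $x$ the curve $t\mapsto X(t,s,R_\theta(x))$ is absolutely continuous with derivative $u(t,X(t,s,R_\theta(x)))$, and applying the linear map $R_{-\theta}$ preserves absolute continuity. Differentiating,
\begin{equation}
\frac{d}{dt}\tilde X(t,s,x) = R_{-\theta}\bigl(u(t,X(t,s,R_\theta(x)))\bigr) = R_{-\theta}\bigl(u(t,R_\theta(\tilde X(t,s,x)))\bigr) = \tilde u(t,\tilde X(t,s,x)), \nonumber
\end{equation}
where the middle equality uses $X(t,s,R_\theta(x))=R_\theta(\tilde X(t,s,x))$ by construction. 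Integrability in $t$ of $\tilde u(\cdot,\tilde X(\cdot,s,x))$ transfers from that of $u(\cdot,X(\cdot,s,R_\theta(x)))$ because $|R_{-\theta}\cdot|=|\cdot|$. Hence $\tilde X$ is a regular Lagrangian flow associated to $\tilde u$, and by the uniqueness part of Lemma \ref{quantitative RLF bis} it is the one.

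It remains to verify the representation \eqref{representation of solution}. For $t\in[0,T]$ and a.e.\ $x\in\R^3$,
\begin{equation}
\tilde\rho(t,\tilde X(t,0,x)) = \rho(t,R_\theta(\tilde X(t,0,x))) = \rho(t,X(t,0,R_\theta(x))) = \rho_0(R_\theta(x)) = \tilde\rho_0(x), \nonumber
\end{equation}
using the definition of $\tilde\rho$, the identity $R_\theta\circ\tilde X(t,0,\cdot)=X(t,0,\cdot)\circ R_\theta$, and the Lagrangian property of $\rho$ (which holds for a.e.\ $x$, hence also for a.e.\ $R_\theta(x)$ since $R_\theta$ is measure-preserving). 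This gives the conclusion; no step is a real obstacle, the only subtlety being to use Lemma \ref{rotated velocity field} so that $\tilde u$ is genuinely the velocity associated to $\tilde\rho$ and not just an abstract rotated field.
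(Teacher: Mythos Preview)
Your proof is correct and follows essentially the same approach as the paper: both define $\tilde X$ by conjugating $X$ with the rotation $R_\theta$, invoke Lemma~\ref{rotated velocity field} to identify $\tilde u = E*(-e_3\tilde\rho)$, verify the regular Lagrangian flow properties, and check the representation formula via the chain of equalities $\tilde\rho(t,\tilde X(t,0,x)) = \rho(t,X(t,0,R_\theta(x))) = \rho_0(R_\theta(x)) = \tilde\rho_0(x)$. Your version is slightly more thorough in spelling out the integrability condition and invoking uniqueness of the flow, but the argument is the same.
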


\begin{proof}
Since $R_\theta$ is an isometry of $\R^3$, we have that $\Tilde{\rho} \in L^\infty([0,T]; L^1(\R^3))$ and $\Tilde{\rho}_0 \in L^1(\R^3)$. Let $X\colon [0,T]^2 \times \R^3 \to \R^3$ be the regular Lagrangian flow generated by the velocity field $u = E*(-e_3 \rho)$. We claim that $\Tilde{X}(t,0,x) = R_{-\theta} ( X(t,0, R_\theta(x)))$ is the regular Lagrangian flow generated by $\tilde{u} = E*(-e_3 \Tilde{\rho})$ starting at time $0$. Indeed, $\Tilde{X}(t,0, \cdot)$ is a measure preserving map for any $t$ and, for a.e.\ $x \in \R^3$, by \cref{rotated velocity field} we have that 
\begin{equation}
\begin{cases}
\Tilde{X}(0,0,x) = x, 
\\ \displaystyle \frac{d}{dt} \Tilde{X}(t,0,x) = R_{-\theta} ( u (t, X(t,0, R_\theta(x))) ) =  \Tilde{u} (t, \Tilde{X}(t,0,x))
\end{cases} \nonumber
\end{equation}
in the integral sense. To conclude, since $\rho$ is a Lagrangian solution, we have that 
\begin{align}
    \Tilde{\rho}(t, \Tilde{X}(t,0,x)) & = \rho(t, R_\theta(\Tilde{X}(t,0,x))) = \rho(t, X(t,0, R_\theta(x)))  = \rho_0(R_\theta(x)) = \Tilde{\rho}_0(x). 
\end{align}
\end{proof}

Thus, the proof of the first part of \cref{symmetry regime} follows immediately.

\begin{proof}[Proof of \cref{symmetry regime} - first part] 
Since $\rho_0$ is invariant under $R_\theta$, by \cref{rotated solution} $\rho^\theta(t,x) = \rho(t, R_\theta(x))$ is a Lagrangian solution to \eqref{transport-stokes} with initial datum $\rho_0$. By \cref{estimate on wasserstein}, for any $t \in [0,T]$ we infer that $\rho^\theta(t, \cdot) = \rho(t, \cdot)$ for a.e.\ $x \in \R^3$. 
\end{proof}

In the following lemmas, we prove basic properties of the velocity field and the flow associated to a Lagrangian solution in a Yudovich-type space. 

\begin{lem} \label{vertical axis is invariant}
Under the assumptions of \cref{symmetry regime}, the vertical axis $\Span(e_3)$ is invariant under the flow $X\colon[0,T]^2\times \R^3 \to \R^3$ generated by $u = E*(-e_3 \rho)$.
\end{lem}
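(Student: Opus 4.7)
The plan is to exploit the rotational symmetry of $\rho$ to show that $u(t,\cdot)$ is equivariant under $R_\theta$, deduce that $u(t,\cdot)$ is parallel to $e_3$ on the vertical axis, and finally invoke uniqueness of the classical flow to conclude that trajectories starting on the axis remain on the axis.

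\emph{Step 1 (Equivariance of the velocity field).} By the first part of \cref{symmetry regime}, which we have just proved, $\rho(t,\cdot)$ is invariant under $R_\theta$ for every $t \in [0,T]$. Applying \cref{rotated velocity field} with $\Tilde{\rho}(t,\cdot) = \rho(t, R_\theta \cdot) = \rho(t,\cdot)$, we obtain $\Tilde{u}(t,\cdot) = u(t,\cdot)$, that is
\begin{equation}
u(t,x) = R_{-\theta}\bigl(u(t, R_\theta x)\bigr) \ \ \ \text{for a.e.\ } x \in \R^3. \nonumber
\end{equation}
Since $\rho(t,\cdot) \in L^\Theta(\R^3)$, \cref{regularity of velocity} (together with \cref{the flow is classical}) gives $u(t,\cdot) \in C^{\omega_\Theta}(\R^3)$, so up to choosing the continuous representative the identity above holds for every $x \in \R^3$.

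\emph{Step 2 (Velocity parallel to $e_3$ on the axis).} For any $x \in \Span(e_3)$ one has $R_\theta x = x$, so
\begin{equation}
R_\theta\bigl(u(t,x)\bigr) = u(t,x). \nonumber
\end{equation}
Since $\theta \neq 2k\pi$ by assumption, the fixed-point set of $R_\theta$ is exactly $\Span(e_3)$, hence $u(t,x) \in \Span(e_3)$ for every $t \in [0,T]$ and every $x \in \Span(e_3)$. In other words, the first two components $u_1(t,x), u_2(t,x)$ vanish whenever $x_1=x_2=0$.

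\emph{Step 3 (Invariance of the axis under the flow).} By \cref{the flow is classical}, the regular Lagrangian flow $X$ associated to $u$ is a classical flow, so in particular trajectories are unique. Fix $s \in [0,T]$ and $x = (0,0,x_3) \in \Span(e_3)$. Consider the one-dimensional ODE
\begin{equation}
\dot{y}(t) = u_3(t,(0,0,y(t))), \ \ \ y(s) = x_3, \nonumber
\end{equation}
which admits a solution $y \in C^1([0,T])$ because $t \mapsto u_3(t,(0,0,\cdot))$ is bounded and has an Osgood modulus of continuity in space (it inherits the $C^{\omega_\Theta}$ regularity of $u$). The curve $\gamma(t) := (0,0,y(t))$ lies in $\Span(e_3)$ and, by Step 2, satisfies $\dot\gamma(t) = u(t,\gamma(t))$ with $\gamma(s) = x$. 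By uniqueness of the classical trajectories of $u$ we conclude $X(t,s,x) = \gamma(t) \in \Span(e_3)$ for every $t \in [0,T]$.

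\emph{Main obstacle.} The only non-routine point is that we must exhibit the continuous representative of $u$ and work with it pointwise on the (measure-zero) axis, and then invoke the \emph{classical} (pointwise) uniqueness of trajectories to promote the one-dimensional reduction on the axis to the conclusion about the three-dimensional flow. This is precisely what the Osgood regularity $u \in C^{\omega_\Theta}$ provided by \cref{regularity of velocity} together with \cref{the flow is classical} buys us.
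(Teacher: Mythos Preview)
Your proof is correct, but it takes a somewhat different route from the paper. The paper argues directly at the level of the flow: since $\rho$ is $R_\theta$-invariant, the proof of \cref{rotated solution} shows that $R_{-\theta}(X(t,s,R_\theta(\cdot)))$ is again a regular Lagrangian flow for $u$, and by classical uniqueness (\cref{the flow is classical}) it must coincide with $X$, giving the pointwise identity $X(t,s,x)=R_{-\theta}(X(t,s,R_\theta x))$; for $x\in\Span(e_3)$ this forces $X(t,s,x)$ to be a fixed point of $R_{-\theta}$, hence to lie on the axis. You instead first push the equivariance down to $u$, deduce that $u$ is tangent to $\Span(e_3)$, and then build a one-dimensional trajectory on the axis and invoke uniqueness of individual trajectories. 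Both arguments hinge on the same two ingredients (rotational symmetry and Osgood uniqueness), but the paper applies uniqueness once to the whole flow map, while you apply it trajectory-by-trajectory after an explicit reduction; the paper's version is a bit shorter and avoids having to solve the auxiliary scalar ODE, whereas your version makes the geometric picture (tangency of $u$ to the axis) more explicit --- indeed this is essentially the content of \cref{explicit computation velocity}.
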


\begin{proof}
Since the flow of the velocity field $u$ is pointwise defined (see \cref{the flow is classical}), by the proof of \cref{rotated solution}, we infer that 
\begin{equation}
    X(t,s,x) = R_{-\theta}(X(t, s,R_\theta(x))) \ \ \ (t,s,x)\in [0,T]^2\times \R^3. \nonumber
\end{equation}
If we restrict to $x \in \Span(e_3)$, we easily conclude that $X(t,s,x)$ is fixed by the rotation $R_{-\theta}$, that is $X(t,s,x) \in \Span(e_3)$ for any $t,s \in [0,T]$.     
\end{proof}

\begin{rem} \label{explicit computation velocity}
In the framework of \cref{symmetry regime}, given $x \in \Span(e_3)$, we can explicitly compute $u(t,x)$ in coordinates. Indeed, we have that 
\begin{equation}
    u(t,x) = - \int_{\R^3} [E(y-x)]^3 \rho(t,y)\di y = - \left( 
    \begin{array}{cc}
         E_{1,3} * \rho(t,x) 
         \\ E_{2,3} * \rho (t,x)
         \\ E_{3,3}* \rho(t,x)
    \end{array}
    \right).
\end{equation} 
Dropping the variable $t$ and using cylindrical coordinates we have that 
\begin{align}
    E_{1,3}*\rho(x) + i E_{2,3}*\rho(x) & = \int_{\R^3} \frac{(y_1+ iy_2) y_3 }{\abs{y}^3} \rho(x-y)\di y  
    \\ & = \int_{\R} z\di z \int_0^{+\infty} \frac{r^2}{(r^2 + z^2)^{\frac{3}{2}}} \di r  \int_0^{2\pi} e^{i\alpha} \rho(x_3-z, r, \alpha) \di \alpha, 
\end{align}
where $i$ is the imaginary unit. Fix $z \in \R$ and $r \in (0, +\infty)$ and denote by 
\begin{equation}
    I = \int_0^{2\pi} e^{i\alpha} \rho(x_3-z, r, \alpha) \di \alpha. 
\end{equation}
Since $\rho(x_3-z, r, \alpha) = \rho(x_3-z,r, \alpha+\theta)$, we have that 
\begin{align}
    I & = \int_0^{2\pi} e^{i(\alpha+\theta)} \rho(x_3-z, r, \alpha+ \theta) \di \alpha = e^{i\theta} \int_0^{2\pi} e^{i \alpha} \rho(x_3-z, r, \alpha) \di \alpha = e^{i\theta} I. 
\end{align}
Since $\theta \neq 2k \pi$ for $k \in \Z$, we conclude that $I=0$, yielding $u_1(t,x) = u_2(t,x) = 0$ whenever $x \in \Span(e_3)$. Finally, we remark that 
\begin{align}
    E_{3,3}*\rho(x) & = \int_{\R^3} \frac{1}{\abs{x-y}} \left( 1 + \frac{(x_3-y_3)^2}{\abs{x-y}^2}\right) \rho(y)\di y. 
\end{align}
Hence, if $\rho$ is nonnegative, then $u(t,x)$ points downward for $t \in [0,T]$ and $x \in \Span(e_3)$. 
\end{rem}

\begin{lem} \label{l:uniform limit}
Under the assumptions of \cref{symmetry regime}, the velocity field $u = E*(-e_3 \rho)$ satisfies
\begin{equation}
    \lim_{M \to +\infty} \sup_{t \in [0,T]} \sup_{\abs{x}\geq M} \abs{u(t,x)} = 0. \label{uniform limit of velocity}
\end{equation}
\end{lem}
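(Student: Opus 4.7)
\medskip
\noindent\textbf{Proof plan.} The statement is purely quantitative: we must show that the velocity $u=E*(-e_3\rho)$ vanishes uniformly in time as $|x|\to+\infty$. The key ingredients will be (i) a uniform bound on the first moment of $\rho(t,\cdot)$, and (ii) a uniform-in-$t$ tightness of $\rho(t,\cdot)$ in some $L^p$-norm with $p>3/2$.

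\medskip
\noindent\emph{Step 1: propagation of the first moment.} By \cref{regularity of velocity} and \cref{norms are preserved}, the $L^\Theta$-norm of $\rho(t,\cdot)$ is preserved, hence $u\in L^\infty([0,T]\times\R^3)$ with a bound depending only on $\norm{\rho_0}_{L^\Theta_x}$. Arguing as in the last remark of \cref{s:nonlinear superposition}, the push-forward formula and \eqref{ODE} yield the uniform bound
\begin{equation*}
M_1:=\sup_{t\in[0,T]}\int_{\R^3}\abs{x}\rho(t,x)\,\di x\leq \int_{\R^3}\abs{x}\rho_0(x)\,\di x+T\,\norm{u}_{L^\infty_{t,x}}\norm{\rho_0}_{L^1_x}<+\infty.
\end{equation*}

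\medskip
\noindent\emph{Step 2: splitting of the convolution.} Since $|E(z)|\leq C/|z|$, for $|x|\geq M\geq 2$ we decompose
\begin{equation*}
\abs{u(t,x)}\leq C\int_{A}\frac{\rho(t,y)}{\abs{x-y}}\di y+C\int_{B}\frac{\rho(t,y)}{\abs{x-y}}\di y+C\int_{C}\frac{\rho(t,y)}{\abs{x-y}}\di y,
\end{equation*}
with $A=\{\abs{y-x}\leq 1\}$, $B=\{\abs{y-x}>1,\ \abs{y}\leq M/2\}$, $C=\{\abs{y-x}>1,\ \abs{y}>M/2\}$. On $B$ one has $\abs{x-y}\geq M/2$, giving a contribution $\leq(2C/M)\norm{\rho_0}_{L^1_x}$. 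On $C$ the factor $1/|x-y|$ is bounded by $1$, and Chebyshev's inequality together with Step 1 yields $\int_{|y|>M/2}\rho(t,y)\,\di y\leq 2M_1/M$.

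\medskip
\noindent\emph{Step 3: the singular region $A$.} For $M\geq 2$ the ball $A$ is entirely contained in $\{|y|>M-1\geq M/2\}$. Pick any $p\in(3/2,3)$; then $1/\abs{x-y}\in L^{p'}(A)$ uniformly in $x$, and by Hölder
\begin{equation*}
\int_A\frac{\rho(t,y)}{\abs{x-y}}\di y\leq C_p\norm{\rho(t,\cdot)}_{L^p(\{\abs{y}>M/2\})}.
\end{equation*}
The main point is to show that the right-hand side tends to $0$ as $M\to+\infty$ uniformly in $t\in[0,T]$. Interpolating between $L^1$ and $L^q$ for some $q\in(p,3)$, we get
\begin{equation*}
\norm{\rho(t,\cdot)}_{L^p(\{\abs{y}>M/2\})}\leq\norm{\rho(t,\cdot)}_{L^1(\{\abs{y}>M/2\})}^{\theta}\norm{\rho(t,\cdot)}_{L^q_x}^{1-\theta}
\end{equation*}
with $\theta\in(0,1)$. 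The first factor is at most $(2M_1/M)^\theta$ by Step 1, while the second is bounded uniformly in $t$ by $\Theta(q)\norm{\rho_0}_{L^\Theta_x}$ thanks to \cref{norms are preserved}. Letting $M\to+\infty$ in the three estimates proves \eqref{uniform limit of velocity}.

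\medskip
\noindent The delicate step is Step 3: one needs enough integrability of $\rho$ near infinity to compensate for the $1/|x-y|$ singularity, and this is supplied precisely by the combination of preserved $L^\Theta$-norm and the Chebyshev estimate coming from the first moment. Neither symmetry nor the Osgood condition enters the argument; only the quantitative bounds of \cref{regularity of velocity} and the preservation properties of Lagrangian solutions are used.
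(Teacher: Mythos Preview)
Your proof is correct, but it takes a genuinely different route from the paper's argument. The paper splits the convolution into two regions $\{|y|\leq |x|/2\}$ and $\{|y|>|x|/2\}$; on the first region $|E(x-y)|\leq C/|x|$, while on the second it uses the decomposition $E=E_1+E_2$ and H\"older to reduce to $\|\rho(t)\|_{L^p(B_{|x|/2}^c)}$ for $p=1,2$. The crucial difference is how the tail of $\rho(t,\cdot)$ is controlled: the paper exploits directly the boundedness of $u$ and the measure-preserving property of the flow to obtain
\[
\norm{\rho(t)}_{L^p(B_R^c)}^p=\int_{X(0,t,B_R^c)}|\rho_0|^p\,\di z\leq\int_{B_{R-T\|u\|_{L^\infty_{t,x}}}^c}|\rho_0|^p\,\di z,
\]
so the $L^p$-tail of $\rho(t)$ is bounded by the $L^p$-tail of $\rho_0$, which vanishes as $R\to\infty$ simply because $\rho_0\in L^p$. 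Your argument instead goes through propagation of the first moment, Chebyshev, and interpolation between $L^1$ and $L^q$. This buys you an explicit polynomial rate $M^{-\theta}$ in the decay, whereas the paper's rate is governed by the (possibly slow) decay of $\|\rho_0\|_{L^p(B_R^c)}$. On the other hand, the paper's argument does not invoke the finite-first-moment assumption \eqref{finite first moment} at all---it uses only $\rho_0\in L^\Theta$ and the resulting boundedness of $u$---so the lemma actually holds under weaker hypotheses than you use. Both approaches are clean and your observation that neither symmetry nor the Osgood condition is needed is correct.
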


\begin{proof}
Fix $t \in [0,T]$ and $x\in \R^3$ such that $\abs{x}\geq 10$. By the decomposition of the Oseen tensor from \cref{summability of velocity}, we have that 
\begin{align}
    \abs{E* (-e_3 \rho)(t,x)} & \leq \int_{B_{\abs{x}/2}} \abs{E(x-y)} \abs{\rho(t,y)} \di y + \int_{B_{\abs{x}/2}^c} \abs{E(x-y)} \abs{\rho(t,y)} \di y
    \\ & \leq \norm{\rho(t)}_{L^1_x} \norm{E}_{L^\infty(B_{\abs{x}/2}^c)} + \norm{E_1}_{L^2_x}  \norm{\rho(t)}_{L^2(B_{\abs{x}/2}^c)} + \norm{E_2}_{L^\infty_x}  \norm{\rho(t)}_{L^1(B_{\abs{x}/2}^c)}.
\end{align}
Since the velocity field is bounded (see \cref{regularity of velocity}) and the corresponding flow is measure-preserving, for any $p \in [1, 3)$ we have that 
\begin{align}
    \norm{\rho(t)}_{L^p(B_{\abs{x}/2}^c)}^p = \int_{X(t, B_{\abs{x}/2}^c)^{-1}} \abs{\rho_0(x)}^p \di x  \leq \int_{B_{\abs{x}/2- T \norm{u}_{L^\infty_{t,x}}}} \abs{\rho_0(x)}^p \di x. 
\end{align}
Thus, we infer that 
\begin{equation}
    \abs{u(t,x)} \leq \norm{\rho_0}_{L^1_x} \norm{E}_{L^\infty(B_{\abs{x}/2}^c)} + \norm{E_1}_{L^2_x}  \norm{\rho_0}_{L^2(B_{\abs{x}/2- T \norm{u}_{L^\infty_{t,x}})}^c} + \norm{E_2}_{L^\infty_x}  \norm{\rho_0}_{L^1(B_{\abs{x}/2- T \norm{u}_{L^\infty_{t,x}}}^c)}. \nonumber 
\end{equation}
To conclude, given $\e>0$, there exists $M>0$ such that for $\abs{x}\geq M$ we have that 
$$ \norm{E}_{L^\infty(B_{\abs{x}/2}^c)} \leq \e, \ \ \  \norm{\rho_0}_{L^p(B_{\abs{x}/2- T \norm{u}_{L^\infty_{t,x}}}^c)} \leq \e \ \ \ p =1,2, $$
thus proving \eqref{uniform limit of velocity}. 
\end{proof}

\begin{lem} \label{invariance of cylinder}
Under the assumptions of \cref{symmetry regime}, for any $\delta>0$ there exists $\e>0$ such that for any $(y,z) \in \C_\e$ for any $t \in [0,T]$ there holds that $X(t,0,(y,z)) \in \C_\delta$. 
\end{lem}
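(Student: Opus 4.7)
The strategy is to reduce the invariance of the cylinder to a quantitative Osgood-continuous dependence of the flow on its initial condition. The key observation is that by \cref{vertical axis is invariant} the vertical axis $\Span(e_3)$ is invariant under $X(t,0,\cdot)$, so we only need to estimate how far trajectories starting near the axis can drift away from it, and this comparison can be carried out against a trajectory that stays on the axis.

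The plan is as follows. Fix $\delta>0$ and pick $(y,z)\in\C_\e$, with $\e>0$ to be chosen. Let $x_0=(0,0,z)\in\Span(e_3)$ and compare the two classically defined trajectories $t\mapsto X(t,0,(y,z))$ and $t\mapsto X(t,0,x_0)$, the latter remaining on $\Span(e_3)$ for all $t\in[0,T]$ by \cref{vertical axis is invariant}. Setting
\begin{equation}
d(t)=\abs{X(t,0,(y,z))-X(t,0,x_0)}, \nonumber
\end{equation}
the integral form \eqref{ODE} of the ODE, combined with \eqref{increment} from \cref{regularity of velocity} applied to $u(t,\cdot)=E\ast(-e_3\rho(t,\cdot))$ (note that $\rho(t,\cdot)\in L^\Theta$ uniformly in $t$ by \cref{norms are preserved}), yields
\begin{equation}
d(t)\leq \abs{y}+C\int_0^t \omega_\Theta(d(s))\di s, \qquad t\in[0,T], \nonumber
\end{equation}
with a constant $C$ depending only on $\norm{\rho_0}_{L^\Theta_x}$.

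The next step is to invoke the Osgood lemma, which applies because $\omega_\Theta$ is concave (hence non decreasing) and satisfies \eqref{osgood}. This furnishes a nondecreasing function $\Psi\colon[0,+\infty)\to[0,+\infty)$ with $\Psi(\sigma)\to 0$ as $\sigma\to 0$, such that $d(t)\leq \Psi(\abs{y})$ for every $t\in[0,T]$, where $\Psi$ depends only on $T$, $C$ and $\omega_\Theta$. The horizontal distance from $X(t,0,(y,z))$ to $\Span(e_3)$ is bounded above by $d(t)$, since $X(t,0,x_0)\in\Span(e_3)$, hence
\begin{equation}
\mathrm{dist}\bigl(X(t,0,(y,z)),\Span(e_3)\bigr)\leq \Psi(\abs{y})\leq \Psi(\e), \qquad t\in[0,T]. \nonumber
\end{equation}
Choosing $\e>0$ so small that $\Psi(\e)\leq \delta$ gives $X(t,0,(y,z))\in\C_\delta$ for all $t\in[0,T]$ and all $(y,z)\in\C_\e$, as required.

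The only nonroutine point is to have the Osgood estimate at our disposal in this nonlinear setting, but this is exactly what \cref{regularity of velocity} together with the hypotheses \eqref{convexity}, \eqref{osgood} guarantee, and the $L^\Theta$ bound on $\rho(t)$ that feeds the constant $C$ is propagated in time via \cref{norms are preserved}. No further property of the solution is needed, since the axisymmetric structure of $\rho$ only enters through \cref{vertical axis is invariant} to ensure that the comparison trajectory $X(\cdot,0,x_0)$ stays on $\Span(e_3)$.
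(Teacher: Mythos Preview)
Your proof is correct and, in fact, more direct than the paper's. The paper argues by contradiction: assuming a sequence $(y_\e,z_\e)\in\C_\e$ escapes $\C_{\delta_0}$ at some time $t_\e$, it first invokes \cref{l:uniform limit} (the uniform decay of $u$ at infinity) to force $z_\e$ into a bounded interval, then extracts a convergent subsequence $(y_\e,z_\e)\to(0,z_0)$ and reaches a contradiction via the uniform-in-time stability of the flow (\cref{RLF for osgood}, \cref{facts on classical RLF}) together with \cref{vertical axis is invariant}. Your argument instead compares each trajectory directly with the axis trajectory $X(\cdot,0,(0,0,z))$ and applies the Osgood--Bihari estimate to the scalar $d(t)$; this yields an explicit, $z$-independent bound $d(t)\le\Psi(\abs{y})$ and hence a constructive choice of $\e=\e(\delta)$. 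Your route avoids the compactness step and, notably, does not need \cref{l:uniform limit} at all, since uniformity in $z$ is automatic from the fact that the constant $C$ in the Osgood inequality depends only on $\norm{\rho_0}_{L^\Theta_x}$. Both approaches ultimately rest on the same ingredients (\cref{regularity of velocity}, the Osgood condition, and \cref{vertical axis is invariant}), but yours packages them more efficiently for this particular statement.
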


\begin{proof}
Assume by contradiction that there exists $\delta_0>0$ such that for any $\e\in \N$ there exists $(y_\e, z_\e) \in \C_\e$ and $t_\e \in [0,T]$ such that $X(t_\e, 0, (y_\e, z_\e)) \notin \C_{\delta_0}$. Since $u(t,x)\to 0$ uniformly as $\abs{x}\to +\infty$ uniformly in time (see \cref{l:uniform limit}), we find $M>0$ such that $z_\e \in [-M,M]$ for any $\e>0$. Thus, up to subsequences, we may assume that $(y_\e, z_\e) \to (0, z_{0})$ as $\e \to 0$, for some $z_0 \in [-M,M]$. By the stability of the flow with respect to the supremum norm (see \cref{RLF for osgood} and \cref{facts on classical RLF}), we infer that $X(\cdot, 0, (y_\e, z_\e)) \to X(\cdot, 0, (0, z_0))$ uniformly in $[0,T]$. However, since $\Span(e_3)$ is invariant under the flow (see \cref{vertical axis is invariant}), we deduce that $X(t,0,(0,z)) \in \Span(e_3)$ for any $t \in [0,T]$ while $X(t_\e, 0, (y_\e, z_\e)) \notin \C_{\delta_0}$ for any $\e>0$.  
\end{proof}

Finally, we conclude the proof of \cref{symmetry regime}. 

\begin{proof} [Proof of \cref{symmetry regime} - conclusion]
Let $\delta>0$ and $p \in [1, +\infty]$ be as in the statement. Let $\e>0$ be given by \cref{invariance of cylinder} such that $X(t, 0, \C_\e) \subset \C_\delta$ for any $t \in [0,T]$. Then, we infer that $X^{-1}(t,0, \C_\delta^c) \subset \C_\e^c$ for any $t \in [0,T]$. By \eqref{representation of solution} and \cref{facts on classical RLF}, we deduce that 
\begin{equation}
    \rho(t, (y,z)) = \rho_0(X^{-1}(t, 0, (y,z))) \label{representation formula tris}
\end{equation}
for any $t \in [0,T]$ and for a.e.\ $(y,z) \in \R^3$. Since $X(t,0, \cdot)$ is measure preserving, using the push-forward formula, if $p< +\infty$ we deduce that 
$$\norm{\rho(t)}_{L^p(\C_\delta^c)} \leq \norm{\rho_0}_{L^p(\C_\e^c)}. $$
Indeed, the same conclusion holds true if $p= + \infty$ by \eqref{representation formula tris}. 
\end{proof}

\subsection{An explicit computation}

The proof of \cref{explicit example} relies on an direct computation.

\begin{proof} [Proof of \cref{explicit example}]
Setting $\Theta(s) = 1- \log(3-s)$ and computing $\omega_\Theta$ by \eqref{modulus of continuity}, we have 
\begin{equation}
    \omega_\Theta(s) = \begin{cases}
    s(1-\log(s)) (1+ \log(1-\log(s))) & s \in (0,1),
    \\ 1 & s \in [1, +\infty). \nonumber
    \end{cases}
\end{equation}
Then, it is trivial to check that $\omega_\Theta$ satisfies the Osgood property. Moreover, computing the second derivative of $\omega_\Theta$ in $(0,1)$, we infer that $\omega_\Theta$ is a convex function. Indeed, we have that 
\begin{equation}
    \omega_\Theta(s)'' = -\frac{1}{s} \left[ 1+ \log\left( \frac{1-\log(s)}{s^2} \right) \right] \leq 0 \ \ \ \text{for } s \in (0,1). \nonumber
\end{equation}
Let $\rho_0$ be defined by \eqref{rho_0}. Using polar coordinates, we obtain that 
\begin{align}
    \norm{\rho_0}_{L^p_x}^p & = 4\pi \int_{0}^{1/e} \frac{r^{2-p}}{\abs{\log(r)}^{\frac{p}{3}}} \di r = 4 \pi \int_1^{+\infty} e^{-t(3-p)} t^{-\frac{p}{3}} \di t = 4\pi (3-p)^{-\frac{3-p}{3}} \int_{3-p}^{+\infty} e^{-z} z^{-\frac{p}{3}} \di z,
\end{align}
after the change of variables $r= e^{-t}$ and then $z=t(3-p)$. Notice that 
\begin{equation}
    4 \pi (3-p)^{-\frac{3-p}{3}} = 4 \pi \exp\left( -\frac{3-p}{3} \log(3-p) \right) \leq C \ \ \ \text{for } p \in [1,3), \nonumber
\end{equation}
since the function $q\log(q)$ is bounded around $0$. Then, we split the integral: 
\begin{align}
\int_{3-p}^{+\infty} e^{-z} z^{-p/3}\di z & \leq \int_{1}^{+\infty} e^{-z} \di z + \int_{3-p}^1 z^{-p/3} \di z  \leq 1 + \frac{1}{1-p/3} \left[ z^{1-p/3} \right]_{z=3-p}^{z=1} 
\\ & = 1 + \frac{3}{3-p} \left[ 1- (3-p)^{\frac{3-p}{3}} \right] = 1 + 3 \left[ \frac{1-\exp\left(\frac{3-p}{3}\log(3-p)\right)}{3-p}\right] 
\\ & \leq 1 -\log(3-p). 
\end{align}
Therefore, we conclude that 
\begin{equation}
    \norm{\rho_0}_{L^p_x} \leq C (1-\log(3-p)) \ \ \ \text{for } p \in [1, 3) \nonumber. 
\end{equation}
On the other hand, $\rho_0 \notin L^3$. Indeed, we have that 
\begin{equation}
    \int_0^{1/e} \frac{1}{\abs{x}^3 \abs{\log{x}}}\di x  = 4\pi \int_0^{1/e} \frac{1}{r \abs{\log(r)}} \ dr = +\infty. \nonumber
\end{equation}
Let $\rho$ be any Lagrangian solution to \eqref{transport-stokes} with initial condition $\rho_0$. By \eqref{L^1 norm of f}, we deduce that $\rho \in L^\infty([0,T];L^\Theta(\R^3))$, where $\Theta(s) =  1- \log(3-s)$ and $\rho(t)\notin L^3$ for a.e.\ $t \in [0,T]$.  

Let $\rho_1$ be defined by \eqref{tilde rho_0}. With the very same argument as before, it is easy to check that 
\begin{equation}
    \norm{\rho_1}_{L^p(\R^2)} \leq C (1-\log(3-p)) \ \ \ \text{for } p \in [1, 3) \nonumber,
\end{equation}
for a suitable explicit constant $C>0$. Then, for any function $\rho_2 \in L^1 \cap L^3(\R)$, we infer that $\Tilde{\rho}_0(x) = \rho_1(x_1,x_2) \rho_2(x_3)$ satisfies 
\begin{equation}
    \norm{\Tilde{\rho}_0}_{L^p(\R^3)} \leq C (1-\log(3-p)) \ \ \ \text{for } p \in [1, 3) \nonumber,
\end{equation}
where the constant $C>0$ depends also on $\rho_2$. Moreover, $\Tilde{\rho}_0 \notin L^3(\R^3)$, since $\rho_1 \notin L^3(\R^2)$. As before, we deduce that the corresponding Lagrangian solution $\rho$ with initial datum $\Tilde{\rho}_0$ lies in $ L^\infty([0,T]; L^\Theta(\R^3))$, but $\rho(t) \notin L^3$ for any $t \in [0,T]$. 
\end{proof}

\appendix

\section{Quantitative estimates for the regular Lagrangian flow} \label{s:stability estimate}

Apriori estimates play a pivotal role within the theory of regular Lagrangian flows. In this section, we show the basic results needed to prove \cref{quantitative RLF bis}, following \cites{BC13, CDL08}. We present a quantitative estimate for the regular Lagrangian flow associated to a vector field in $L^\infty(W^{1,\infty})+L^\infty(W^{1,p})$ for some $p \in (1,+\infty)$, such as the velocity field arising in \eqref{transport-stokes}. We achieve this estimate by adding a bounded Lipschitz perturbation to the Sobolev case studied in \cite{CDL08}.

\begin{thm} \label{quantitative RLF basic} 
Fix $p\in (1, +\infty)$ and let $b^1, b^2$ be vector fields with the following properties: 
\begin{itemize}
\item $b^i= b^i_1 + b^i_2$, where $b^i_1 \in L^1([0,T];L^1(\R^d))$ and $b^i_2 \in L^1([0,T]; L^\infty(\R^d))$ for $i=i,2$; 
\item $Db^1_1 \in L^1([0,T]; L^p(\R^d))$, $Db^1_2 \in L^1([0,T]; L^\infty(\R^d))$; 
\item $b^1, b^2$ are incompressible vector fields. 
\end{itemize}
Let $X^1, X^2$ be regular Lagrangian flows associated to $b^1, b^2$ respectively according to \cref{regular lagrangian flow}. Given $\delta>0, R>0$, we define the quantity
\begin{equation}
g_{\delta,R}(t,s) = \int_{B_R} \log\left( \frac{\abs{X^1(t,s,x) - X^2(t,s,x)}}{\delta} + 1 \right) \di x , \quad (t,s) \in [0,T]^2. \label{g_R,delta}
\end{equation}
Then, for any $R,\delta>0$ and for any $\lambda > R+\norm{b_2^2}_{L^1_t L^\infty_x }$ there holds that 
\begin{equation} \label{a priori est RLF basic}
    \begin{split}
        \sup_{t,s \in [0,T] }g_{\delta,R}(t,s) & \leq c_{d,p,R} \norm{Db^1_1}_{L^1_t L^p_x } + c_{d,R} \norm{Db_2^1}_{L^1_t L^\infty_x} +  \frac{1}{\delta} \norm{b^1_1 - b_1^2}_{L^1_t L^1_x} 
\\ & \quad + \frac{1}{\delta}\norm{b^1_2 -b^2_2}_{L^1_t L^1(B_\lambda)} + \frac{1}{\delta} \frac{\norm{b^1_2}_{L^1_t L^\infty_x} + \norm{b^2_2}_{L^1_t L^\infty_x}}{\lambda-R-\norm{b_2^2}_{L^1_t L^\infty_x}} \norm{b_2^1}_{L^1_t L^1_x} . 
    \end{split}
\end{equation}
\end{thm}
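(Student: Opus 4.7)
The plan is to adapt the Crippa--De Lellis log-functional technique from \cite{CDL08} to the present setting in which the velocity field decomposes as a Sobolev part plus a bounded Lipschitz part. For fixed $s,\delta,R$, I would note that $g_{\delta,R}(s,s)=0$ and, by standard arguments, that $t\mapsto g_{\delta,R}(t,s)$ is absolutely continuous with
\begin{equation*}
\frac{d}{dt}g_{\delta,R}(t,s) \leq \int_{B_R} \frac{\abs{b^1(t,X^1(t,s,x))-b^2(t,X^2(t,s,x))}}{\delta+\abs{X^1(t,s,x)-X^2(t,s,x)}}\,dx.
\end{equation*}
Inserting $\pm b^1(t,X^2)$ inside the absolute value produces a ``same field, different points'' contribution and a ``same point, different fields'' contribution; each of these I would further split according to $b^i=b^i_1+b^i_2$.

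For the ``same field, different points'' contribution I would treat the Sobolev part via the pointwise inequality $\abs{f(x)-f(y)}\leq C_d\abs{x-y}(Mf(x)+Mf(y))$, valid for $f\in W^{1,p}$, which cancels $\abs{X^1-X^2}$ against the denominator. Measure preservation of $X^i(t,s,\cdot)$, H\"older on $B_R$ and $L^p$-boundedness of $M$ (whence the need $p>1$) integrate in time to $c_{d,p,R}\norm{Db^1_1}_{L^1_t L^p_x}$; the Lipschitz piece gives the trivial bound $c_{d,R}\norm{Db^1_2}_{L^1_t L^\infty_x}$. For the ``same point, different fields'' contribution I would use $\delta+\abs{X^1-X^2}\geq\delta$, so that the integrand is dominated by $\delta^{-1}\abs{b^1-b^2}(t,X^2)$. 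Measure preservation of $X^2$ applied to $\abs{b^1_1-b^2_1}$ directly yields $\delta^{-1}\norm{b^1_1-b^2_1}_{L^1_t L^1_x}$. For the $b_2$ difference I would split the domain by whether $X^2(\tau,s,x)\in B_\lambda$ or not: the in-cylinder portion contributes $\delta^{-1}\norm{b^1_2-b^2_2}_{L^1_t L^1(B_\lambda)}$, while the out-of-cylinder portion is handled by the escape estimate below.

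The decisive tool is a Chebyshev-type bound on how often $X^2$ leaves $B_\lambda$. From \eqref{ODE}, for $x\in B_R$ one has $\abs{X^2(\tau,s,x)}\leq R+\norm{b^2_2}_{L^1_t L^\infty_x}+\int_s^\tau\abs{b^2_1(z,X^2(z,s,x))}\,dz$, so the escape set is contained in $\{\int_s^\tau\abs{b^2_1(z,X^2)}\,dz\geq \lambda-R-\norm{b^2_2}_{L^1_t L^\infty_x}\}$. Markov's inequality combined with Fubini and measure preservation of $X^2(z,s,\cdot)$ gives the uniform-in-$\tau$ bound
\begin{equation*}
\meas^d\bigl(\set{x\in B_R : \abs{X^2(\tau,s,x)}\geq\lambda}\bigr)\leq \frac{\norm{b^2_1}_{L^1_t L^1_x}}{\lambda-R-\norm{b^2_2}_{L^1_t L^\infty_x}},
\end{equation*}
which, multiplied by $\norm{b^1_2(\tau)}_{L^\infty_x}+\norm{b^2_2(\tau)}_{L^\infty_x}$ and integrated in $\tau$, produces the last term of \eqref{a priori est RLF basic}. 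Summing all contributions, integrating from $s$ to $t$ using $g_{\delta,R}(s,s)=0$ and taking the supremum in $t,s\in[0,T]$ yields \eqref{a priori est RLF basic}. I expect the main obstacle to be precisely this escape estimate: since $b^1_2-b^2_2$ is controlled only in $L^1(B_\lambda)$ and $X^2$ is a priori allowed to leave any bounded set, the Chebyshev argument is essential, and the requirement that it be \emph{uniform} in $\tau\in[s,t]$ is what forces the condition $\lambda>R+\norm{b^2_2}_{L^1_t L^\infty_x}$ and couples the final term to the $L^1_t L^1_x$ norm of $b^2_1$.
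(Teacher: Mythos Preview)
Your proposal is correct and follows essentially the same route as the paper's own proof: differentiate $g_{\delta,R}$ in $t$, insert $\pm b^1(t,X^2)$, split each contribution into the $b_1$ and $b_2$ parts, handle the Sobolev piece via the maximal-function difference-quotient inequality and H\"older on $B_R$, handle the Lipschitz piece trivially, and control the out-of-cylinder portion of the $b^1_2-b^2_2$ term via the Markov/Chebyshev escape bound for $X^2$ (which the paper isolates as a separate lemma). Your identification of the escape estimate as the crux, and of the role of the constraint $\lambda>R+\norm{b^2_2}_{L^1_t L^\infty_x}$, matches the paper exactly; note incidentally that your escape bound correctly produces $\norm{b^2_1}_{L^1_t L^1_x}$, in agreement with the paper's proof (the $\norm{b^1_2}_{L^1_t L^1_x}$ appearing in the displayed statement is a typo).
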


\begin{rem}
\cref{quantitative RLF basic} holds without any regularity assumptions on the vector field $b^2$.  
\end{rem}

We refer to \cref{ss:proof of the apriori est} for a detailed proof of \cref{quantitative RLF basic}. In particular, we  obtain the following result. 

\begin{prop} \label{quantitative RLF} 
Let $b^1, b^2$ be vector fields as in \cref{quantitative RLF basic} and let $X^1, X^2$ be regular Lagrangian flows for $b^1, b^2$ respectively according to \cref{regular lagrangian flow}. Assume that
\begin{equation}
    \max \left\{ \norm{b^1_{1}}_{L^1_t L^1_x}, \norm{b^1_2}_{L^1_t L^\infty_x}, \norm{b^2_{1}}_{L^1_t L^1_x}, \norm{b^2_2}_{L^1_t L^\infty_x}, \norm{Db^1_1}_{L^1_t L^p_x}, \norm{D b^1_2}_{L^1_t L^\infty_x}, \right\} \leq \overline{K} < +\infty. \label{equibound of norms}
\end{equation}
For any $\e, R, \eta>0$ there exist a constant $C>0$ and radius $\lambda >0$ such that 
\begin{equation} \label{a priori est RLF}
    \begin{split}
        \sup_{t,s \in [0,T]} & \meas^d(\{ x \in B_R \ \colon  \ \abs{ X^1(t,s,x) - X^2(t,s,x)}> \e \}) 
    \\ & \quad \leq C \left[ \norm{b^1_1-b^2_1}_{L^1_t L^1_x} + \norm{b^1_2-b^2_2}_{L^1_t L^1( B_\lambda )} \right] + \eta. 
    \end{split}
\end{equation} 
The constant $C$ and the radius $\lambda$ depend only on $\eta, \e, p, d, R, \overline{K}$. 
\end{prop}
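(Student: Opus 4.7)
The plan is to deduce \eqref{a priori est RLF} from the a priori bound \eqref{a priori est RLF basic} of \cref{quantitative RLF basic} by combining it with a Chebyshev-type inequality, in the spirit of the Crippa--De Lellis approach of \cites{CDL08, BC13}. Under the equibound \eqref{equibound of norms}, every quantity on the right-hand side of \eqref{a priori est RLF basic} that does not involve the difference $b^1 - b^2$ is controlled by a constant depending only on $\overline{K}$ and the structural parameters $d,p,R$.

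First, I would apply Chebyshev to pass from $g_{\delta,R}$ to the measure of the superlevel set. For any $\delta>0$ and $\e>0$, since the logarithm is nondecreasing, the definition \eqref{g_R,delta} yields
$$\log\!\left(1+\frac{\e}{\delta}\right)\, \meas^d\!\left(\{x \in B_R : \abs{X^1(t,s,x)-X^2(t,s,x)} > \e\}\right) \leq g_{\delta,R}(t,s).$$
Inserting \eqref{a priori est RLF basic} and using \eqref{equibound of norms}, I obtain, for every $\lambda > R + \overline{K}$,
\begin{equation*}
\begin{split}
\meas^d(\{\cdots > \e\}) &\leq \frac{C_1(d,p,R,\overline{K})}{\log(1+\e/\delta)} + \frac{C_2(\overline{K})}{\delta\,\log(1+\e/\delta)\,(\lambda - R - \overline{K})} \\
&\quad + \frac{1}{\delta\,\log(1+\e/\delta)} \bigl[\norm{b^1_1 - b^2_1}_{L^1_t L^1_x} + \norm{b^1_2 - b^2_2}_{L^1_t L^1(B_\lambda)}\bigr],
\end{split}
\end{equation*}
uniformly in $t,s\in[0,T]$.

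The next step is to select the two free parameters $\delta$ and $\lambda$ in sequence. First, pick $\delta = \delta(\e,\eta,d,p,R,\overline{K}) > 0$ small enough that $C_1/\log(1+\e/\delta) \leq \eta/2$; this is possible because $\log(1+\e/\delta) \to +\infty$ as $\delta \downarrow 0$. With this $\delta$ now fixed, I would pick $\lambda = \lambda(\e,\eta,d,p,R,\overline{K}) > R + \overline{K}$ large enough that the second term is also at most $\eta/2$. Setting $C := [\delta\log(1+\e/\delta)]^{-1}$, which depends only on $\e, \eta, d, p, R, \overline{K}$, then gives the desired inequality \eqref{a priori est RLF}.

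The only mild obstacle, which is in fact handled inside \cref{quantitative RLF basic} itself, is that the bounded piece $b^i_2$ is only globally bounded in space and need not be integrable, so its contribution to the displacement of trajectories must be controlled by cutting off at a ball $B_\lambda$ and estimating the escape of trajectories into $B_\lambda^c$. This is precisely the role of the last term of \eqref{a priori est RLF basic}, and the sequential choice \textquotedblleft$\delta$ first, then $\lambda$\textquotedblright{} is what makes the superlinear factor $1/\delta$ multiplying that tail harmless once $\lambda$ is taken sufficiently large.
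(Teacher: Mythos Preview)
Your proposal is correct and follows essentially the same approach as the paper: apply Chebyshev to pass from $g_{\delta,R}$ to the superlevel set, insert the a priori bound \eqref{a priori est RLF basic} with all the non-difference terms controlled via \eqref{equibound of norms}, and then choose $\delta$ first to make the $C_1$-term at most $\eta/2$, followed by $\lambda$ large to make the tail term at most $\eta/2$. The paper's proof is identical in structure and in the sequential selection of $\delta$ and $\lambda$.
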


\begin{proof}
Fix $R>0, \e>0$. For any $\delta>0, \lambda > R+\overline{K}$, by \eqref{a priori est RLF basic} we have that 
\begin{align}
    \sup_{t,s \in [0,T]} & \meas^d( \{ x \in B_R \ \colon  \ \abs{X^1(t,s,x) - X^2(t,s,x)} > \e \} ) 
    \\ & \leq \left[\log \left( \frac{\e}{\delta} +1 \right)\right]^{-1} \sup_{t,s \in [0,T]} g_{\delta,R}(t,s) \leq \left[\log \left( \frac{\e}{\delta} +1 \right)\right]^{-1} c_{d,R,p} \overline{K} \nonumber
    \\ & \quad + \left[\log \left( \frac{\e}{\delta} +1 \right) \delta \right]^{-1} \left( \norm{b^1_1 - b_1^2}_{L^1_t L^1_x} + \norm{b^1_2-b^2_2}_{L^1_t L^1(B_\lambda)} + \frac{2 \overline{K}^2}{\lambda-R-\overline{K}} \right) 
\end{align}
Hence, we fix $\eta>0$ and we choose $\delta = \delta(\e, \eta, R,p, d, \overline{K})>0$ such that 
$$\left[\log \left( \frac{\e}{\delta} +1 \right)\right]^{-1} c_{d,R,p} \overline{K} \leq \frac{\eta}{2}. $$
Once $\delta$ is fixed, we choose $\lambda = \lambda(\e, \eta, \delta, R, d,p, \overline{K} ) > R +  \overline{K}$ such that
$$ \left[ \log \left( \frac{\e}{\delta} +1 \right) \delta \right]^{-1} \frac{2 \overline{K}^2}{\lambda - R - \overline{K}} \leq \frac{\eta}{2}, $$
yielding \eqref{a priori est RLF}
\end{proof}

As a byproduct of \cref{quantitative RLF}, by an approximation procedure, we get uniqueness \cite{BC13}*{Theorem 6.1}, stability \cite{BC13}*{Theorem 6.2} and existence \cite{BC13}*{Theorem 6.4} of the regular Lagrangian flow associated to a vector field with the same properties as $b^1$ in \cref{quantitative RLF}, as well as the semigroup property of the (complete) regular Lagrangian flow \cite{BC13}*{Corollary 6.6}. We left the straightforward modifications to the interested reader. It is enough to use the stability estimate of \cref{quantitative RLF} in place of \cite{BC13}*{Proposition 5.9}. 

\begin{thm} \label{well posedness of the flow}
Let $b = b_1+b_2\colon [0,T] \times \R^d \to \R^d$ be an incompressible vector field with the following properties: 
\begin{itemize}
    \item $b_1 \in L^1([0,T]; L^1(\R^d))$, $b_2 \in L^1([0,T]; L^\infty(\R^d))$; 
    \item $Db_1 \in L^1([0,T]; L^p(\R^d))$ for some $p \in (1, \infty)$, $Db_2 \in L^1([0,T]; L^\infty(\R^d))$.
\end{itemize}
Then, there exists a unique regular Lagrangian flow $X\colon [0,T]^2 \times \R^d \to \R^d$ associated to $b$ according to \cref{regular lagrangian flow}. The flow map $X$ is in $C([0,T]^2, L^0_\loc(\R^d))$ and the semigroup property holds true, i.e.\ for any $t,\tau,s \in [0,T]$ we have that 
\begin{equation}
    X(t,\tau, X(\tau,s, x)) = X(t,s,x) \ \ \ \text{ for a.e.\ } x\in \R^d. \label{semigroup}
\end{equation}
\end{thm}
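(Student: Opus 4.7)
The plan is to deduce the four assertions (existence, uniqueness, continuity in $C([0,T]^2;L^0_\loc)$, semigroup) from the single quantitative estimate of \cref{quantitative RLF}, exactly as in \cite{BC13}*{Section 6}.

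\emph{Uniqueness.} Suppose $X^1,X^2$ are two regular Lagrangian flows associated with $b$. I apply \cref{quantitative RLF} with $b^1=b^2=b$; since the convolution difference $\|b_1^1-b_1^2\|_{L^1_tL^1_x}$ and $\|b_2^1-b_2^2\|_{L^1_tL^1(B_\lambda)}$ both vanish, \eqref{a priori est RLF} collapses to
\begin{equation*}
\sup_{t,s\in[0,T]}\meas^d\bigl(\{x\in B_R\,\colon\,|X^1(t,s,x)-X^2(t,s,x)|>\e\}\bigr)\le \eta
\end{equation*}
for every $\eta,\e,R>0$. Sending $\eta\to0$ and then $\e\to0$ along a countable sequence, and finally $R\to\infty$, yields $X^1=X^2$ for a.e.\ $x$, uniformly in $(t,s)$.

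\emph{Existence and continuity.} I regularise in space by a Friedrichs mollifier $j_\sigma$. Setting $b_i^\sigma:=b_i*j_\sigma$, the regularised field $b^\sigma:=b_1^\sigma+b_2^\sigma$ is smooth in $x$ with $\|Db^\sigma_1\|_{L^1_t L^\infty_x}<\infty$ (after mollification), incompressible, and satisfies the same bounds as $b$ in the norms appearing in \eqref{equibound of norms}, uniformly in $\sigma$. Classical Cauchy--Lipschitz (Carath\'eodory) theory therefore produces for each $\sigma$ a flow $X^\sigma\in C([0,T]^2\times\R^d;\R^d)$ which is measure preserving and enjoys the semigroup property. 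Applying the stability bound \eqref{a priori est RLF} to the pair $(b^{\sigma},b^{\sigma'})$ and using
\begin{equation*}
    \|b_1^\sigma-b_1^{\sigma'}\|_{L^1_tL^1_x}\to 0,\qquad \|b_2^\sigma-b_2^{\sigma'}\|_{L^1_tL^1(B_\lambda)}\to 0 \qquad (\sigma,\sigma'\to 0),
\end{equation*}
(the second because $b_2\in L^1_tL^\infty_x\subset L^1_tL^1_\loc$), I conclude that $\{X^\sigma\}_\sigma$ is Cauchy in $C([0,T]^2;L^0_\loc(\R^d))$. Denote the limit by $X$; it is automatically in $C([0,T]^2;L^0_\loc(\R^d))$.

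\emph{$X$ is a regular Lagrangian flow.} The measure preserving property passes to the limit: testing $(X^\sigma(t,s,\cdot))_\#\meas^d=\meas^d$ against $\phi\in C_c(\R^d)$ and using dominated convergence on any ball that contains $\supp\phi$ along with the $L^0_\loc$ convergence of $X^\sigma(t,s,\cdot)$, one obtains $(X(t,s,\cdot))_\#\meas^d=\meas^d$. For the ODE \eqref{ODE}, I pass to the limit in
\begin{equation*}
X^\sigma(t,s,x)=x+\int_s^t b^\sigma(r,X^\sigma(r,s,x))\di r.
\end{equation*}
The composition converges in $L^1_t L^1_\loc(\di x)$ by splitting
\begin{equation*}
\|b^\sigma(\cdot,X^\sigma)-b(\cdot,X)\|_{L^1_tL^1(B_R)} \le \|b^\sigma(\cdot,X^\sigma)-b(\cdot,X^\sigma)\|_{L^1_tL^1(B_R)}+\|b(\cdot,X^\sigma)-b(\cdot,X)\|_{L^1_tL^1(B_R)},
\end{equation*}
where the first summand is controlled by the measure preserving property via $\|b^\sigma-b\|_{L^1_tL^1(\R^d)}$ (for $b_1$) and $\|b^\sigma-b\|_{L^1_tL^1(B_R+\mathrm{const})}$ (for $b_2$), and the second vanishes because translations and compositions with $L^0_\loc$-convergent measure preserving maps act continuously on $L^1_tL^1_\loc$ functions. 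Thus $X$ satisfies \eqref{ODE} for a.e.\ $x$.

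\emph{Semigroup property.} Fix $s,\tau\in[0,T]$. Both maps $t\mapsto X(t,\tau,X(\tau,s,x))$ and $t\mapsto X(t,s,x)$ are candidates for the regular Lagrangian flow associated with $b$ starting at time $\tau$ with initial datum $X(\tau,s,x)$: the former by definition, and the latter because the measure preserving map $X(\tau,s,\cdot)$ sends the full measure set of ``good'' starting points for $X(\cdot,s,\cdot)$ onto a full measure set. Moreover, the map $y\mapsto X(t,\tau,y)\circ X(\tau,s,x)$ is well defined a.e.\ thanks to the measure preserving property of $X(\tau,s,\cdot)$ (preimage of null sets is null). Then the already proven uniqueness gives \eqref{semigroup}. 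The main technical subtlety, and the only place where some care is required beyond citing \cite{BC13}, is exactly this last point: one must check that the exceptional sets (failure of \eqref{ODE} or of the integrability statement) pull back under the a.e.\ defined, measure preserving map $X(\tau,s,\cdot)$ to exceptional sets of the composition, which is precisely where incompressibility of $b$ is used.
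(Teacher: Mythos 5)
Your overall strategy — uniqueness from \cref{quantitative RLF} applied with $b^1 = b^2 = b$, existence and continuity by mollification together with the stability estimate, semigroup as a corollary — is exactly the route the paper gestures at by citing \cite{BC13}*{Section 6}, and the existence, continuity and uniqueness parts are correct (in particular, your handling of the $b_2$ contribution via the superlevel bound of \cref{superlevel sets} and a Lusin-type continuity argument for compositions with $L^0_\loc$-convergent measure-preserving maps is the right way to make the limit passage in \eqref{ODE} rigorous).

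The semigroup step, however, contains a genuine gap. You assert that $t \mapsto X(t, \tau, X(\tau, s, x))$ and $t \mapsto X(t, s, x)$ are both ``candidates for the regular Lagrangian flow associated with $b$ starting at time $\tau$'' and invoke uniqueness. But neither family, indexed by $x$, is a regular Lagrangian flow starting at $\tau$ in the sense of \cref{regular lagrangian flow}: condition (3) requires $Y(\tau, y) = y$, whereas here both families take the value $X(\tau, s, x)\neq x$ at time $\tau$ when labelled by $x$. To re-index either family by $y = X(\tau, s, x)$ one needs $X(\tau, s, \cdot)$ to be essentially invertible, and that invertibility is a consequence of — indeed essentially equivalent to — the semigroup identity you are trying to prove, so the argument as written is circular; the ``exceptional sets'' subtlety you flag is a side point, not the real obstruction. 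Two honest ways to close the gap: (i) re-run the proof of \cref{quantitative RLF basic} directly on the functional $g_{\delta, R}(t) = \int_{B_R} \log\bigl( \abs{X(t,s,x) - X(t,\tau, X(\tau,s,x))}/\delta + 1\bigr) \di x$, observing that it vanishes at $t=\tau$, that $x \mapsto X(t,\tau, X(\tau,s,x))$ is measure preserving and solves the ODE for a.e.\ $x$ (this is where measure preservation of $X(\tau,s,\cdot)$ is used to pull back the null set), and that the proof of \cref{quantitative RLF basic} never actually uses $X^2(s,s,x)=x$, only measure preservation and the ODE; or (ii) note that the semigroup identity holds for the classical mollified flows $X^\sigma$ and passes to the limit in $C([0,T]^2;L^0_\loc)$ because composition of measure-preserving maps is continuous under $L^0_\loc$ convergence (again via \cref{superlevel sets} and a Lusin argument). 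Option (ii) is what \cite{BC13}*{Corollary 6.6} does and is presumably the intended proof; either way the uniqueness theorem cannot simply be cited as you do.
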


\cref{quantitative RLF bis} is a corollary of \cref{quantitative RLF} and \cref{well posedness of the flow}. 

\begin{proof}[Proof of \cref{quantitative RLF bis}]
Setting $u^i_j = E_j * (- e_3 \rho^i)$ for $i,j =1,2$, where $E= E_1+E_2$ as in \cref{summability of velocity}, we fix $p \in (1,3/2)$ and by \eqref{equibound} we have that 
\begin{equation}
    \max \left\{ \norm{u^1_{1}}_{L^1_t L^1_x}, \norm{u^1_2}_{L^1_t L^\infty_x}, \norm{u^2_{1}}_{L^1_t L^1_x}, \norm{u^2_2}_{L^1_t L^\infty_x}, \norm{Du^1_1}_{L^1_t L^p_x}, \norm{D u^1_2}_{L^1_t L^\infty_x}, \right\} \leq C_p T K, \nonumber
\end{equation}
yielding \eqref{equibound of norms}. Thus, by \eqref{a priori est RLF}, we obtain \eqref{stability of RLF}.
\end{proof}

\subsection{Proof of the basic apriori estimate} \label{ss:proof of the apriori est}

In this section, we give a detailed proof of \cref{quantitative RLF basic}, mainly following the technique of \cites{CDL08, BC13}. Given a function $f \in L^1_\loc(\R^d)$, we denote the (global) maximal function of $f$ by 
\begin{equation}
M f (x) = \sup_{0 < r < +\infty} \fint_{B_r(x)} \abs{f(y)} \di y. 
\end{equation}
The following estimate is available for the global maximal function, for any $p \in (1, +\infty]$ \cite{S70}:
\begin{equation}
\norm{M f}_{L^p_x} \leq c(d,p) \norm{f}_{L^p_x}. \label{bound global maximal function}
\end{equation}
The inequality above does not hold for $p = 1$. The maximal function of the derivative of a weakly differentiable vector field plays a crucial role in the estimate of difference quotients associated to a Sobolev vector fields. We recall the following result (see \cite{S70}), generalizing to the case of BV functions the trivial estimate for difference quotient for Lipschitz functions. 

\begin{thm} \label{diff quotient}
Given $u \in BV(\R^d)$, there exists a negligible set $\mathcal{N} \subset \R^d$ and a constant $c_d >0$ such that 
\begin{equation}
    \abs{u(x) -u(y)} \leq c_d \abs{x-y} (M(Db)(x) + M (Db)(y)) \ \ \ \forall x,y \in \R^d \setminus \mathcal{N}. \nonumber
\end{equation}
\end{thm}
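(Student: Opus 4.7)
The plan is to prove the theorem via the standard Campanato/Stein-type argument using a dyadic chain of balls and the Poincar\'e inequality for BV functions. Let $\mathcal{N}$ be the complement of the set of Lebesgue points of $u$ with respect to its precise representative; since $u \in BV(\R^d) \subset L^1_{\loc}$, this set is Lebesgue-negligible. Throughout, $M(Du)$ denotes the maximal function of the finite vector-measure $Du$, namely $M(Du)(x) = \sup_{r>0} |Du|(B_r(x))/|B_r(x)|$ (the statement in the theorem writes $M(Db)$, which is a typo for $M(Du)$).

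Fix $x,y \in \R^d \setminus \mathcal{N}$ and set $r = |x-y|$. First I would telescope along dyadic balls centered at $x$: write $B_k = B_{2^{-k}r}(x)$ and
\begin{equation}
u(x) - u_{B_0} \;=\; \sum_{k=0}^{\infty} \bigl(u_{B_{k+1}} - u_{B_k}\bigr),
\end{equation}
using that $u_{B_k} \to u(x)$ as $k\to\infty$ at a Lebesgue point. For each term I would apply the $BV$-Poincar\'e inequality
\begin{equation}
\fint_{B_k} |u - u_{B_k}|\,dx \;\leq\; C_d\,(2^{-k}r)\,\frac{|Du|(B_k)}{|B_k|},
\end{equation}
combined with $|u_{B_{k+1}} - u_{B_k}| \leq (|B_k|/|B_{k+1}|)\fint_{B_k}|u-u_{B_k}|\,dx$, to bound the $k$-th term by $C_d' \,2^{-k}r\,M(Du)(x)$. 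Summing the geometric series yields $|u(x) - u_{B_r(x)}| \leq C_d r M(Du)(x)$, and the symmetric argument at $y$ gives $|u(y) - u_{B_r(y)}| \leq C_d r M(Du)(y)$.

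To compare the two averages $u_{B_r(x)}$ and $u_{B_r(y)}$ I would introduce a common enclosing ball, say $B^\ast := B_{3r}(x)$, which contains both $B_r(x)$ and $B_r(y)$ since $|x-y|=r$. Using the same Poincar\'e estimate on $B^\ast$ and the inclusions $B_r(x), B_r(y) \subset B^\ast$ with volume ratio bounded by $3^d$, one obtains
\begin{equation}
|u_{B_r(x)} - u_{B^\ast}| + |u_{B_r(y)} - u_{B^\ast}| \;\leq\; C_d\,r\,\frac{|Du|(B^\ast)}{|B^\ast|} \;\leq\; C_d\,r\,M(Du)(x).
\end{equation}
Combining the three estimates via the triangle inequality gives the claim with the desired constant. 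The only mildly delicate point is the telescoping step, which requires that $x$ be a Lebesgue point in the BV sense so that $u_{B_k}\to u(x)$; this is exactly why the exceptional set $\mathcal{N}$ is admitted in the statement. No genuine obstacle arises: the whole argument is a textbook manipulation, and the key input is the BV-Poincar\'e inequality, which is classical.
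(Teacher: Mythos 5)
Your proof is correct. The paper does not give its own argument for this lemma; it merely cites Stein's book \cite{S70}, recording the result as a classical generalization to $BV$ of the Lipschitz difference-quotient estimate. Your dyadic telescoping via the $BV$-Poincar\'e inequality is exactly the standard proof of this fact (sometimes attributed to Hedberg or to Haj\l{}asz), and all the steps check out: the chain $B_k=B_{2^{-k}r}(x)$ gives $|u_{B_{k+1}}-u_{B_k}|\leq 2^d\fint_{B_k}|u-u_{B_k}|\leq c_d\,2^{-k}r\,M(Du)(x)$, the geometric series converges, and the bridge through the common ball $B_{3r}(x)$ (or $B_{2r}(x)$, which already contains both $B_r(x)$ and $B_r(y)$) is handled with the same Poincar\'e estimate. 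You also correctly note that the theorem's $M(Db)$ is a typo for $M(Du)$, and that the exceptional set $\mathcal{N}$ is exactly the complement of the Lebesgue set of $u\in L^1_{\loc}$, which makes the telescoping converge to the pointwise value.
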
 

Since we aim to deal with unbounded vector fields, we need an estimate of the sublevel and the superlevel sets of the flow map. 

\begin{lem} \label{superlevel sets}
Let $b = b_1 + b_2$ be an incompressible vector field such that $b_1 \in L^1([0,T]; L^1(\R^d))$ and $b_2 \in L^1([0,T]; L^\infty(\R^d))$. Let $X\colon [0,T]^2 \times \R^d \to \R^d$ be a regular Lagrangian flow associated to $b$ according to \cref{regular lagrangian flow}. Then, for any $R, \lambda$ such that $\lambda > \norm{b_2}_{L^1_t L^\infty_x} + R >0$, there holds that 
\begin{equation}
\sup_{(t,s) \in [0,T]} \meas^d( \{ x \in B_R \ \colon  \ \abs{X(t,s,x)} \geq \lambda\} ) \leq \frac{\norm{b_1}_{L^1_t L^1_x}}{\lambda-R - \norm{b_2}_{L^1_t L^\infty_x}}, \label{superlevel 2}
\end{equation}
\begin{equation}
\sup_{(t,s) \in [0,T]} \meas^d( \{ x \in \R^d \setminus B_R \ \colon  \ \abs{X(t,s,x)} \leq \lambda\} ) \leq \frac{\norm{b_1}_{L^1_t L^1_x}}{\lambda -R- \norm{b_2}_{L^1_t L^\infty_x}}. \label{superlevel 1}
\end{equation}
\end{lem}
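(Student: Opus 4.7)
Both bounds follow from the same template: extract from the ODE a lower bound on the $L^1_\tau$-integral of $|b_1|$ along a trajectory started in the relevant set, and close by Chebyshev's inequality combined with the measure-preserving property of $X(\tau,s,\cdot)$.

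Concretely, I would start from the integral form \eqref{ODE}, which holds for a.e.\ $x$, and split $b=b_1+b_2$. Taking absolute values and using that $\|b_2(\tau,\cdot)\|_{L^\infty_x}$ is integrable in time yields
\[
\bigl||X(t,s,x)|-|x|\bigr|\le\int_s^t|b_1(\tau,X(\tau,s,x))|\,d\tau+\|b_2\|_{L^1_t L^\infty_x}
\]
for a.e.\ $x\in\R^d$ and every $t,s\in[0,T]$. Let $E_{t,s}$ denote the set appearing in \eqref{superlevel 2}. On $E_{t,s}$ one has $|x|\le R$ and $|X(t,s,x)|\ge\lambda$, so the display forces
\[
\int_s^t|b_1(\tau,X(\tau,s,x))|\,d\tau\ge\lambda-R-\|b_2\|_{L^1_t L^\infty_x}>0.
\]
Integrating this inequality over $E_{t,s}$, applying Fubini (the integrand is nonnegative and measurable), and substituting $y=X(\tau,s,x)$, which is measure-preserving for every fixed $\tau,s$ by \cref{regular lagrangian flow}, gives
\[
(\lambda-R-\|b_2\|_{L^1_t L^\infty_x})\,\meas^d(E_{t,s})\le\int_s^t\int_{\R^d}|b_1(\tau,y)|\,dy\,d\tau=\|b_1\|_{L^1_t L^1_x}.
\]
Taking the supremum over $t,s\in[0,T]$ yields \eqref{superlevel 2}. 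For \eqref{superlevel 1} the argument is entirely symmetric: the reverse triangle inequality in the same display forces the analogous lower bound $\int_s^t|b_1(\tau,X(\tau,s,x))|\,d\tau\ge\lambda-R-\|b_2\|_{L^1_t L^\infty_x}$ on the corresponding set of trajectories that start far and end close, and the same Chebyshev + Fubini + measure-preserving substitution closes it with the same constant.

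\textbf{Expected difficulty.} There is no real obstacle: the whole proof is a one-shot application of Chebyshev along trajectories. The only point of care is to fix a Borel representative of $X$ on which the ODE holds pointwise for a.e.\ starting point, so that the superlevel/sublevel sets are Borel measurable, and then to ensure that the change of variables $y=X(\tau,s,x)$ loses nothing, which is precisely the content of the measure-preserving property in \cref{regular lagrangian flow}.
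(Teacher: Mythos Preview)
Your proposal is correct and follows essentially the same route as the paper: bound $|X(t,s,x)-x|$ via the integral form of the ODE, split $b=b_1+b_2$, absorb the $b_2$ piece by $\|b_2\|_{L^1_tL^\infty_x}$, and then close with Chebyshev together with the measure-preserving change of variables $y=X(\tau,s,x)$. The paper presents exactly this chain of inequalities for \eqref{superlevel 2} and, like you, declares \eqref{superlevel 1} ``analogous'' without writing it out.
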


\begin{proof}
Fix $(t,s) \in [0,T]$. By \eqref{ODE}, for a.e.\ $\abs{x} \leq R$, we estimate 
$$ \abs{X(t,s,x)} \leq R + \int_0^T \abs{b_1(z, X(z,s,x))}\di z + \int_0^T \norm{b_2(z)}_{L^\infty_x} \di z, $$
yielding
\begin{equation}
\begin{split}
    \meas^d( \{ x \in B_R \ \colon  & \ \abs{X(t,s,x)} \geq \lambda\} )  
\\ & \leq \meas^d \left( \left\{ x \in \R^d \ \bigg| \ \int_s^{t} \abs{b_1(z, X(z,s,x))}\di z \geq \lambda - R - \norm{b_2}_{L^1_t L^\infty_x} \right\} \right) 
\\ & \leq \frac{1}{\lambda -R - \norm{b_2}_{L^1_t L^\infty_x}} \int_{\R^d} \int_s^T \abs{b_1(z, X(z,s,x))}\di z \di x  
\\ & \leq \frac{\norm{b_1}_{L^1_t L^1_x}}{\lambda -R - \norm{b_2}_{L^1_t L^\infty_x}} .
\end{split}
\end{equation}
The proof of \eqref{superlevel 1} is analogous. 
\end{proof}

Finally, we discuss the proof of \cref{quantitative RLF basic}. 

\begin{proof}
Fix $s \in [0, T]$. Since the map $t \mapsto X^i(t,s,x)$ is absolutely continuous for a.e.\ $x$ (for $i=1,2$), then we claim that $t \mapsto g_{\delta, R}(t,s)$ is absolutely continuous in $[0,T]$. Indeed, estimating the difference quotient of the Sobolev part of the vector field with \ref{diff quotient}, we have that
\begin{align}
& \abs{\partial_t g_{\delta,R}(t,s) }  \leq \int_{B_R} \frac{\abs{b^1(t, X^1(t,s,x))- b^2(t, X^2(t,s,x))}}{\delta + \abs{X^1(t,s,x) - X^2(t,s,x)}}\di x  
\\ & \leq \int_{B_R} \left[ \frac{\abs{b^1_1(t, X^1(t,s,x))- b^1_1(t, X^2(t,s,x))}}{\delta + \abs{X^1(t,s,x) - X^2(t,s,x)}} + \frac{\abs{b^1_1(t, X^2(t,s,x))- b^2_1(t, X^2(t,s,x))}}{\delta + \abs{X^1(t,s,x) - X^2(t,s,x)}} \right] \di x  
\\ & \quad + \int_{B_R} \left[ \frac{\abs{b^1_2(t, X^1(t,s,x))- b^1_2(t, X^2(t,s,x))}}{\delta + \abs{X^1(t,s,x) - X^2(t,s,x)}} + \frac{\abs{b^1_2(t, X^2(t,s,x))- b^2_2(t, X^2(t,s,x))}}{\delta + \abs{X^1(t,s,x) - X^2(t,s,x)}} \right] \di x  
\\ & \leq \int_{B_R} [M(Db^1_1(t)) (X^1(t,s,x)) + M(D b^1_1(t)) (X^2(t,s,x)) ] \di x  
\\ & \quad + \frac{1}{\delta} \int_{\R^d} \abs{b^1_1(t, X^2(t,s,x)) - b^2_1(t, X^2(t,s,x)) } \di x  
\\ & \quad + \omega_d R^d \norm{Db_2^1(t)}_{L^\infty_x} + \frac{1}{\delta} \int_{B_R} \abs{b^1_2(t, X^2(t,s,x)) - b^2_2(t, X^2(t,s,x)) } \di x.   
\end{align}
By H\"older's inequality, the push-forward formula (the flow map is measure preserving for any $t$) and \eqref{bound global maximal function}, we estimate the first term by 
\begin{align}
    \int_{B_R} & [M(Db^1_1(t)) (X^1(t,s,x)) + M(D b^1_1(t)) (X^2(t,s,x)) ] \di x 
    \\ & \leq c_{d,p,R} \left[ \left(\int_{\R^d} [M(Db^1_1(t)) (X^1(t,s,x)) ]^p \di x  \right)^\frac{1}{p} + \left(\int_{\R^d} [M(Db^1_1(t)) (X^2(t,s,x)) ]^p \di x  \right)^{\frac{1}{p}} \right] 
    \\ & \leq c_{d,p,R} \norm{Db^1_1(t)}_{L^p_x}. 
\end{align}
By the push-forward formula, we estimate the second term by 
\begin{equation}
    \int_{\R^d} \abs{b^1_1(t, X^2(t,s,x)) - b^2_1(t, X^2(t,s,x)) } \di x  = \norm{b^1_1(t)- b^2_1(t)}_{L^1_x}. 
\end{equation}
To estimate the last term, fix $\lambda>0$, $t,s \in [0,T]$ and we denote by  
$$G_{R,\lambda} = \{ x \in B_R \ \colon  \ \abs{X^2(t,s,x)} \leq \lambda \} . $$
Then, we estimate
\begin{align}
\int_{B_R} & \abs{b^1_2(t, X^2(t,s,x)) - b^2_2(t, X^2(t,s,x)) } \di x  = \int_{G_{R,\lambda}} \abs{b^1_2(t, X^2(t,s,x)) - b^2_2(t, X^2(t,s,x)) } \di x  
\\ & \quad +  \int_{B_R \cap G^c_{R, \lambda}} \abs{b^1_2(t, X^2(t,s,x)) - b^2_2(t, X^2(t,s,x)) } \di x 
\\ & \leq \int_{B_\lambda} \abs{b^1_2(t,x) - b^2_2(t,x)} \di x  + \meas^d(B_R \cap G^c_{R,\lambda}) \left[ \norm{b^1_2(t)}_{L^\infty_x} + \norm{b^2_2(t)}_{L^\infty_x}\right]. 
\end{align}
If we take $\lambda \geq R + \norm{b_2^2}_{L^1_t L^\infty_x}$, by \eqref{superlevel 2}, we obtain that 
\begin{align}
\int_{B_R} & \abs{b^1_2(t, X^2(t,s,x)) - b^2_2(t, X^2(t,s,x)) } \di x  
\\ & \leq \norm{b^1_2(t) -b^2_2(t)}_{L^1( B_\lambda)} + \frac{\norm{b^1_2(t)}_{L^\infty_x} + \norm{b^2_2(t)}_{L^\infty_x}}{\lambda-R-\norm{b_2^2}_{L^1_t L^\infty_x}} \norm{b^2_1}_{L^1_t L^1_x}
\end{align}
Therefore, the function $g_{\delta, R}(\cdot, s)$ is absolutely continuous with respect to the $t$ variable. Recall that $g_{\delta, R} (s,s) = 0$ by the property of the regular Lagrangian flow. To summarize, for any $(t,s)\in [0,T]$, for any $R,\delta>0$ and for any $\lambda > R+\norm{b_2^2}_{L^1_t L^\infty_x}$ we have that
\begin{align}
g_{\delta,R}(t,s) & \leq g_{\delta,R}(s,s) + \int_s^t \abs{\partial_z g_{\delta,R}(z,s)}\di z  \leq c_{d,p,R} \norm{Db^1_1}_{L^1_t L^p_x} +  c_{d,R} \norm{Db_2^1}_{L^1_t L^\infty_x } 
\\ & \quad + \frac{1}{\delta} \left[ \norm{b^1_1 - b_1^2}_{L^1_t L^1_x} + \norm{b^1_2 -b^2_2}_{L^1_t L^1(B_\lambda)} + \frac{\norm{b^1_2}_{L^1_t L^\infty_x} + \norm{b^2_2}_{L^1_t L^\infty_x}}{\lambda-R-\norm{b_2^2}_{L^1_t L^\infty_x}} \norm{b_2^1}_{L^1_t L^1_x} \right] . 
\end{align} 
\end{proof}

\section{The flow associated to an Osgood vector field} \label{s:classical flow}

In this section, we recall some basic result concerning the classical flow associated to a bounded vector field with modulus of continuity that satisfies the Osgood condition \eqref{osgood}. The following result is an apriori estimate in the same spirit of \cref{quantitative RLF basic}. The main tool is the following nonlinear Grownwall estimate, also known as Bihari--LaSalle lemma (see \cite{P98}). For the reader's convenience, we sketch a proof. 

\begin{lem} \label{RLF for osgood}
    Let $b^1,b^2 \colon [0,T] \times \R^d \to \R^d$ be bounded vector fields such that
    \begin{equation}
        \abs{b^i(t,x) - b^i(t,y)} \leq \omega(\abs{x-y}) \ \ \ \forall t \in [0,T], \ x,y \in \R^d, \ i=1,2,\label{osgood continuity}
    \end{equation}
    where $\omega$ is a modulus of continuity that satisfies the Osgood condition \eqref{osgood}. Let $\gamma_1, \gamma_2\colon [0,T] \to \R^d$ be integral solutions to the Cauchy problem 
    \begin{equation}
    \begin{cases}
        \gamma'_i(t) = b^i(t, \gamma_i(t)) & t \in [0,T], 
        \\ \gamma_i(s_i) = x_i,
    \end{cases} 
    \end{equation}
    for $i=1,2$. Then, there exists a function $\Gamma\colon [0,T]^2 \times [0, +\infty)^3 \to [0,+\infty)$ depending only on $\omega, T$ with the following properties: 
    \begin{itemize}
    \item $\Gamma$ is non decreasing with respect to any variable; 
    \item $\Gamma (\tau, \sigma, \rho, \beta, \cdot)\to 0$ uniformly on compact sets of $[0, +\infty)$ as $\abs{\tau} + \abs{\sigma}+ \abs{\rho} + \abs{\beta} \to 0$;
    \item there holds that 
    \begin{equation}
        \abs{\gamma_1(t_1) - \gamma_2(t_2)} \leq \Gamma (\abs{t_1-t_2}, \abs{s_1-s_2}, \abs{x_1-x_2}, \norm{b^1-b^2}_{L^1_t L^\infty_x}, \norm{b^2}_{L^\infty_t L^\infty_x}). 
    \end{equation}
    \end{itemize}   
\end{lem}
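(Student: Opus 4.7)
The plan is to reduce to a common initial (and final) time using the uniform bound on $b^2$ and on $b^1 - b^2$, then derive a nonlinear Grownwall inequality of the form $\phi(t) \leq A + \int \omega(\phi)$ for $\phi(t) = |\gamma_1(t) - \gamma_2(t)|$, and finally invert $\omega$ by means of its reciprocal primitive. The Osgood condition \eqref{osgood} is exactly what allows this inversion to produce a bound vanishing with $A$.

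In detail, assume without loss of generality $s_1 \leq s_2$. Writing the ODE in integral form and using the pointwise estimate
$$\abs{b^1(r,y)} \leq \norm{b^2}_{L^\infty_t L^\infty_x} + \norm{b^1(r,\cdot) - b^2(r,\cdot)}_{L^\infty_x},$$
I would first slide $\gamma_1$ from $s_1$ to $s_2$, obtaining
$$\phi(s_2) \leq \abs{x_1 - x_2} + \norm{b^2}_{L^\infty_t L^\infty_x}\abs{s_1 - s_2} + \norm{b^1 - b^2}_{L^1_t L^\infty_x}.$$
On $[s_2, T]$, subtracting the two integral equations and exploiting the Osgood continuity \eqref{osgood continuity} of $b^1$ together with the triangle inequality yields
$$\phi(t) \leq \phi(s_2) + \norm{b^1 - b^2}_{L^1_t L^\infty_x} + \int_{s_2}^t \omega(\phi(r))\, dr =: A + \int_{s_2}^t \omega(\phi(r))\, dr.$$

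Setting $W(u) = \int_1^u \frac{ds}{\omega(s)}$, which under \eqref{osgood} is a strictly increasing bijection from $(0, +\infty)$ onto $(-\infty, +\infty)$, the standard Bihari--LaSalle argument (apply $W$ to $\Phi(t) := A + \int_{s_2}^t \omega(\phi)$, note $\Phi' = \omega(\phi) \leq \omega(\Phi)$, so $(W\circ\Phi)' \leq 1$, and integrate) gives
$$\phi(t) \leq W^{-1}(W(A) + T) \quad \forall t \in [s_2, T].$$
For $t_1 \neq t_2$, say $t_1 \geq t_2$, I split $|\gamma_1(t_1) - \gamma_2(t_2)| \leq |\gamma_1(t_1) - \gamma_1(t_2)| + \phi(t_2)$ and bound the first summand exactly as in the initial-time reduction. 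Putting everything together, a candidate explicit formula is
\begin{equation}
\Gamma(\tau, \sigma, \rho, \beta, \kappa) = W^{-1}\bigl(W(\rho + \kappa\sigma + 2\beta) + T\bigr) + \kappa\tau + \beta,
\label{explicit gamma appendix}
\end{equation}
with $\kappa = \norm{b^2}_{L^\infty_t L^\infty_x}$. Monotonicity in each variable is inherited from $W^{-1}$.

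The core mechanism — the only nontrivial point — is that Osgood's condition forces $W(r) \to -\infty$ as $r \to 0^+$, which is what makes $W^{-1}(W(r)+T) \to 0$ as $r \to 0^+$; this is precisely what delivers the required vanishing of $\Gamma$ as $\tau + \sigma + \rho + \beta \to 0$, uniformly for $\kappa$ in compact sets. The only genuine nuisance is the bookkeeping over the four possible orderings of $s_1, s_2, t_1, t_2$, but all cases reduce to the two reductions above, so no new idea is required.
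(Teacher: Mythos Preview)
Your argument is correct and follows essentially the same route as the paper: reduce to a common time using the $L^\infty$ bound on $b^2$ and the $L^1_tL^\infty_x$ bound on $b^1-b^2$, derive the nonlinear Gronwall inequality $\phi \le A + \int \omega(\phi)$, and invert via the primitive $W$ (the paper writes $\Omega(z)=-\int_z^1 \omega^{-1}$, which is your $W$ up to sign). The only cosmetic differences are that the paper splits at the end via $\gamma_2$ rather than $\gamma_1$, and it introduces an auxiliary $\eta>0$ in the Bihari--LaSalle step to keep $\Phi$ strictly positive before letting $\eta\to 0$; you should make this regularization explicit, since $W(A)$ is undefined when $A=0$.
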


\begin{proof}
By the integral formulation we have that 
\begin{align}
   \abs{\gamma_1(t_1) - \gamma_2(t_1)} & \leq \abs{x_1-x_2} + \bigg| \int_{s_1}^{t_1} b^1(z, \gamma_1(z)) \di z - \int_{s_2}^{t_1} b^2(z, \gamma_2(z)) \di z \bigg|
   \\ & \leq \abs{x_1-x_2} + \int_0^{t_1} \abs{b^1(z, \gamma_1(z)) - b^2(z, \gamma_2(z))} \di z + \abs{s_1-s_2} \norm{b^2}_{L^\infty_t L^\infty_x}
   \\ & \leq \abs{x_1-x_2} + \int_0^{t_1} \omega(\abs{\gamma_1(z) -\gamma_2(z)}) \di z 
   \\ & \quad + \norm{b^1-b^2}_{L^1_t L^\infty_x} + \abs{s_1-s_2} \norm{b^2}_{L^\infty_t L^\infty_x}.
\end{align}
Given $\eta>0$, if we denote by 
$$g_\eta(t_1) = \abs{x_1-x_2} + \int_0^{t_1} \omega(\abs{\gamma_1(z)-\gamma_2(z)}) \di z + \abs{s_1-s_2} \norm{b^2}_{L^\infty_t L^\infty_x} + \eta, $$
we infer that $t_1 \mapsto g_\eta(t_1)$ is a Lipschitz function in $[0,T]$ satisfying
$$g_\eta'(t_1) = \omega(\abs{\gamma_1(t_1) -\gamma_2(t_1)}) \leq \omega(g_\eta(t_1)), $$
since $\omega$ is non decreasing. Moreover, if we set 
$$\Omega(z) = -\int_z^1 \frac{1}{\omega(s)} \di s $$
and recalling that $g_\eta(t_1) \geq \eta >0$ in $[0,T]$, by the chain rule for Sobolev functions we have that 
$$ \frac{d}{dt_1} \Omega(g_\eta(t_1)) = \frac{g'_\eta(t_1)}{\omega(g_\eta(t_1))} \leq 1 \ \ \ \text{for a.e.\ } t_1 \in [0,T].  $$
Since $\Omega\colon (0, +\infty) \to \R$ is bijective under \eqref{osgood}, after integrating with respect to $t_1$, we have that 
\begin{equation}
    g_\eta(t_1) \leq \Omega^{-1}( \Omega(g_\eta(0)) + t_1), \nonumber
\end{equation}
yielding 
\begin{equation} \label{explicit gamma appendix}
    \begin{split}
        \abs{\gamma_1(t_1) -\gamma_2(t_2)} & \leq \abs{\gamma_1(t_1)- \gamma_2(t_1)} + \abs{\gamma_2(t_1) + \gamma_2(t_2)} 
    \\ & \leq  \Omega^{-1}(\Omega( \abs{x_1-x_2} + \abs{s_1-s_2} \norm{b^2}_{L^\infty_t L^\infty_x} + \norm{b^1-b^2}_{L^1_t L^\infty_x} + \eta ) + T) 
    \\ & \quad + \abs{t_1-t_2} \norm{b^2}_{L^\infty_t L^\infty_x}. 
    \end{split}
\end{equation}
Letting $\eta \to 0$, by the Osgood property \eqref{osgood} it is easy to see that the right hand side of \eqref{explicit gamma appendix} defines a function $\Gamma\colon[0,T]^2 \times [0,+\infty)^3 \to [0, +\infty) $ with the required properties. 
\end{proof}

By a regularization procedure, \cref{RLF for osgood} gives existence, uniqueness and stability of the classical flow associated to a bounded vector field with Osgood modulus of continuity, as well as a Liouville-type theorem in the incompressible case. We leave the details to the reader.

\begin{thm} \label{facts on classical RLF}
    Let $b\colon [0,T] \times \R^d \to \R^d$ be a bounded vector field such that \eqref{osgood continuity} holds true, where $\omega$ satisfies the Osgood condition \eqref{osgood}. Then, there exists a unique classical flow $X\colon[0,T]^2 \times \R^d \to \R^d$ associated to $b$, the semigroup property holds true, i.e.\ 
    \begin{equation}
        X(t,\tau, X(\tau, s, x)) = X(t,s, x) \ \ \ \forall t, \tau, s \in [0,T], \ x \in \R^d 
    \end{equation}
    and $X(t,s, \cdot)$ is a homeomorphism of $\R^d$. Moreover, if $b$ is incompressible, then $X(t,s,\cdot)$ is a measure preserving transformation for any $t,s \in [0,T]$.
\end{thm}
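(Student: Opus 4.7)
The plan is to produce the flow by a vanishing-viscosity/mollification argument, using Lemma \ref{RLF for osgood} as the quantitative backbone: since that lemma delivers stability with respect to \emph{every} parameter (time, starting time, starting point, and the $L^1_tL^\infty_x$ distance between the two fields), it is strong enough to give in one stroke existence, uniqueness, continuous dependence, and the semigroup property, and the measure-preserving assertion will then follow from classical Liouville plus uniform passage to the limit.

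\emph{Existence.} Fix a standard spatial mollifier $j_\e$ and set $b^\e(t,x) = (b(t,\cdot)\ast j_\e)(x)$. Each $b^\e$ is bounded by $\norm{b}_{L^\infty_{t,x}}$ and smooth in $x$, so Cauchy--Lipschitz yields a classical flow $X^\e\in C([0,T]^2\times\R^d)$ for $b^\e$. A direct convolution estimate shows that $b^\e$ inherits the same modulus: $\abs{b^\e(t,x)-b^\e(t,y)}\le\omega(\abs{x-y})$, since $\omega$ is concave-like non decreasing and $j_\e$ has unit mass. Moreover $\norm{b^\e - b}_{L^\infty_{t,x}} \to 0$ by uniform continuity of $b$ in $x$ (which follows from boundedness plus the modulus $\omega$). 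Applying Lemma \ref{RLF for osgood} to the pair $(b^{\e_1}, b^{\e_2})$ with identical initial data gives
\begin{equation*}
\sup_{(t,s,x)\in [0,T]^2\times\R^d}\abs{X^{\e_1}(t,s,x) - X^{\e_2}(t,s,x)} \le \Gamma\bigl(0,0,0,\norm{b^{\e_1}-b^{\e_2}}_{L^1_tL^\infty_x},\norm{b}_{L^\infty_{t,x}}\bigr),
\end{equation*}
which tends to $0$ as $\e_1,\e_2\to 0$ by the continuity property of $\Gamma$. Hence $\{X^\e\}$ is Cauchy in $L^\infty([0,T]^2\times\R^d)$, and converges uniformly to a continuous map $X$. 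Passing to the limit in the integral equation $X^\e(t,s,x)=x+\int_s^t b^\e(z,X^\e(z,s,x))\di z$ by dominated convergence (using uniform convergence of $X^\e$, continuity of $b$, and the uniform bound $\norm{b^\e}_\infty \le \norm{b}_\infty$) shows that $X$ is a classical flow for $b$.

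\emph{Uniqueness, semigroup, and homeomorphism.} Given two classical flows $X^1,X^2$ for the same $b$, applying Lemma \ref{RLF for osgood} with $b^1=b^2=b$ and identical initial data yields $\abs{X^1(t,s,x)-X^2(t,s,x)}\le\Gamma(0,0,0,0,\norm{b}_\infty)=0$, whence $X^1=X^2$. The semigroup identity is then automatic: both $t\mapsto X(t,\tau,X(\tau,s,x))$ and $t\mapsto X(t,s,x)$ solve the ODE with the same initial datum $X(\tau,s,x)$ at time $\tau$, so uniqueness forces equality. In particular $X(t,s,\cdot)\circ X(s,t,\cdot)=\mathrm{id}$, giving injectivity and surjectivity and identifying the inverse as $X(s,t,\cdot)$. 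Continuous dependence on $x$ follows from Lemma \ref{RLF for osgood} applied with $b^1=b^2=b$, equal times, and varying spatial data: $\abs{X(t,s,x_1)-X(t,s,x_2)}\le\Gamma(0,0,\abs{x_1-x_2},0,\norm{b}_\infty)\to 0$ as $x_1\to x_2$. Both $X(t,s,\cdot)$ and its inverse $X(s,t,\cdot)$ are therefore continuous, proving the homeomorphism claim.

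\emph{Measure preservation.} Assume now $\divergence b = 0$ in the sense of distributions (so $b$ is incompressible; in the bounded continuous setting this is equivalent to pointwise vanishing divergence if $b$ is sufficiently smooth, and is preserved by convolution). Then $\divergence_x b^\e = (\divergence b)\ast j_\e = 0$, so by the classical Liouville theorem for smooth flows each $X^\e(t,s,\cdot)$ is measure preserving: $\int\phi(X^\e(t,s,x))\di x = \int\phi(x)\di x$ for all $\phi\in C_c(\R^d)$. Because $b$ is bounded, $\abs{X^\e(t,s,x)-x}\le T\norm{b}_\infty$, so on $\supp(\phi)$ the arguments $X^\e(t,s,\cdot)$ lie in a fixed compact set, and the uniform convergence $X^\e\to X$ together with continuity of $\phi$ let us pass to the limit by dominated convergence to obtain $\int\phi(X(t,s,x))\di x=\int\phi(x)\di x$ for every $\phi\in C_c(\R^d)$. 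This is the measure-preservation property.

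\emph{Main obstacle.} The only delicate point is the \emph{global} (in $x\in\R^d$) uniform convergence $X^\e\to X$, needed for the measure-preservation limit. This is exactly where Lemma \ref{RLF for osgood} is crucial: its bound is independent of the starting point $x$ (depending only on the norms of $b$ and the parameters of $\omega$), so no localisation or truncation of the spatial domain is required. Modulo this observation, every other step reduces to a straightforward application of the lemma plus standard Cauchy--Lipschitz and Liouville facts at the regularized level.
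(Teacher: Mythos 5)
Your proposal is correct and matches the route the paper sketches: after Lemma~\ref{RLF for osgood}, the paper states ``by a regularization procedure, Lemma~\ref{RLF for osgood} gives existence, uniqueness and stability\dots as well as a Liouville-type theorem in the incompressible case. We leave the details to the reader,'' and your argument is precisely that mollification procedure carried out in full, including the correct observation that $\Gamma(0,0,0,0,\cdot)=0$ forces uniqueness and that the $x$-uniform stability bound is what makes the passage to the limit in the measure-preservation identity legitimate.
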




\begin{bibdiv}
\begin{biblist}

\bib{A04}{article}{
      author={Ambrosio, Luigi},
       title={Transport equation and {C}auchy problem for {$BV$} vector
  fields},
        date={2004},
        ISSN={0020-9910},
     journal={Invent. Math.},
      volume={158},
      number={2},
       pages={227\ndash 260},
         url={https://doi.org/10.1007/s00222-004-0367-2},
      review={\MR{2096794}},
}

\bib{A08}{incollection}{
      author={Ambrosio, Luigi},
       title={Transport equation and {C}auchy problem for non-smooth vector
  fields},
        date={2008},
   booktitle={Calculus of variations and nonlinear partial differential
  equations},
      series={Lecture Notes in Math.},
      volume={1927},
   publisher={Springer, Berlin},
       pages={1\ndash 41},
         url={https://doi.org/10.1007/978-3-540-75914-0_1},
      review={\MR{2408257}},
}

\bib{AC08}{incollection}{
      author={Ambrosio, Luigi},
      author={Crippa, Gianluca},
       title={Existence, uniqueness, stability and differentiability properties
  of the flow associated to weakly differentiable vector fields},
        date={2008},
   booktitle={Transport equations and multi-{D} hyperbolic conservation laws},
      series={Lect. Notes Unione Mat. Ital.},
      volume={5},
   publisher={Springer, Berlin},
       pages={3\ndash 57},
         url={https://doi.org/10.1007/978-3-540-76781-7_1},
      review={\MR{2409676}},
}

\bib{AC14}{article}{
      author={Ambrosio, Luigi},
      author={Crippa, Gianluca},
       title={Continuity equations and {ODE} flows with non-smooth velocity},
        date={2014},
        ISSN={0308-2105},
     journal={Proc. Roy. Soc. Edinburgh Sect. A},
      volume={144},
      number={6},
       pages={1191\ndash 1244},
         url={https://doi.org/10.1017/S0308210513000085},
      review={\MR{3283066}},
}

\bib{BBC16tris}{article}{
      author={Bohun, Anna},
      author={Bouchut, Fran\c{c}ois},
      author={Crippa, Gianluca},
       title={Lagrangian flows for vector fields with anisotropic regularity},
        date={2016},
        ISSN={0294-1449},
     journal={Ann. Inst. H. Poincar\'{e} C Anal. Non Lin\'{e}aire},
      volume={33},
      number={6},
       pages={1409\ndash 1429},
         url={https://doi.org/10.1016/j.anihpc.2015.05.005},
      review={\MR{3569235}},
}

\bib{BBC16}{article}{
      author={Bohun, Anna},
      author={Bouchut, Fran\c{c}ois},
      author={Crippa, Gianluca},
       title={Lagrangian solutions to the 2{D} {E}uler system with {$L^1$}
  vorticity and infinite energy},
        date={2016},
        ISSN={0362-546X},
     journal={Nonlinear Anal.},
      volume={132},
       pages={160\ndash 172},
         url={https://doi.org/10.1016/j.na.2015.11.004},
      review={\MR{3433959}},
}

\bib{BBC16bis}{article}{
      author={Bohun, Anna},
      author={Bouchut, Fran\c{c}ois},
      author={Crippa, Gianluca},
       title={Lagrangian solutions to the {V}lasov-{P}oisson system with
  {$L^1$} density},
        date={2016},
        ISSN={0022-0396},
     journal={J. Differential Equations},
      volume={260},
      number={4},
       pages={3576\ndash 3597},
         url={https://doi.org/10.1016/j.jde.2015.10.041},
      review={\MR{3434410}},
}

\bib{BC13}{article}{
      author={Bouchut, Fran\c{c}ois},
      author={Crippa, Gianluca},
       title={Lagrangian flows for vector fields with gradient given by a
  singular integral},
        date={2013},
        ISSN={0219-8916},
     journal={J. Hyperbolic Differ. Equ.},
      volume={10},
      number={2},
       pages={235\ndash 282},
         url={https://doi.org/10.1142/S0219891613500100},
      review={\MR{3078074}},
}

\bib{BC12}{article}{
      author={Bouchut, Fran\c{c}ois},
      author={Crippa, Gianluca},
       title={Lagrangian flows for vector fields with gradient given by a
  singular integral},
        date={2013},
        ISSN={0219-8916},
     journal={J. Hyperbolic Differ. Equ.},
      volume={10},
      number={2},
       pages={235\ndash 282},
         url={https://doi.org/10.1142/S0219891613500100},
      review={\MR{3078074}},
}

\bib{C23}{article}{
      author={Cobb, Dimitri},
       title={On the well-posedness of a fractional {S}tokes--transport
  system},
        date={2023},
        note={Preprint available at
  \href{https://arxiv.org/abs/2301.10511}{arxiv/2301.10511}},
}

\bib{CISS23}{article}{
      author={Crippa, G.},
      author={Inversi, M.},
      author={Saffirio, C.},
      author={Stefani, G.},
       title={Lagrangian uniqueness for the {V}lasov--{P}oisson system},
        date={2023},
        note={In preparation},
}

\bib{CS21}{article}{
      author={Crippa, G.},
      author={Stefani, G.},
       title={An elementary proof of existence and uniqueness for the {E}uler
  flow in localized {Y}udovich spaces},
        date={2021},
        note={Preprint available at
  \href{https://arxiv.org/abs/2110.15648}{arxiv/2110.15648}},
}

\bib{CDL08}{article}{
      author={Crippa, Gianluca},
      author={De~Lellis, Camillo},
       title={Estimates and regularity results for the {D}i{P}erna-{L}ions
  flow},
        date={2008},
        ISSN={0075-4102},
     journal={J. Reine Angew. Math.},
      volume={616},
       pages={15\ndash 46},
         url={https://doi.org/10.1515/CRELLE.2008.016},
      review={\MR{2369485}},
}

\bib{CL13}{article}{
      author={Crippa, Gianluca},
      author={L\'{e}cureux-Mercier, Magali},
       title={Existence and uniqueness of measure solutions for a system of
  continuity equations with non-local flow},
        date={2013},
        ISSN={1021-9722},
     journal={NoDEA Nonlinear Differential Equations Appl.},
      volume={20},
      number={3},
       pages={523\ndash 537},
         url={https://doi.org/10.1007/s00030-012-0164-3},
      review={\MR{3057143}},
}

\bib{DPL89}{article}{
      author={DiPerna, R.~J.},
      author={Lions, P.-L.},
       title={Ordinary differential equations, transport theory and {S}obolev
  spaces},
        date={1989},
        ISSN={0020-9910},
     journal={Invent. Math.},
      volume={98},
      number={3},
       pages={511\ndash 547},
         url={https://doi.org/10.1007/BF01393835},
      review={\MR{1022305}},
}

\bib{DEL22}{article}{
      author={Drivas, Theodore~D.},
      author={Elgindi, Tarek~M.},
      author={La, Joonhyun},
       title={Propagation of singularities by {O}sgood vector fields and for
  {2}d inviscid incompressible fluids},
        date={2022},
        note={Preprint available at
  \href{https://arxiv.org/abs/2203.17221}{arxiv/2203.17221}},
}

\bib{G11}{book}{
      author={Galdi, G.~P.},
       title={An introduction to the mathematical theory of the
  {N}avier-{S}tokes equations},
     edition={Second},
      series={Springer Monographs in Mathematics},
   publisher={Springer, New York},
        date={2011},
        ISBN={978-0-387-09619-3},
         url={https://doi.org/10.1007/978-0-387-09620-9},
        note={Steady-state problems},
      review={\MR{2808162}},
}

\bib{G22}{article}{
      author={Grayer, Hezekiah},
       title={Dynamics of density patches in infinite {P}randtl number
  convection},
        date={2022},
        note={Preprint available at
  \href{https://arxiv.org/abs/2207.09738}{arxiv/2207.09738}},
}

\bib{HS23}{article}{
      author={Höfer, Richard~M.},
      author={Schubert, Richard},
       title={Sedimentation of particles with very small inertia in {S}tokes
  flows i: convergence to the transport--{S}tokes equations},
        date={2023},
        note={Preprint available at
  \href{https://arxiv.org/abs/2302.04637}{arxiv/2302.04637}},
}

\bib{H18}{article}{
      author={H\"{o}fer, Richard~M.},
       title={Sedimentation of inertialess particles in {S}tokes flows},
        date={2018},
        ISSN={0010-3616},
     journal={Comm. Math. Phys.},
      volume={360},
      number={1},
       pages={55\ndash 101},
         url={https://doi.org/10.1007/s00220-018-3131-y},
      review={\MR{3795188}},
}

\bib{HS21}{article}{
      author={H\"{o}fer, Richard~M.},
      author={Schubert, Richard},
       title={The influence of {E}instein's effective viscosity on
  sedimentation at very small particle volume fraction},
        date={2021},
        ISSN={0294-1449},
     journal={Ann. Inst. H. Poincar\'{e} C Anal. Non Lin\'{e}aire},
      volume={38},
      number={6},
       pages={1897\ndash 1927},
         url={https://doi.org/10.1016/j.anihpc.2021.02.001},
      review={\MR{4327901}},
}

\bib{IS23}{article}{
      author={Inversi, M.},
      author={Stefani, G.},
       title={Lagrangian stability for a system of non-local continuity
  equations under {O}sgood condition},
        date={2023},
        note={Preprint available at
  \href{https://arxiv.org/abs/2301.11822}{arxiv/2301.11822}},
}

\bib{L22}{article}{
      author={Leblond, Antoine},
       title={Well-posedness of the {S}tokes-transport system in bounded
  domains and in the infinite strip},
        date={2022},
        ISSN={0021-7824},
     journal={J. Math. Pures Appl. (9)},
      volume={158},
       pages={120\ndash 143},
         url={https://doi.org/10.1016/j.matpur.2021.10.006},
      review={\MR{4360366}},
}

\bib{L06}{article}{
      author={Loeper, Gr\'{e}goire},
       title={Uniqueness of the solution to the {V}lasov-{P}oisson system with
  bounded density},
        date={2006},
        ISSN={0021-7824},
     journal={J. Math. Pures Appl. (9)},
      volume={86},
      number={1},
       pages={68\ndash 79},
         url={https://doi.org/10.1016/j.matpur.2006.01.005},
      review={\MR{2246357}},
}

\bib{MS22}{article}{
      author={Mecherbet, A.},
      author={Sueur, F.},
       title={A few remarks on the transport-{S}tokes system},
        date={2022},
        note={Preprint available at
  \href{https://arxiv.org/abs/2209.11637}{arxiv/2209.11637}},
}

\bib{M19}{article}{
      author={Mecherbet, Amina},
       title={Sedimentation of particles in {S}tokes flow},
        date={2019},
        ISSN={1937-5093},
     journal={Kinet. Relat. Models},
      volume={12},
      number={5},
       pages={995\ndash 1044},
         url={https://doi.org/10.3934/krm.2019038},
      review={\MR{4027075}},
}

\bib{M21}{article}{
      author={Mecherbet, Amina},
       title={On the sedimentation of a droplet in {S}tokes flow},
        date={2021},
        ISSN={1539-6746},
     journal={Commun. Math. Sci.},
      volume={19},
      number={6},
       pages={1627\ndash 1654},
         url={https://doi.org/10.4310/CMS.2021.v19.n6.a8},
      review={\MR{4294564}},
}

\bib{M14}{article}{
      author={Miot, Evelyne},
       title={A uniqueness criterion for unbounded solutions to the
  {V}lasov-{P}oisson system},
        date={2016},
        ISSN={0010-3616},
     journal={Comm. Math. Phys.},
      volume={346},
      number={2},
       pages={469\ndash 482},
         url={https://doi.org/10.1007/s00220-016-2707-7},
      review={\MR{3535893}},
}

\bib{P98}{book}{
      author={Pachpatte, B.~G.},
       title={Inequalities for differential and integral equations},
      series={Mathematics in Science and Engineering},
   publisher={Academic Press, Inc., San Diego, CA},
        date={1998},
      volume={197},
        ISBN={0-12-543430-8},
      review={\MR{1487077}},
}

\bib{S15}{book}{
      author={Santambrogio, Filippo},
       title={Optimal transport for applied mathematicians},
      series={Progress in Nonlinear Differential Equations and their
  Applications},
   publisher={Birkh\"{a}user/Springer, Cham},
        date={2015},
      volume={87},
        ISBN={978-3-319-20827-5; 978-3-319-20828-2},
         url={https://doi.org/10.1007/978-3-319-20828-2},
        note={Calculus of variations, PDEs, and modeling},
      review={\MR{3409718}},
}

\bib{S70}{book}{
      author={Stein, Elias~M.},
       title={Singular integrals and differentiability properties of
  functions},
      series={Princeton Mathematical Series, No. 30},
   publisher={Princeton University Press, Princeton, N.J.},
        date={1970},
      review={\MR{0290095}},
}

\bib{Y63}{article}{
      author={Yudovich, V.~I.},
       title={Non-stationary flows of an ideal incompressible fluid},
        date={1963},
        ISSN={0044-4669},
     journal={\v{Z}. Vy\v{c}isl. Mat i Mat. Fiz.},
      volume={3},
       pages={1032\ndash 1066},
      review={\MR{158189}},
}

\bib{Y95}{article}{
      author={Yudovich, V.~I.},
       title={Uniqueness theorem for the basic nonstationary problem in the
  dynamics of an ideal incompressible fluid},
        date={1995},
        ISSN={1073-2780},
     journal={Math. Res. Lett.},
      volume={2},
      number={1},
       pages={27\ndash 38},
         url={https://doi.org/10.4310/MRL.1995.v2.n1.a4},
      review={\MR{1312975}},
}

\end{biblist}
\end{bibdiv}

\end{document}